\newtheorem{Defi}{Definition}
\newtheorem{Theo}{Theorem}
\newtheorem{Rem}{Remark}
\newtheorem{Lem}{Lemma}
\newtheorem{exam}{Example}
\def \b{\beta}
\def \a{\alpha}
\def \ilr {\mathcal{I}_{p,a,b}}
\def \dlr {\mathcal{D}_{p,a,b}}
\def \ilp {{_{a}}I_x}
\def \irp {{_{x}}I_{b}}
\def \dlxa {{_{a}}D_x}
\def \drxa {{_{x}}D_b}
\def \ilxa {{_{a}}I_x}
\def \irxa {{_{x}}I_b}
\def \dlx {{_{-1}}D_x}
\def \ilx {{_{-1}}I_x}
\def \irx {{_{x}}I_1}
\def \cdlxa  {{_{a}^{C}}D_x}
\def \cdrxa {{_{x}^{C}}D_b}
\def \cdlx  {{_{-1}^{C}}D_x}
\def \cdrx {{_{x}^{C}}D_1}
\def \ilxinf {{_{-\infty}}I_x}
\def \irxinf {{_{x}}I_\infty}
\def \ix {\mathcal{I}_{p}}
\def \dx {\mathcal{D}_{p}}
\def \dxc {{^C}\mathcal{D}_{p}}
\def \j {P}
\def\ps@pprintTitle{%
 \let\@oddhead\@empty
 \let\@evenhead\@empty
 \def\@oddfoot{\centerline{\thepage}}%
 \let\@evenfoot\@oddfoot}
\title{A spectral penalty method for two-sided fractional differential equations with general boundary conditions\footnotemark[1]}
\author{Nan Wang\footnotemark[2] \footnotemark[3] \footnotemark[4]
\and Zhiping Mao\footnotemark[3]
\and Chengming Huang\footnotemark[2] \footnotemark[4]
\and George Em Karniadakis\footnotemark[3]
}
\begin{document}

\graphicspath{{FDBCsmou/}{FDBCsmof/}{FNBCsmou/}{FNBCsmof/}{FNBCsmof/}{CapDBCsmou/}{CapDBCsmof/}{CapFNBCsmou/}{CapFNBCsmof/}{Timedifuss/}}
\markboth{}{}

\maketitle

\footnotetext[1]{This work was supported by the MURI/ARO on ``Fractional PDEs for Conservation Laws and Beyond: Theory, Numerics and Applications" (W911NF-15-1-0562) and  NSF of China (No. 11771163).}
\footnotetext[2]{School of Mathematics and Statistics, Huazhong University of Science and Technology, Wuhan 430074, China.}
\footnotetext[3]{Division of Applied Mathematics, Brown University, Providence, RI 02912, USA ({zhiping\_mao@brown.edu}, {nan\_wang@brown.edu}, {george\_karniadakis@brown.edu}).}
\footnotetext[4]{Hubei Key Laboratory of Engineering Modeling and Scientific Computing, Huazhong University of Science and Technology, Wuhan 430074, China ({chengming\_huang@hotmail.com}).}


\begin{abstract}
We consider spectral approximations to the conservative form of the two-sided Riemann-Liouville (R-L) and Caputo fractional differential equations (FDEs) with nonhomogeneous Dirichlet (fractional and classical, respectively) and Neumann (fractional) boundary conditions. In particular, we develop a spectral penalty method (SPM)  by using  the Jacobi poly-fractonomial approximation for the conservative R-L FDEs while using the polynomial approximation for the conservative Caputo FDEs. We establish the well-posedness of the corresponding weak problems and analyze sufficient conditions for the coercivity of the SPM for different types of fractional boundary value problems.
This analysis allows us to estimate the proper values of the penalty parameters at boundary points.
We present several numerical examples to verify the theory and demonstrate the high accuracy  of SPM, both for stationary and time dependent FDEs. Moreover, we compare the results against a Petrov-Galerkin spectral tau method (PGS-$\tau$, an extension of \cite{MaoKar18}) and demonstrate the superior accuracy of SPM for all cases considered.
\end{abstract}

\begin{keywords}
 Non-local boundary conditions, Jacobi poly-fractonomials, well-posedness, Petrov-Galerkin, coercivity
\end{keywords}

\begin{AMS}
  65N35, 65E05, 65M70,  41A05, 41A10, 41A25
\end{AMS}

\section{Introduction}\label{sec1}

Fractional differential equations (FDEs) have been used effectively to model complex physical processes governed by non-local interactions,
for example,  modeling contaminant transport in rivers \cite{deng2006parameter}, the spread of invasive species \cite{baeumer2007fractional} and the transport at the earth surface \cite{schumer2009fractional}.
In particular, the two-sided fractional diffusion equation is required in applications such as hydrology~\cite{benson2000application, chakraborty2009parameter, zhang2016bounded} and plasma turbulent transport~\cite{del-Castillo-Negrete2006}.
However, one of the open problems in applying FDEs to real-world applications is the proper specification and numerical implementation of boundary conditions (BCs), consistent with the type of fractional derivatives involved, i.e., of Riemann-Liouville (R-L) type or Caputo type ~\cite{podlubny1998fractional}.
The most popular BCs used are the classical (local) Dirichlet BCs, see \cite{metzler2000generalized, metzler2000random, benson2000application,MeerMM04,del-Castillo-Negrete2006} and references therein. However, the diffusion equation with Dirichlet BCs does not conserve mass~\cite{KellyPreprint}. The classical (local) Neumann BCs have also been employed by many researchers~\cite{Montefusco2013DCDS, Szekeres2015OM, Xie2017BVP}.
Due to the non-locality of the fractional operator, the local BCs may not be suitable depending on the type of fractional derivative, hence non-local/fractional BCs have been considered in some other works, for instance, see \cite{zhang2007impact, Lim2009PLB, Zhou2010SINUM, WangYangZhu2014SIAMNUM, WangYang2017FCAA, Ma.J2017JSC}. Moreover, by imposing the no-flux BCs, namely, homogeneous fractional Neumann boundary conditions, we can recover the mass conservation~\cite{Baeumer2018JCAM, Baeumer2018JDE, KellyPreprint}.
However, the numerical implementation of non-local BCs is not straightforward and requires special treatment in order to preserve the accuracy of the numerical method used, especially in high order methods such as spectral Galerkin methods.

In this work, we consider the following conservative two-sided FDEs with \emph{general BCs}:
\begin{equation}\label{Intro1.1}
  -\frac{d}{dx} \,\mathbb{D}_{x}^{\alpha-1} u(x)+c u(x)=f(x),\quad x\in \Lambda:= (-1,1),
\end{equation}
where  $1<\a<2$, $f(x)$ is a given function.
The function $\mathbb{D}_{x}^{\alpha-1} u(x)$ can be considered as a flux function of the fractional diffusion equation in the conservative form~\cite{paradisi2001fractional, schumer2001eulerian}
\begin{equation}\label{Twosidediffusion1}
  \frac{\partial}{\partial t}u(x,t)-\frac{\partial}{\partial x} \,\mathbb{D}_{x}^{\alpha-1} u(x,t)=0.
\end{equation}
For the operator $\mathbb{D}_{x}^{\alpha-1}$,
we consider two types of fractional derivative, namely, R-L and Caputo.
Consequently, we study the following two types of fractional problem with the consistent BCs:
\begin{itemize}
  \item Conservative R-L FDEs, i.e., $\mathbb{D}_{x}^{\alpha-1} = \dx^{\alpha-1}$,
  with the R-L \emph{fractional Dirichlet boundary conditions} (FDBCs)
  \begin{equation}
  \ix^{2-\a}u(-1)=g_1,\quad  \ix^{2-\a}u(1)=g_2 \label{IntroFDBC1.2}
\end{equation}
or the R-L \emph{fractional Neumann boundary conditions} (FNBCs)
\begin{equation}
 \dx^{\alpha-1} u(-1)=g_1,\quad \dx^{\alpha-1}u(1)=g_2, \label{IntroFNBC1.3}
\end{equation}
  \item Conservative Caputo FDEs, i.e., $\mathbb{D}_{x}^{\alpha-1} = \dxc^{\alpha-1}$,
  with the \emph{classical local Dirichlet BCs}
      \begin{equation}
  u(-1)=g_1,\quad u(1)=g_2,  \label{IntroDBC1.4}
\end{equation}
which recovers the case of homogeneous Dirichlet boundary problem considered in \cite{MaoKar18} if $g_1 = g_2 = 0$,
or the Caputo \emph{FNBCs}
\begin{equation}
 \dxc^{\alpha-1} u(-1)=g_1,\quad \dxc^{\alpha-1}u(1)=g_2. \label{IntroFNBC1.5}
\end{equation}
\end{itemize}
The definitions of the fractional operators $\ix^s, \dx^s, \dxc^s, s>0$ can be found in \eqref{defn:RL:short} and \eqref{defn:Ca:short}.
From physical point of view, the FNBCs (R-L or Caputo) is the reflecting (no-flux) BCs, and the homogeneous classical Dirichlet BCs is the absorbing BCs~\cite{Baeumer2018JCAM}. For the R-L problem with FDBCs, although the physical meaning may not be clear, it is mathematically interesting, see \cite{ChenShenWang2016MathComp} for the one-sided FDEs or \cite{MaoChenShen16} for the Riesz FDEs. Therefore, in the present work we consider and analyze all four types of the aforementioned BCs.

It is well known that it is difficult to obtain  analytic forms of the solutions of  FDEs, hence efficient numerical methods are required. For the one-dimensional two-sided FDEs, there are several available numerical methods, for example, the finite difference method~\cite{MeerMM04, DengweihuaMC15}, the finite element method~\cite{ErvinRoop06, Wanghong13}, and the spectral method~\cite{LiXu10, MohsenKar13, MaoShen16, MaoChenShen16,  ErvinRoop18, MaoKar18} and references therein.
In the early works, the emphasis was on obtaining high accuracy by ignoring the issue of low regularity of the solution of FDEs, i.e.,  assuming that  the solution is smooth, for example, \cite{ErvinRoop06, LiXu10, DengweihuaMC15, MaoShen16}.  However, solutions of  fractional boundary value problems have endpoints singularities that limit the convergence rate of numerical discretizations significantly.  In order to resolve this issue, special treatments are required.
Jin et al. proposed   finite element approximations by using a regularity reconstruction  ~\cite{J.Z.2015ESIAM} or  regularity pickup \cite{J.L.Z.2016SIAMNUM}  to improve the convergence rate for one-sided FDEs; Mao and Shen developed a spectral element method using a geometric mesh to obtain spectral convergence with respect to the square root of of the number of degrees of freedom~\cite{MaoShen17Adv}. The spectral method using Jacobi poly-fractonomials, also known as general Jacobi functions, as basis function was first proposed in the  work~\cite{MohsenKar13} and in subsequent work~\cite{MaoChenShen16, MaoKar18}  by matching the singularities at the endpoints. A tunable spectral collocation method was also developed in \cite{ZMK2017SIAMJSC}.
Hesthaven et al.\cite{xu2014stable} considered local Dirichlet and Neumann BCs, and
proposed a multi-domain spectral penalty method for one-sided FDEs in  non-conservative form with Caputo fractional derivatives, but it does not preserve the positivity of solutions~\cite{Baeumer2018JCAM}.

However, most available numerical methods assume local Dirichlet BCs. In the present work, we aim to use spectral approximation to solve the two-sided FDEs \eqref{Intro1.1} with general BCs \eqref{IntroFDBC1.2}-\eqref{IntroFNBC1.5}. Unlike the case of a simple model problem with homogeneous Dirichlet BCs in which we can analyze the singularities at the endpoints (see \cite{MaoKar18}), the endpoint singularities of the solutions of the two-sided FDEs with {general BCs} used in the present work are not known, especially for the case of non-homogeneous BCs. This means that there is no suitable basis function that can approximate the solution well as in the case of \cite{MaoKar18}.
In the present work, we develop a spectral penalty method (SPM) for FDEs with two-sided fractional R-L and Caputo derivatives.
In particular, we formulate SPM by using  the Jacobi poly-fractonomial approximation for the R-L FDEs while using the polynomial approximation for the Caputo FDEs.
The penalty method for spectral approximations  was first introduced by  Gottlieb and Funaro \cite{funaro1991convergence} for collocation, and subsequently, several works appeared  employing  the penalty method to solve general boundary value problems (BVPs) for integer-order (see \cite{hesthaven1996stable,hesthaven1997stable, hesthaven2000spectral, black1992polynomial}).


The remainder of this article is structured as follows. We recall some basic notations and properties for fractional calculus and Jacobi poly-fractonomials and develop the spectral relationship between the fractional operators and the Jacobi poly-fractonomials in Section \ref{sec:pre}.
We establish in Section \ref{sec:wellpose} the well-posedness  for the weak problem of the conservative R-L problem with the FDBCs/FNBCs.  We formulate SPM for the conservative R-L and Caputo FDEs in Section \ref{sec:spm:RL}, where the poly-fractonomial approach is used for the R-L problem while the polynomial approach is used for the Caputo problem. We also address the  question of  \emph{coercivity} of SPM and provide sufficient conditions for  different types of fractional problems. Moreover we present  estimates of the penalty parameters and associated functions.  In Section \ref{sec:num}, we present several numerical examples to illustrate the proposed methods, demonstrating  that SPM  can deliver superior  accuracy  compared with a Petrov-Galerkin spectral tau method (PGS-$\tau$), an extension of the high accuracy method of \cite{MaoKar18}. Furthermore, we verify numerically the theoretical estimates for the sufficient conditions for \emph{coercivity} as well as  the estimates for  the penalty parameters. Finally, we present an application to the time dependent  fractional diffusion equation in Section \ref{sec:application}.
We conclude in Section \ref{sec:conclusion}.

\section{Preliminaries}\label{sec:pre}
In this section, we recall the basics of fractional integrals and derivatives,  and review some  relevant properties of the Jacobi poly-fractonomials. In particular, we introduce the spectral relationships between fractional operators and Jacobi polynomials.
\subsection{Fractional integrals and derivatives}
We begin by presenting the definitions of fractional integrals and derivatives.
Consider a generic interval $(a,b)$, let $\Gamma(\cdot)$ be the  usual Gamma function.
\begin{Defi}
(Fractional integrals and derivatives). For $\sigma \in R^{+}$, the left and right fractional integrals are defined respectively as \cite{podlubny1998fractional}
   \begin{equation*}
     \begin{split}
      \ilxa^{\sigma}v(x)=\frac{1}{\Gamma(\sigma)}\int_{a}^{x}\frac{v(y)}{(x-y)^{1-\sigma}}dy,\quad
   \irx^{\sigma}v(x)=\frac{1}{\Gamma(\sigma)}\int_{x}^{b}\frac{v(y)}{(y-x)^{1-\sigma}}dy,~x\in (a,b).
      \end{split}
   \end{equation*}
Thus, for $\sigma\in[k-1,k),\, k\in \mathbb{N}$, we define the left and right R-L fractional derivatives as
\begin{equation*}
  \dlxa^{\sigma}v(x)=D^{k}{ \ilxa^{k-\sigma}v(x)},\quad \drxa^{\sigma}v(x)=(-1)^{k}D^{k}{ \irxa^{k-\sigma}v(x)}
\end{equation*}
and  the left and right Caputo fractional derivatives  as
\begin{equation*}
  \cdlxa^{\sigma}v(x)={ \ilxa^{k-\sigma}D^{k}v(x)},\quad  \cdrxa^{\sigma}v(x)=(-1)^{k}{ \irxa^{k-\sigma}D^{k}v(x)},
\end{equation*}
where $D^{k}:=d^{k}/dx^{k}$.
\end{Defi}
The fractional integral operators satisfy the following  semigroup property: for $\sigma, \, \rho\ge 0$,
\begin{equation}\label{eqn:semig}
      \ilx^{\sigma}\; \ilx^{\rho} v(x) = \ilx^{\sigma+\rho} v(x),
      \quad \irx^{\sigma}\; \irx^{\rho} v(x) = \irx^{\sigma+\rho} v(x),
\end{equation}
and the adjoint property: for $\sigma\ge 0$,
\begin{equation}\label{eqn:adj}
    (\ilx^{\sigma}u, v) = (u, \irx^{\sigma} v).
\end{equation}
By the above two properties, we can deduce the following fractional integration by parts:
\begin{equation}\label{eqn:fintpart}
    (\ilx^{2\sigma}u, v) = (\ilx^{\sigma}u, \irx^{\sigma} v) =  (u, \irx^{2\sigma} v),\quad \sigma \ge 0.
\end{equation}

\subsection{Jacobi poly-fractonomials}
Let $P_{n}^{\mu,\nu}(x), ~\mu,\nu>-1, \; n\in \mathbb{N}$ be the classical Jacobi polynomial.
We now review the definition of Jacobi poly-fractonomials (also called general Jacobi functions) $J_{n}^{-\mu,-\nu}$ and discuss the spectral relationship for the two-sided fractional operators. The Jacobi poly-fractonomials are defined as follows: for all $x\in \Lambda$,
\begin{equation*}
  J_{n}^{-\mu,-\nu}(x):=(1-x)^{\mu}(1+x)^{\nu}P_{n}^{\mu,\nu}(x), ~\mu,\nu>-1, \; n\in \mathbb{N}.
\end{equation*}
%
The Jacobi poly-fractonomials $J_{n}^{-\mu,-\nu}(x),n\ge 0$ are orthogonal with respect to the weight function $\omega^{-\mu,-\nu}(x)$:
\begin{equation}\label{PreJa2.9}
\int_{\Lambda} J_{n}^{-\mu,-\nu}(x)J_{m}^{-\mu,-\nu}(x)\omega^{-\mu,-\nu}(x)dx=\gamma_{n}^{\mu,\nu}\delta_{mn},
\end{equation}
where $\gamma_{n}^{\mu,\nu}$ is given by the equation (3.88) in \cite{shen2011spectral}.
%

Moreover,
for $1<\alpha<2$, $2-\alpha\leq\nu,\mu<0$, $\mu+\nu+2-\alpha=0$, $0 \leq p \leq 1$, and $x \in (a,b)$, let
\begin{equation}\label{PreJa2.10}
  C_{\alpha, p}:=C(\alpha,\mu,\nu)=(sin\pi\mu+sin\pi\nu)/{sin\pi\alpha},
\end{equation}
and denote
\begin{equation}\label{PreJa2.11}
 \ilr^{\mu,\nu,\varrho}:=C_{\alpha,p}(p\ilp^{\varrho}+(1-p)\irp^{\varrho}) \text{ and }
 \dlr^{\mu,\nu,q}:=\frac{d^k}{dx^k}\ilr^{\mu,\nu,k-q}
\end{equation}
the two-sided fractional integral of order $\varrho$
and the  two-sided fractional R-L derivatives of order $q \in (k-1,k),k \in \mathbb{N}$, respectively.
See \cite{MaoKar18} for more details.

\begin{Lem}\label{lem2.1}
For a given $p,\, 0<p<1$ and $1<\alpha<2$, if $\alpha-2 <  \mu, \nu < 0$ and  $\mu, \nu, p$ satisfying
 \begin{equation}\label{PreJa2.13}
   \mu+\nu=\alpha-2,\; psin(\pi\mu) = (1-p){sin(\pi\nu)},
 \end{equation}
 then for $t\in(0,1)$ and $k=0,1,2,\ldots, $ we have that
 \begin{equation}\label{PreJa2.14}
   \mathcal{I}_{p,0,1}^{\mu,\nu,2-\alpha}t^{\nu}(1-t)^{\mu}t^{k}=\sum_{j=0}^{k}a_{k,j}t^{j},
 \end{equation}
 where
 \begin{equation}\label{eqn:akj}
   a_{k,j}=(-1)^{k}\frac{(-1)^{j}\Gamma(j+\alpha-1)\Gamma(\mu+1)}{\Gamma(\alpha-1-\nu-k+j)\Gamma(j+1)\Gamma(k+1-j)}.
 \end{equation}
 \end{Lem}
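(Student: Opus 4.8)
The plan is to reduce the two-sided operator to Gauss hypergeometric functions and to show that the constraint \eqref{PreJa2.13} is exactly what forces every non-polynomial piece to cancel. Writing $t^{\nu}(1-t)^{\mu}t^{k}=t^{\nu+k}(1-t)^{\mu}$, I would first evaluate the two one-sided pieces separately, using the substitutions $y=xs$ (for the left integral) and $1-y=(1-x)s$ (for the right one) to turn the defining integrals into Euler integral representations of $\,{}_2F_1$; using $\mu+\nu=\alpha-2$ (so $\nu+2-\alpha=-\mu$ and $\mu+2-\alpha=-\nu$) this gives
\[
{}_0I_x^{2-\alpha}\big[t^{\nu+k}(1-t)^{\mu}\big]=\frac{\Gamma(\nu+k+1)}{\Gamma(k+1-\mu)}\,x^{k-\mu}\;{}_2F_1(-\mu,\nu+k+1;k+1-\mu;x),
\]
\[
{}_xI_1^{2-\alpha}\big[t^{\nu+k}(1-t)^{\mu}\big]=\frac{\Gamma(\mu+1)}{\Gamma(1-\nu)}\,(1-x)^{-\nu}\;{}_2F_1(-\nu-k,\mu+1;1-\nu;1-x),
\]
the hypotheses $\alpha-2<\mu,\nu<0$ guaranteeing the parameter inequalities the Euler representation requires.

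Since $k-\mu\notin\mathbb{Z}$, the left integral is purely an $x^{k-\mu}\times(\text{analytic at }0)$ term. To put the two pieces on a common footing, Euler's transformation $\,{}_2F_1(a,b;c;x)=(1-x)^{c-a-b}\,{}_2F_1(c-a,c-b;c;x)$ rewrites $\,{}_2F_1(-\mu,\nu+k+1;k+1-\mu;x)$ as $(1-x)^{-\nu}\,{}_2F_1(k+1,2-\alpha;k+1-\mu;x)$, while the standard $z\mapsto1-z$ connection formula (non-logarithmic, since $k-\mu\notin\mathbb{Z}$) splits the right integral as $R_{\mathrm{an}}(x)+R_{\mathrm{sing}}(x)$, where $R_{\mathrm{an}}$ is analytic at $x=0$ and $R_{\mathrm{sing}}(x)=\frac{\Gamma(\mu-k)}{\Gamma(-\nu-k)}\,x^{k-\mu}(1-x)^{-\nu}\,{}_2F_1(k+1,2-\alpha;k+1-\mu;x)$ — the \emph{same} hypergeometric. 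Consequently $\mathcal I_{p,0,1}^{\mu,\nu,2-\alpha}[t^{\nu+k}(1-t)^{\mu}]$ equals $C_{\alpha,p}(1-p)R_{\mathrm{an}}(x)$ plus a multiple of $x^{k-\mu}(1-x)^{-\nu}\,{}_2F_1(k+1,2-\alpha;k+1-\mu;x)$ whose coefficient is $C_{\alpha,p}\big(p\,\Gamma(\nu+k+1)/\Gamma(k+1-\mu)+(1-p)\,\Gamma(\mu-k)/\Gamma(-\nu-k)\big)$. This is the crux: by the reflection formula, $\Gamma(\mu-k)/\Gamma(-\nu-k)=-(\sin\pi\nu/\sin\pi\mu)\,\Gamma(\nu+k+1)/\Gamma(k+1-\mu)$, so that coefficient is a nonzero multiple of $p\sin\pi\mu-(1-p)\sin\pi\nu$, which vanishes precisely by \eqref{PreJa2.13}; hence only $C_{\alpha,p}(1-p)R_{\mathrm{an}}(x)$ survives. (By the $t\mapsto1-t$ symmetry the analogous branch term $\propto(1-x)^{-\nu}$ near $x=1$ is governed by the same condition, and the surviving piece will be seen below to be a genuine polynomial.)

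It then remains to identify $R_{\mathrm{an}}$. One further application of Euler's transformation to its hypergeometric cancels the $(1-x)^{-\nu}$ factor and produces the \emph{terminating} series $\,{}_2F_1(\alpha-1,-k;\mu+1-k;x)$ — a polynomial of degree $k$, since $-k$ is a nonpositive integer — so that
\[
\mathcal I_{p,0,1}^{\mu,\nu,2-\alpha}\big[t^{\nu+k}(1-t)^{\mu}\big]=C_{\alpha,p}(1-p)\,\frac{\Gamma(\mu+1)\Gamma(k-\mu)}{\Gamma(k+1)\Gamma(2-\alpha)}\;{}_2F_1(\alpha-1,-k;\mu+1-k;x).
\]
Finally, \eqref{PreJa2.13} gives $C_{\alpha,p}(1-p)=\sin\pi\mu/\sin\pi\alpha$, and feeding in the reflection identities for $\Gamma(k-\mu)$ and $\Gamma(2-\alpha)$ collapses the constant to $(-1)^{k}\Gamma(\mu+1)\Gamma(\alpha-1)/\big(\Gamma(k+1)\Gamma(\mu+1-k)\big)$; expanding $\,{}_2F_1(\alpha-1,-k;\mu+1-k;x)=\sum_{j=0}^{k}\frac{(\alpha-1)_j(-k)_j}{(\mu+1-k)_j\,j!}\,x^{j}$ and rewriting the Pochhammer symbols as ratios of Gamma functions reproduces $a_{k,j}$ as in \eqref{eqn:akj}, after using $\alpha-1-\nu=\mu+1$.

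The only genuine obstacle is the cancellation step: one must recognize that each of the two one-sided fractional integrals carries a branch term ($\propto x^{k-\mu}$ near $0$ and, symmetrically, $\propto(1-x)^{-\nu}$ near $1$) and that the balance condition $p\sin\pi\mu=(1-p)\sin\pi\nu$ is exactly what annihilates it. The remainder — the two $\,{}_2F_1$ closed forms, two uses of Euler's transformation, and the $\Gamma$-function reflection bookkeeping for the constant — is mechanical.
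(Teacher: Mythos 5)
Your proposal is correct and takes essentially the same route as the paper: both reduce the two one-sided integrals to ${}_2F_1$ representations that share a common $x^{k-\mu}$ branch term plus a terminating (degree-$k$) hypergeometric, cancel the branch term via the reflection formula combined with $p\sin(\pi\mu)=(1-p)\sin(\pi\nu)$, and recover the coefficients $a_{k,j}$ by the same Gamma-function bookkeeping (using $\alpha-1-\nu=\mu+1$). The only difference is that you derive the hypergeometric identities and the $z\mapsto 1-z$ splitting yourself (Euler integral representation, Euler transformation, connection formula), whereas the paper imports these formulas from Lemma 5.1 of Ervin--Heuer--Roop.
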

\begin{proof}
For $0<p<1$ and $g(t)=t^{\nu}(1-t)^{\mu}t^{k}$,  using the same argument as for the Lemma $5.1$ of  \cite{ErvinRoop18}, the following two equations hold:
\begin{equation*}
{}_{0}I_{t}^{2-\alpha}g(t)=\frac{\Gamma(1+\nu+k)}{\Gamma(3-\alpha+\nu+k)}t^{2-\alpha+\nu+k}{}_{2}F_{1}(1+\nu+k,-\mu; 3-\alpha+\nu+k, t),
\end{equation*}
\begin{equation*}
\begin{split}
  {}_{t}I_{1}^{2-\alpha}g(t)=\frac{\Gamma(-2+\alpha-\nu-k)}{\Gamma(-\nu-k)}t^{2-\alpha+\nu+k}{}_{2}F_{1}(1+\nu+k,-\mu; 3-\alpha+\nu+k, t)\\
  +\frac{\Gamma(\mu+1)\Gamma(2-\alpha+\nu+k)}{\Gamma(2-\alpha)\Gamma(3-\alpha+\nu+\mu+k)}{}_{2}F_{1}(-k,\alpha-1; -\nu-1+\alpha-k, t),
\end{split}
\end{equation*}
where
$ _{2}F_{1}(a_{1},b_{1};c_{1};x)=\sum_{j=0}^{\infty}\frac{(a_{1})_{j}(b_{1})_j}{(c_{1})_j}\frac{x^{j}}{j!} ~(|x|<1,~ a_{1}, b_{1}, c_{1}\in R,~ -c_{1} \notin N) $
is a hypergeometric function, and the rising  factorial in the Pochhammer symbol for $a_{1}\in R$ and~ $j \in N$ is defined by
\begin{equation*}
(a_{1})_{0}=1; ~(a_{1})_{j}:=a_{1}(a_{1}+1)\cdots(a_{1}+j-1)=\frac{\Gamma(a_{1}+j)}{\Gamma(a_{1})}, ~for ~j\geq 1.
\end{equation*}
Using the formula
  $\Gamma(1-z)\Gamma(z)=\frac{\pi}{sin{(\pi z)}}$
 gives
\begin{equation*}
 \frac{\Gamma(-2+\alpha-\nu-k)}{\Gamma(-\nu-k)}=-\frac{sin(\pi\nu)}{sin(\pi \mu)}\frac{\Gamma(1+\nu+k)}{\Gamma(3-\alpha+\nu+k)}.
\end{equation*}
By letting $p$ given by \eqref{PreJa2.13} and noting that $\Gamma(2-\alpha)\Gamma(\alpha-1)=-\frac{\pi}{sin(\pi \alpha)}$ and $\Gamma(2-\alpha+\nu+k)\Gamma(-1-\alpha-\nu-k)=(-1)^{k+1}\frac{\pi}{sin(\pi \mu)}$, we obtain the  equality \eqref{PreJa2.14}.
%
\end{proof}

Lemma \ref{lem2.1} implies that the two-sided fractional integral of Jacobi poly-fractonomials return polynomials; this is also true for the one-sided and Riesz fractional integrals, which are subcases of the general case~\cite{MohsenKar13, MaoChenShen16}.
\begin{Theo}\label{theo2.1}
For a given $p,\, 0<p<1$ and $1<\alpha<2$, if $\alpha-2 <\mu, \nu < 0$ and  $\mu, \nu, p$ satisfying
 the condition \eqref{PreJa2.13}, then for
$n=0,1,2,\ldots,$ it holds that
 \begin{equation}\label{PreJa2.16}
 \mathcal{ I}_{p,0,1}^{\mu,\nu,2-\alpha}\omega_{*}^{\mu,\nu}G_{n}(\mu,\nu,t)=\lambda_{n}G_{n}(\nu,\mu,t), \text{ where }
 \lambda_{n}=\frac{\Gamma(n+\alpha-1)}{\Gamma(n+1)},
 \end{equation}
where
$G_n(\mu,\nu, t)= P_n^{\mu,\nu}(x(t))$ is defined on interval $(0, 1)$ and  $x(t)=2t-1,\; \omega_{*}^{\mu,\nu}(t)=t^{\nu}(1-t)^{\mu}$.
\end{Theo}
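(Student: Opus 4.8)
The plan is to reduce the claimed spectral relation for $P_n^{\mu,\nu}$ to the monomial identity already established in Lemma \ref{lem2.1}. The key observation is that $G_n(\mu,\nu,t) = P_n^{\mu,\nu}(2t-1)$ is a polynomial of degree exactly $n$ in $t$, so we may expand it as $G_n(\mu,\nu,t) = \sum_{k=0}^{n} c_k^{(n)} t^k$ for suitable coefficients $c_k^{(n)}$ depending on $\mu,\nu,n$. Then $\omega_*^{\mu,\nu}(t) G_n(\mu,\nu,t) = \sum_{k=0}^n c_k^{(n)}\, t^\nu(1-t)^\mu t^k$, and by linearity of $\mathcal{I}_{p,0,1}^{\mu,\nu,2-\alpha}$ together with \eqref{PreJa2.14}, we get
\begin{equation*}
  \mathcal{I}_{p,0,1}^{\mu,\nu,2-\alpha}\,\omega_*^{\mu,\nu}G_n(\mu,\nu,t) = \sum_{k=0}^n c_k^{(n)}\sum_{j=0}^k a_{k,j}\, t^j,
\end{equation*}
which is again a polynomial of degree at most $n$ in $t$. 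So the left-hand side is automatically a polynomial; what remains is to identify it as $\lambda_n$ times $G_n(\nu,\mu,t)$, i.e. as $\lambda_n P_n^{\nu,\mu}(2t-1)$ with the parameters $\mu$ and $\nu$ swapped.

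First I would set up the argument that pins down the polynomial on the right-hand side. One clean route is an orthogonality/self-adjointness argument: using the fractional integration-by-parts relation \eqref{eqn:fintpart} (and the adjoint property \eqref{eqn:adj}), pair $\mathcal{I}_{p,0,1}^{\mu,\nu,2-\alpha}\omega_*^{\mu,\nu}G_n$ against $\omega_*^{\nu,\mu}G_m$ in the $L^2$ inner product on $(0,1)$; the two-sided operator $\mathcal{I}_{p,0,1}^{\mu,\nu,2-\alpha}$ with the balance condition \eqref{PreJa2.13} should be (formally) symmetric in a way that moves it onto $G_m$ and produces, up to the weight, an expression orthogonal to all polynomials of degree $< n$, forcing the result to be a constant multiple of $G_n(\nu,\mu,t)$ by the orthogonality \eqref{PreJa2.9} of Jacobi poly-fractonomials. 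Alternatively — and this is probably the more direct and more self-contained route given what the excerpt makes available — I would avoid inner products entirely and just match leading coefficients: since both sides are degree-$n$ polynomials in $t$, it suffices to check that the coefficient of $t^n$ on the left equals $\lambda_n$ times the coefficient of $t^n$ in $P_n^{\nu,\mu}(2t-1)$, and that the lower-order coefficients agree. The top coefficient on the left comes only from the $k=n$ term: it is $c_n^{(n)} a_{n,n}$, and from \eqref{eqn:akj},
\begin{equation*}
  a_{n,n} = (-1)^n\,\frac{(-1)^n\,\Gamma(n+\alpha-1)\,\Gamma(\mu+1)}{\Gamma(\alpha-1-\nu)\,\Gamma(n+1)} = \frac{\Gamma(n+\alpha-1)\,\Gamma(\mu+1)}{\Gamma(\alpha-1-\nu)\,\Gamma(n+1)};
\end{equation*}
recalling $\mu+\nu = \alpha-2$ so that $\alpha-1-\nu = \mu+1$, this collapses to $a_{n,n} = \Gamma(n+\alpha-1)/\Gamma(n+1) = \lambda_n$. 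Hence the $t^n$-coefficient on the left is $\lambda_n c_n^{(n)}$, and since the top coefficients of $P_n^{\mu,\nu}$ and $P_n^{\nu,\mu}$ coincide (the leading Jacobi coefficient $\binom{2n+\mu+\nu}{n}/2^n$ is symmetric under $\mu\leftrightarrow\nu$), we get exactly $\lambda_n$ times the top coefficient of $G_n(\nu,\mu,t)$.

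To finish, I would argue that the lower-order coefficients are then forced. The cleanest way is to invoke the operator's mapping properties once more: $\mathcal{I}_{p,0,1}^{\mu,\nu,2-\alpha}$ maps the $(n+1)$-dimensional space $\mathrm{span}\{\omega_*^{\mu,\nu}G_k : 0\le k\le n\}$ into $\mathbb{P}_n$, and the triangular structure of \eqref{PreJa2.14}--\eqref{eqn:akj} (the image of $\omega_*^{\mu,\nu}t^k$ has degree exactly $k$, with nonzero leading coefficient $a_{k,k}=\lambda_k\neq 0$) shows this map is a bijection. So it suffices to verify \eqref{PreJa2.16} for the basis obtained from a triangular decomposition, which reduces — by the leading-coefficient computation above repeated at each level, together with the orthogonality of $G_n(\nu,\mu,\cdot)$ to lower-degree polynomials under weight $\omega_*^{\nu,\mu}$ — to checking a single identity per $n$. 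I expect the main obstacle to be the bookkeeping that confirms the result lands on the \emph{reflected} family $G_n(\nu,\mu,t)$ rather than $G_n(\mu,\nu,t)$: this symmetry swap is exactly what the balance condition \eqref{PreJa2.13} on $p$ is engineered to produce (it is the two-sided analogue of the classical one-sided identity $\ilxa^{\sigma}$ sending $(1+x)$-weighted Jacobi to $(1-x)$-weighted Jacobi), and making that precise requires carefully tracking how the $\Gamma$-ratios in \eqref{eqn:akj} recombine under $\mu+\nu=\alpha-2$. Once the leading-coefficient match and the triangularity/bijectivity are in hand, \eqref{PreJa2.16} follows.
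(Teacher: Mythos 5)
Your evaluation of the constant is right and matches the paper: $a_{n,n}=\Gamma(n+\alpha-1)\Gamma(\mu+1)/\bigl(\Gamma(\alpha-1-\nu)\Gamma(n+1)\bigr)=\lambda_n$ because $\mu+\nu=\alpha-2$ gives $\alpha-1-\nu=\mu+1$, and the leading Jacobi coefficient is symmetric under $\mu\leftrightarrow\nu$. The problem is the step you favor for everything else. Matching the $t^n$ coefficient cannot identify the polynomial, and the triangularity/bijectivity of $\mathcal{I}_{p,0,1}^{\mu,\nu,2-\alpha}$ on $\mathrm{span}\{\omega_*^{\mu,\nu}t^k\}$ is irrelevant here: bijectivity tells you images are distinct, not that the image of $\omega_*^{\mu,\nu}G_n(\mu,\nu,\cdot)$ is proportional to $G_n(\nu,\mu,\cdot)$ --- that is exactly the statement to be proved. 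Appealing to ``the orthogonality of $G_n(\nu,\mu,\cdot)$ to lower-degree polynomials'' does not close it either, because what is needed is that the \emph{left-hand side} is orthogonal to $\mathbb{P}_{n-1}$ in the $\omega_*^{\nu,\mu}$-weighted inner product; that is a property of the image, not of the target polynomial. So your preferred ``coefficient-matching'' route has a genuine gap, and your first route (the orthogonality/adjoint argument, which is the paper's proof, following Lemma 5.2 of \cite{ErvinRoop18}) is left precisely at its decisive point (``should be formally symmetric'').

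The missing piece is concrete: by the adjoint property \eqref{eqn:adj} on $(0,1)$, the $L^2$ adjoint of $p\,{}_{0}I_{t}^{2-\alpha}+(1-p)\,{}_{t}I_{1}^{2-\alpha}$ is $p\,{}_{t}I_{1}^{2-\alpha}+(1-p)\,{}_{0}I_{t}^{2-\alpha}$, i.e. the same two-sided integral with $(\mu,\nu,p)$ replaced by $(\nu,\mu,1-p)$, and the balance condition \eqref{PreJa2.13} is invariant under this swap. Hence Lemma \ref{lem2.1} applies to the adjoint as well and sends $\omega_*^{\nu,\mu}h$, $h\in\mathbb{P}_{n-1}$, to a polynomial of degree at most $n-1$, which is annihilated by $G_n(\mu,\nu,\cdot)$ under the weight $\omega_*^{\mu,\nu}$ by classical Jacobi orthogonality. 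This yields
\begin{equation*}
\bigl(\mathcal{I}_{p,0,1}^{\mu,\nu,2-\alpha}\omega_*^{\mu,\nu}G_n(\mu,\nu,\cdot),\,h\bigr)_{\omega_*^{\nu,\mu}}=0
\qquad \forall\, h\in\mathbb{P}_{n-1},
\end{equation*}
so the left side of \eqref{PreJa2.16}, already known to lie in $\mathbb{P}_n$ by Lemma \ref{lem2.1}, must equal $C\,G_n(\nu,\mu,t)$; your $a_{n,n}$ computation then gives $C=\lambda_n$. Without this adjoint-plus-swapped-parameters step, neither of the routes you describe actually closes.
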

\begin{proof}
The proof is similar to the one of Lemma 5.2 in  \cite{ErvinRoop18}. 
Let $\mathbb{P}_N$ be the space of polynomials of degree at most $N$, for $h(x)\in \mathbb{P}_{n-1}$, $(G_{n}(\nu,\mu,t),h)_{\omega_{*}^{\nu,\mu}}=0$,
$(I_{p,0,1}^{\mu,\nu,2-\alpha}\omega_{*}^{\mu,\nu}G_{n}(\mu,\nu,t), h)_{\omega_{*}^{\nu,\mu}}=0$, hence, $$I_{p,0,1}^{\mu,\nu,2-\alpha}\omega_{*}^{\mu,\nu}G_{n}(\mu,\nu,t) =CG_{n}(\nu,\mu,t)$$
with $C$ is a constant.
Since the coefficient of $x^{n}$ in $G_{n}(\mu,\nu,t)$ and $G_{n}(\nu,\mu,t)$ is $\frac{\Gamma(2n+\mu+\nu+1)}{n!\Gamma(n+\mu+\nu+1)}$, then from $a_{n,n}$, we get
\begin{equation*}
 \frac{\Gamma(n+\alpha-1)}{\Gamma(n+1)}=  \lambda_{n}.
\end{equation*}
\end{proof}

Due to Theorem \ref{theo2.1} and Lemma \ref{lem2.1}, we can get the following results:
\begin{Theo}\label{theo2.2}
For a given $p,\, 0\le p\le 1$ and $1<\alpha<2$, if $\alpha-2 \leq  \mu, \nu \le 0$ and  $\mu, \nu, p$ satisfying
the condition \eqref{PreJa2.13}, then
 for $x \in (-1,1)$, $n=0,1,2,\ldots,$ it holds that
 \begin{equation}\label{PreJa2.18}
  \mathcal{I}_{p,-1,1}^{\mu,\nu,2-\alpha}J_{n}^{-\mu,-\nu}(x)=\lambda_{n}P_{n}^{\nu,\mu}(x), ~\text{where} \quad
  \lambda_{n}=\frac{\Gamma(n+\alpha-1)}{\Gamma(n+1)},
 \end{equation}
 and
 \begin{equation}\label{PreJa2.19}
  \mathcal{D}_{p,-1,1}^{\mu,\nu,k+\alpha-2}J_{n}^{-\mu,-\nu}(x)=\tilde{C}_{\a}P_{n-k}^{\nu+k,\mu+k}(x),~ \text{where} \quad
     \tilde{C}_{\a}=\frac{\Gamma(n+k+\alpha-1)}{2^k\Gamma(n+1)}.
\end{equation}
\end{Theo}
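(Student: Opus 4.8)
The plan is to transfer the identities of Theorem \ref{theo2.1}, which are stated on the reference interval $(0,1)$ via the substitution $x=2t-1$, to the symmetric interval $(-1,1)$, and then to handle the limiting values $p\in\{0,1\}$ and $\mu,\nu\in\{0,\alpha-2\}$ by a separate (easy) argument. First I would establish the change-of-variables dictionary: under $t\mapsto x=2t-1$, the weight transforms as $\omega_*^{\mu,\nu}(t)=t^\nu(1-t)^\mu = 2^{-\mu-\nu}(1+x)^\nu(1-x)^\mu$, so that $\omega_*^{\mu,\nu}(t)G_n(\mu,\nu,t) = 2^{-\mu-\nu}J_n^{-\mu,-\nu}(x)$, recalling $J_n^{-\mu,-\nu}(x)=(1-x)^\mu(1+x)^\nu P_n^{\mu,\nu}(x)$ and $G_n(\mu,\nu,t)=P_n^{\mu,\nu}(x(t))$. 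Likewise the one-sided integral operators scale: ${}_0I_t^{2-\alpha}$ on $(0,1)$ corresponds to $2^{-(2-\alpha)}\,{}_{-1}I_x^{2-\alpha}$ on $(-1,1)$ (each length element $dy$ and each kernel factor $(x-y)^{2-\alpha-1}$ picks up a factor of the Jacobian $2$), and similarly for the right integral; hence $\mathcal{I}_{p,0,1}^{\mu,\nu,2-\alpha}$ on $(0,1)$ corresponds to $2^{-(2-\alpha)}\mathcal{I}_{p,-1,1}^{\mu,\nu,2-\alpha}$ on $(-1,1)$. Since $\mu+\nu=\alpha-2$ by \eqref{PreJa2.13}, we have $2^{-\mu-\nu}=2^{\alpha-2}=2^{-(2-\alpha)}$ wait — rather $2^{-\mu-\nu}=2^{2-\alpha}$, so the two scaling factors are reciprocal and cancel exactly, which is why \eqref{PreJa2.18} has the same constant $\lambda_n=\Gamma(n+\alpha-1)/\Gamma(n+1)$ as \eqref{PreJa2.16}. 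Putting these together, applying $\mathcal{I}_{p,-1,1}^{\mu,\nu,2-\alpha}$ to $J_n^{-\mu,-\nu}(x)$ reduces directly to \eqref{PreJa2.16}, and since $G_n(\nu,\mu,t)=P_n^{\nu,\mu}(x)$ we obtain \eqref{PreJa2.18}.

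For \eqref{PreJa2.19}, I would use the definition $\mathcal{D}_{p,-1,1}^{\mu,\nu,k+\alpha-2}=\dfrac{d^k}{dx^k}\,\mathcal{I}_{p,-1,1}^{\mu,\nu,2-\alpha}$ from \eqref{PreJa2.11} (taking the order $q=k+\alpha-2\in(k-1,k)$, so that $k-q=2-\alpha$). Applying \eqref{PreJa2.18} first gives $\mathcal{D}_{p,-1,1}^{\mu,\nu,k+\alpha-2}J_n^{-\mu,-\nu}(x)=\lambda_n\,\dfrac{d^k}{dx^k}P_n^{\nu,\mu}(x)$, and then I invoke the standard derivative formula for Jacobi polynomials,
\begin{equation*}
\frac{d^k}{dx^k}P_n^{\nu,\mu}(x)=\frac{\Gamma(n+\nu+\mu+1+k)}{2^k\,\Gamma(n+\nu+\mu+1)}\,P_{n-k}^{\nu+k,\mu+k}(x),
\end{equation*}
(for $n\ge k$; the expression vanishes otherwise). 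Since $\nu+\mu=\alpha-2$, the prefactor becomes $\dfrac{\Gamma(n+\alpha-1+k)}{2^k\,\Gamma(n+\alpha-1)}$, and multiplying by $\lambda_n=\dfrac{\Gamma(n+\alpha-1)}{\Gamma(n+1)}$ collapses to $\dfrac{\Gamma(n+k+\alpha-1)}{2^k\,\Gamma(n+1)}=\tilde C_\alpha$, which is exactly \eqref{PreJa2.19}.

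Finally, to close the gap between the open hypotheses of Theorem \ref{theo2.1} ($0<p<1$, $\alpha-2<\mu,\nu<0$) and the closed ones claimed here ($0\le p\le 1$, $\alpha-2\le\mu,\nu\le0$), I would argue by continuity: both sides of \eqref{PreJa2.18}–\eqref{PreJa2.19} depend continuously (indeed analytically) on $(\mu,\nu,p)$ along the one-parameter family cut out by \eqref{PreJa2.13}, and the endpoint cases $\mu=0$ (so $\nu=\alpha-2$, $p=0$ forced by the second relation in \eqref{PreJa2.13} since $\sin(\pi\mu)=0$) and $\nu=0$ ($\mu=\alpha-2$, $p=1$) correspond to the purely one-sided fractional integrals, for which the identity is the classical poly-fractonomial spectral relation of \cite{MohsenKar13}; alternatively one checks these endpoint identities directly from the same hypergeometric computation used in Lemma \ref{lem2.1}, noting that one of the two ${}_2F_1$ terms drops out. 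The main obstacle I anticipate is bookkeeping the constants through the $t\leftrightarrow x$ rescaling — in particular making sure the factor $2^{2-\alpha}$ from the kernel and the factor $2^{\mu+\nu}$ from the weight cancel as claimed — and being careful that the second relation in \eqref{PreJa2.13} does force $p\to0$ or $p\to1$ at the two weight endpoints so that the closed-hypothesis statement is genuinely the continuous closure of the open one.
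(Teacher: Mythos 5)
Your proposal follows essentially the same route as the paper: for $0<p<1$ it transfers \eqref{PreJa2.16} to $(-1,1)$ via $x=2t-1$ (your explicit check that the kernel factor $2^{-(2-\alpha)}$ cancels the weight factor $2^{-\mu-\nu}=2^{2-\alpha}$ is exactly what the paper leaves implicit), it derives \eqref{PreJa2.19} from \eqref{PreJa2.11}, \eqref{PreJa2.18} and the Jacobi derivative formula (the paper's citation of equation (3.101) in \cite{shen2011spectral}), and it handles $p=0,1$ through the classical one-sided spectral relations, where the paper instead cites \cite{ChenShenWang2016MathComp}. The only slip is your endpoint pairing: since \eqref{PreJa2.13} reads $p\sin(\pi\mu)=(1-p)\sin(\pi\nu)$, the case $\mu=0,\ \nu=\alpha-2$ forces $p=1$ (purely left-sided integral) and $\nu=0,\ \mu=\alpha-2$ forces $p=0$, i.e., the reverse of what you wrote; this does not affect the substance of the argument (and the paper's own wording of these two endpoint cases is itself garbled by a typo).
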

\begin{proof}
For $0<p<1$,
by the transformation $x:=2t-1$ in equation \eqref{PreJa2.16}, we deduce \eqref{PreJa2.18} by
\begin{equation*}
\begin{aligned}
 I_{p,-1,1}^{\mu,\nu,2-\alpha}J_{n}^{-\mu,-\nu}(x) &=I_{p,-1,1}^{\mu,\nu,2-\alpha}\omega^{\mu,\nu}P_{n}^{\mu,\nu}(x)
 =I_{p,0,1}^{\mu,\nu,2-\alpha}\omega_{*}^{\mu,\nu}G_{n}(\mu,\nu,t)\\
 &=\lambda_{n}G_{n}(\nu,\mu,t)
 =\lambda_{n}P_{n}^{\nu,\mu}(x).
\end{aligned}
\end{equation*}
For $p = 1$, we set $\mu = \alpha-2,\, \nu =0$ while for $p = 1$ we set $\mu =0 ,\, \nu =\alpha-2$. Obviously, $\mu,\nu$ satisfy \eqref{PreJa2.13}, then we can derive \eqref{PreJa2.18} for $p = 0,1$ by using \cite[Equations (2.34) and (2.35)]{ChenShenWang2016MathComp}.
Equation \eqref{PreJa2.19} can be deduced from \eqref{PreJa2.11}, \eqref{PreJa2.18} and from equation (3.101) in \cite{shen2011spectral}.
\end{proof}

For the sake of simplicity, we denote
\begin{equation}\label{defn:RL:short}
    \ix^{\varrho}:=\mathcal{I}_{p, a, b}^{\mu,\nu,\varrho},\text{ and }\dx^{\varrho}:=\mathcal{D}_{p, a, b}^{\mu,\nu,\varrho}
\end{equation}
be the R-L two-sided fractional integral and derivative if no confusion arises.
Similarly, we can define the two-sided Caputo fractional derivative
\begin{equation}\label{defn:Ca:short}
    \dxc^{\varrho}:=C_{\a,p}(p\,\cdlx^{\varrho} - (1-p)\,\cdrx^{\varrho}).
\end{equation}

\section{Well-posedness}\label{sec:wellpose}
Before establishing the discretization scheme for the fractional problems \eqref{Intro1.1} with the general BCs \eqref{IntroFDBC1.2}-\eqref{IntroFNBC1.5},
we first show the well-posedness of the continuous weak problem.
For the case of conservative Caputo problem, the well-posedness results have been shown in \cite{Ma.J2017JSC}. Also, for the one-sided model problem without the reaction term, i.e.,  $c \equiv 0$, Wang and his collaborators  showed the well-posedness of the fractional Dirichlet boundary problem in the conservative Caputo sense (cf. \cite{WangYangZhu2014SIAMNUM}) and the fractional Neumann boundary problem in both conservative Caputo and R-L sense (cf. \cite{WangYang2017FCAA}).
We discuss in this section the well-posedness of the case of two-sided conservative R-L fractional problem.
\subsection{Fractional integral/derivative spaces}

For $\sigma\ge 0$, define the fractional integral spaces associated with the left and right fractional integrals:
\begin{equation*}
    J_{l}^{-\sigma}(\Lambda) := \left\{ v: \ilx^{\sigma}v \in L^2(\Lambda)\right\} \text{ and } J_{r}^{-\sigma}(\Lambda) := \left\{ v: \irx^{\sigma}v \in L^2(\Lambda)\right\}
\end{equation*}
%
with norms
\begin{equation}\label{defn:fintspl:norm}
\begin{aligned}
    &\|v\|^2_{J_{l}^{-\sigma}(\Lambda)} := (\ilx^{\sigma}v, \ilx^{\sigma}v) = \|\ilx^{\sigma}v\|^2_{L^2(\Lambda)}  \text{ and } \\
    & \|v\|^2_{J_{r}^{-\sigma}(\Lambda)} := (\irx^{\sigma}v, \irx^{\sigma}v) = \|\irx^{\sigma}v\|^2_{L^2(\Lambda)},
\end{aligned}
\end{equation}
%
respectively.
Moreover, we define the following fractional integral space and norm associated with the fractional integral $\ix^{\sigma}$ for $\sigma\ge 0$:
\begin{equation}\label{defn:fintsps}
    J_{p}^{-\sigma}(\Lambda) := \left\{ v: \ix^{\sigma}v \in L^2(\Lambda) \right\}
\; \text{with }
    \|v\|^2_{J_{p}^{-\sigma}(\Lambda)} := (\ix^{\sigma}v, \ix^{\sigma}v) = \|\ix^{\sigma}v\|^2_{L^2(\Lambda)}.
\end{equation}
When $p = 1$ (resp. $p=0$), the fractional integral space $J_{p}^{-\sigma}(\Lambda)$ reduces to the space $J_{l}^{-\sigma}(\Lambda)$ (resp. $J_{r}^{-\sigma}(\Lambda)$).


We now show that $J_{p}^{-\sigma}(\Lambda),\, \sigma \ge 0$ is a Hilbert space. To do this, we only need to verify that $\|v\|_{J_{p}^{-\sigma}(\Lambda)} = 0$ if and only if $v(x) =  0$. On the one hand, if $v(x) = 0$, obviously, $\|v\|_{J_{p}^{-\sigma}(\Lambda)} = 0$. On the other hand,  by the spectral relation \eqref{PreJa2.18}, we have that there exists a unique pair of $\mu,\nu$ satisfying \eqref{PreJa2.13} (where $2-\alpha$ replaced by $\sigma$), such that $v \in L_{\omega^{-\mu,-\nu}}^2(\Lambda)$ and $\|v\|_{L_{\omega^{-\mu,-\nu}}^2(\Lambda)} \propto \|\ix^{\sigma}v\|_{L_{\omega^{\nu,\mu}}^2(\Lambda)}$. Thus, $\|v\|_{J_{p}^{-\sigma}(\Lambda)} = 0$ gives $\ix^{\sigma}v(x) = 0$ and then $\|v\|_{L_{\omega^{-\mu,-\nu}}^2(\Lambda)} = 0$. Then $v(x) = 0$ follows the completeness of the space $L_{\omega^{-\mu,-\nu}}^2(\Lambda)$ (see the argument in \cite[Section 3.4]{MaoKar18}).

More technical results  for the fractional integral spaces that would be used in next subsection are presented in Appendix  \ref{sec:Tech:RePr}.

As a consequence of the fractional integral space $J_{p}^{\delta -1}(\Lambda)$ for $0<\delta<1$, we can define the  following Riemann-Liouville fractional derivative space and norm:
\begin{equation}\label{Frac:dspace}
    H_{RL}^{\delta}(\Lambda) := \left\{v:  v \in J_{p}^{\delta -1}(\Lambda),\; \dx^{\delta}v \in L^2(\Lambda)\right\}
,\;
    \|v\|_{H_{RL}^{\delta}(\Lambda)}^2 := \|v\|_{J_{p}^{\delta -1}(\Lambda)}^2 + \|\dx^{\delta}v \|_{L^2(\Lambda)}^2.
\end{equation}
Obviously, the fractional derivative space $H_{RL}^{\delta}(\Lambda)$ is a complete space.
We then define the space $H_{RL}^{\delta, 0}(\Lambda),\, 0<\delta<1$ as follows:
\begin{equation*}
    H_{RL}^{\delta, 0}(\Lambda): = \left\{v:v \in H_{RL}^{\delta}(\Lambda),\; \ix^{1-\delta} v(\pm 1) = 0\right\}.
\end{equation*}

\subsection{Weak problems and their well-posedness}
In this subsection, we give the weak formulations of the fractional Dirichlet boundary  problem \eqref{Intro1.1}-\eqref{IntroFDBC1.2} and the fractional Neumann boundary problem \eqref{Intro1.1}-\eqref{IntroFNBC1.3}, and prove their  well-posedness.

\subsubsection{Fractional Dirichlet boundary problem}
For the sake of simplicity, for the Dirichlet boundary problem, we only discuss the homogeneous BCs, i.e.,
\begin{equation}\label{fDBC:Homo}
    \ix^{2-\alpha} u(-1) = 0, \quad \ix^{2-\alpha} u(1) = 0.
\end{equation}
Actually, the non-homogeneous problem can be transferred into a homogeneous problem by using the lifting technique.
The weak formulation of problem \eqref{Intro1.1}-\eqref{fDBC:Homo} is obtained by multiplying the fractional integral of the test function $v(x)$ where $v(x) \in H_{RL}^{\alpha - 1,0}(\Lambda)$, i.e., $\ix^{2-\alpha}v(x)$, on both sides of equation \eqref{Intro1.1} and integrating over $\Lambda$.
Then, the weak formulation of \eqref{Intro1.1}-\eqref{fDBC:Homo} is to find $u(x) \in H_{RL}^{\alpha - 1,0}(\Lambda)$, such that
\begin{equation}\label{wk:fDBC}
    \mathcal{A}(u,v) = \mathcal{F}(v) \quad \forall v(x) \in H_{RL}^{\alpha - 1, 0}(\Lambda),
\end{equation}
where the bilinear form $\mathcal{A}(\cdot,\cdot)$ and the linear functional $\mathcal{F}(v)$ are, respectively, given by
\begin{eqnarray}
  \mathcal{A}(u,v) &: =& c (u,\ix^{2-\alpha}v ) + (\dx^{\alpha-1}u, \dx^{\alpha-1}v), \label{wk:F:d}\\
  \mathcal{F}(v) &:=& \langle f,\ix^{2-\alpha} v\rangle. \label{wk:F:d:F}
\end{eqnarray}

\begin{Lem}\label{cont:coer:d}
The bilinear form $\mathcal{A}(\cdot,\cdot)$ is continuous and coercive in $H_{RL}^{\alpha-1}(\Lambda)\times H_{RL}^{\alpha-1}(\Lambda)$ , i.e.,
\begin{equation}\label{dbc:cont}
    |\mathcal{A}(u,v)| \lesssim \|u\|_{H_{RL}^{\alpha-1}(\Lambda)} \|v\|_{H_{RL}^{\alpha-1}(\Lambda)}; \;
    \mathcal{A}(u,u) \gtrsim \|u\|_{H_{RL}^{\alpha-1}(\Lambda)}^2 \quad
    \forall\,  u,\,v\, \in H_{RL}^{\alpha-1}(\Lambda).
\end{equation}
%
%
\end{Lem}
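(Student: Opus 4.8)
The plan is to establish the two inequalities in \eqref{dbc:cont} separately, treating the reaction term and the fractional-derivative term independently in both the continuity and the coercivity arguments. For \textbf{continuity}, I would bound $|\mathcal{A}(u,v)|$ by $|c|\,|(u,\ix^{2-\alpha}v)| + |(\dx^{\alpha-1}u,\dx^{\alpha-1}v)|$. The second piece is immediate from the Cauchy--Schwarz inequality in $L^2(\Lambda)$: it is $\le \|\dx^{\alpha-1}u\|_{L^2}\|\dx^{\alpha-1}v\|_{L^2}$, which is dominated by $\|u\|_{H_{RL}^{\alpha-1}}\|v\|_{H_{RL}^{\alpha-1}}$ by the definition \eqref{Frac:dspace}. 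For the first piece, the natural move is to use the fractional integration by parts \eqref{eqn:fintpart} together with the adjoint property \eqref{eqn:adj}, writing $(u,\ix^{2-\alpha}v)$ as a pairing that can be controlled by the $J_p^{\alpha-2}(\Lambda)$-type norms of $u$ and $v$; since $\ix^{2-\alpha}u, \ix^{2-\alpha}v \in L^2(\Lambda)$ by membership in $H_{RL}^{\alpha-1}(\Lambda)$, one gets $|(u,\ix^{2-\alpha}v)| \lesssim \|u\|_{J_p^{\alpha-2}(\Lambda)}\|v\|_{J_p^{\alpha-2}(\Lambda)} \le \|u\|_{H_{RL}^{\alpha-1}}\|v\|_{H_{RL}^{\alpha-1}}$. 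Here I would lean on the technical results for fractional integral spaces promised in Appendix \ref{sec:Tech:RePr}, in particular an equivalence or bound relating $(u,\ix^{2-\alpha}v)$ to the product of the fractional-integral norms.

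For \textbf{coercivity} one computes $\mathcal{A}(u,u) = c(u,\ix^{2-\alpha}u) + \|\dx^{\alpha-1}u\|_{L^2(\Lambda)}^2$. The second term is exactly the seminorm part of $\|u\|_{H_{RL}^{\alpha-1}(\Lambda)}^2$. The key structural fact is that $(u,\ix^{2-\alpha}u)$ is (up to a positive constant) equivalent to $\|u\|_{J_p^{\alpha-2}(\Lambda)}^2$: by \eqref{eqn:fintpart}, $(u,\ix^{2-\alpha}u) = (\ix^{(2-\alpha)/2}u, \tilde{I}^{(2-\alpha)/2}u)$, and using the spectral relation \eqref{PreJa2.18} expanding $u$ in the Jacobi poly-fractonomial basis diagonalizes this pairing with strictly positive eigenvalues $\lambda_n = \Gamma(n+\alpha-1)/\Gamma(n+1) > 0$. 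Thus $(u,\ix^{2-\alpha}u) \gtrsim \|u\|_{J_p^{\alpha-2}(\Lambda)}^2 = \|u\|_{J_p^{\delta-1}(\Lambda)}^2$ with $\delta = \alpha-1$, which is precisely the remaining part of the $H_{RL}^{\alpha-1}(\Lambda)$-norm. Adding the two contributions yields $\mathcal{A}(u,u) \gtrsim \|u\|_{H_{RL}^{\alpha-1}(\Lambda)}^2$, with the implied constant depending on $c$ and on the constants in the spectral/norm equivalences.

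The \textbf{main obstacle} is the sign and size of the reaction contribution $c(u,\ix^{2-\alpha}u)$ in the coercivity estimate. If $c \ge 0$ this term is nonnegative (indeed comparable to $\|u\|_{J_p^{\alpha-2}}^2$ as above) and coercivity is clean; but for $c < 0$ one must argue that the negative contribution cannot swamp the positive $\|\dx^{\alpha-1}u\|_{L^2}^2$ term. This would require a Poincaré-type inequality on $H_{RL}^{\alpha-1,0}(\Lambda)$ bounding $\|u\|_{J_p^{\alpha-2}(\Lambda)}$ by $\|\dx^{\alpha-1}u\|_{L^2(\Lambda)}$, again most transparently verified in the Jacobi poly-fractonomial basis where $\dx^{\alpha-1}$ acts with eigenvalues growing like $n^{\alpha-1}$ relative to $\ix^{2-\alpha}$ — giving a spectral-gap argument — together with a smallness assumption on $|c|$ (or a restriction to $c \ge 0$). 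I would state the precise hypothesis on $c$ explicitly and isolate this Poincaré inequality as a separate lemma, since it is the one genuinely non-routine ingredient; the continuity half and the $c \ge 0$ case of coercivity are otherwise direct consequences of \eqref{eqn:adj}, \eqref{eqn:fintpart}, \eqref{PreJa2.18}, and Cauchy--Schwarz.
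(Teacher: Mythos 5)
Your continuity argument contains a genuine gap. The key intermediate claim, $|(u,\ix^{2-\alpha}v)| \lesssim \|u\|_{J_{p}^{\alpha-2}(\Lambda)}\|v\|_{J_{p}^{\alpha-2}(\Lambda)}$, is false: the pairing distributes only $2-\alpha$ orders of integration between the two arguments, while the right-hand side carries $2(2-\alpha)$ orders, so a high-frequency choice $u=v$ (oscillation at frequency $\omega_0$) makes the left side of order $\omega_0^{-(2-\alpha)}$ against $\omega_0^{-2(2-\alpha)}$ on the right, and the ratio blows up. Symmetrizing by the adjoint/semigroup properties only gets you $\|u\|_{J_{p}^{(\alpha-2)/2}}\|v\|_{J_{p}^{(\alpha-2)/2}}$, i.e.\ \emph{stronger} (half-order) integral norms, not the weaker $J_{p}^{\alpha-2}$ norms. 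What the paper actually does is the crude bound $|c(u,\ix^{2-\alpha}v)| \le c\|u\|_{L^2(\Lambda)}\|v\|_{J_{p}^{\alpha-2}(\Lambda)}$ and then proves the genuinely nontrivial embedding $\|u\|_{L^2(\Lambda)} \lesssim \|u\|_{H_{RL}^{\alpha-1}(\Lambda)}$, using the boundedness of the fractional integral operators (so that $L^2(\Lambda)\subset J_{p}^{\alpha-2}(\Lambda)$) together with an interpolation inequality $\|u\|_{L^2} \le \eta\|u\|_{\mathcal{H}^{\alpha-1}} + C_\eta\|u\|_{J_{p}^{\alpha-2}}$ with $\eta=1/2$. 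This embedding is the real content of the continuity half and is entirely missing from your proposal.

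On coercivity your conclusion $(u,\ix^{2-\alpha}u) \gtrsim \|u\|_{J_{p}^{\alpha-2}(\Lambda)}^2$ is the right target, but the justification you sketch does not work as stated: the poly-fractonomial expansion does not diagonalize this pairing, since $J_n^{-\mu,-\nu}$ and $P_m^{\nu,\mu}$ are not bi-orthogonal in the \emph{unweighted} $L^2$ inner product (the orthogonality \eqref{PreJa2.9} is with respect to $\omega^{-\mu,-\nu}$), and positivity of the $\lambda_n$ alone would not yield a bound comparable to $\|\ix^{2-\alpha}u\|_{L^2}^2$. The paper instead uses the fractional integration by parts \eqref{eqn:fintpart} together with the Fourier-transform identity \eqref{eqn:equv}, $(\ilx^{s}v,\irx^{s}v)=\cos(\pi s)\|v\|_{J^{-s}(\Lambda)}^2$ with $s=1-\alpha/2<1/2$, and the embeddings \eqref{eqn:intequvi}--\eqref{eqn:intcoerlr} to obtain \eqref{eqn:intcoer}; you would need this (or an equivalent) argument. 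Finally, note that the borderline case is $c=0$, not $c<0$ (which the paper never considers): when $c=0$ the reaction term vanishes and coercivity requires precisely the Poincar\'e inequality $\|u\|_{J_{p}^{\alpha-2}(\Lambda)} \le C\|\dx^{\alpha-1}u\|_{L^2(\Lambda)}$ on the subspace $H_{RL}^{\alpha-1,0}(\Lambda)$ with vanishing fractional boundary values; you mention such an inequality but attach it to the wrong case, so $c=0$ is left uncovered in your write-up.
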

\begin{proof}
By the Cauchy-Schwarz inequality, we have
\begin{equation}\label{dbc:cont:pf1}
  |\mathcal{A}(u,v)| \le  c \|u\|_{L^2(\Lambda)} \|v\|_{J_{p}^{\alpha-2}(\Lambda)} + \|\dx^{\alpha-1}u\|_{L^2(\Lambda)} \|\dx^{\alpha-1}v\|_{L^2(\Lambda)}
\end{equation}
Moreover, by letting $q= 2$ in Lemma \ref{lem:ixbdd}, we deduce
\begin{equation*}
    \|u\|_{J_{p}^{\alpha-2}(\Lambda)}^2 \le C_1\|u\|_{J_{l}^{\alpha-2}(\Lambda)}^2 +C_2\|u\|_{J_{r}^{\alpha-2}(\Lambda)}^2 \le C \|u\|_{L^2(\Lambda)}^2.
\end{equation*}
%
This means that  $L^2(\Lambda) \subset J_{l}^{\alpha-2}(\Lambda)$, $ L^2(\Lambda) \subset J_{r}^{\alpha-2}(\Lambda)$ and $L^2(\Lambda) \subset J_{p}^{\alpha-2}(\Lambda)$.
On the other hand, $\mathcal{H}^{\alpha-1}(\Lambda) \subset L^2(\Lambda)$ where
\begin{equation*}
   \mathcal{H}^{\alpha-1}(\Lambda) := \left\{w:\|w\|_{L^2(\Lambda)} + \|\dx^{\alpha-1}w\|_{L^2(\Lambda)}< \infty \right\}.
\end{equation*}
Then, by the interpolation theorem, $\forall \, \eta>0$, there exists a constant $C_{\eta}$ such that
\begin{equation*}
    \|u\|_{L^2(\Lambda)} \le \eta \|u\|_{\mathcal{H}^{\alpha-1}(\Lambda)} + C_{\eta}\|u\|_{J_{p}^{\alpha-2}(\Lambda)}.
\end{equation*}
Letting $\eta = 1/2$, we arrive at
\begin{equation*}
    \|u\|_{L^2(\Lambda)} \le 1/2 (\|u\|_{L^2(\Lambda)} + \|\dx^{\alpha-1}u\|_{L^2(\Lambda)}) + C_{1/2}\|u\|_{J_{p}^{\alpha-2}(\Lambda)},
\end{equation*}
which yields
\begin{equation*}
    \|u\|_{L^2(\Lambda)} \le C \|u\|_{H_{RL}^{\alpha-1}(\Lambda)}.
\end{equation*}
Thus, we obtain from the above estimate and \eqref{dbc:cont:pf1} that the bilinear form $\mathcal{A}(\cdot,\cdot)$ is continuous in  $H_{RL}^{\alpha-1}(\Lambda)\times H_{RL}^{\alpha-1}(\Lambda)$, i.e., the estimate \eqref{dbc:cont} holds true.

If $c > 0$, the coercivity can be readily derived by the fractional integration by parts \eqref{eqn:fintpart} and letting $s = 1-\alpha/2,\; t = 2-\alpha$ in the estimate \eqref{eqn:intcoer}. For the case of $c \equiv 0$, the coercivity can be obtained by applying the Poincar{\'e} inequality to the function $\ix^{2-\alpha} u(x)$, namely,
\begin{equation*}
    \|u\|_{{J_{p}^{\alpha-2}(\Lambda)}} \le C\|\dx^{\alpha-1}u\|_{L^2(\Lambda)} \quad \forall \, u\in H_{RL}^{\alpha-1,0}(\Lambda).
\end{equation*}
\end{proof}

For the linear functional $\mathcal{F}(v)$ given by \eqref{wk:F:d:F}, we have for $f\in H^{-1}(\Lambda)$,
\begin{equation*}
    \mathcal{F}(v)= \langle f,\ix^{2-\alpha}v \rangle \le \|f\|_{H^{-1}(\Lambda)} \|v\|_{H_{RL}^{\alpha-1}(\Lambda)}.
\end{equation*}
We then have the following result:
\begin{Lem}\label{lem:flinear:d}
Let $f\in H^{-1}(\Lambda)$.
The linear functional $\mathcal{F}(v)$ given by \eqref{wk:F:d:F} is continuous on $H_{RL}^{\alpha-1}(\Lambda)$.
\end{Lem}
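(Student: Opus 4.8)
The plan is to exploit the fact that the functional $\mathcal{F}$ in \eqref{wk:F:d:F} factors through the two-sided fractional integral $\ix^{2-\alpha}$, which maps the energy space $H_{RL}^{\alpha-1}(\Lambda)$ isometrically into $H^1(\Lambda)$; continuity of $\mathcal{F}$ then follows at once from the $H^{-1}$--$H^1_0$ duality pairing, and in fact the proof reduces to the one-line estimate already displayed just before the statement.

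First I would record the identity $\dx^{\alpha-1}v=\frac{d}{dx}\,\ix^{2-\alpha}v$, which is immediate from the definition \eqref{PreJa2.11}--\eqref{defn:RL:short} of the two-sided R-L derivative with $k=1$ and order $\alpha-1\in(0,1)$. Combining this with the definitions \eqref{defn:fintsps} and \eqref{Frac:dspace} of the norms $\|\cdot\|_{J_{p}^{\alpha-2}(\Lambda)}$ and $\|\cdot\|_{H_{RL}^{\alpha-1}(\Lambda)}$, I get, for every $v\in H_{RL}^{\alpha-1}(\Lambda)$,
\begin{equation*}
\|\ix^{2-\alpha}v\|_{H^1(\Lambda)}^2=\|\ix^{2-\alpha}v\|_{L^2(\Lambda)}^2+\|\tfrac{d}{dx}\ix^{2-\alpha}v\|_{L^2(\Lambda)}^2=\|v\|_{J_{p}^{\alpha-2}(\Lambda)}^2+\|\dx^{\alpha-1}v\|_{L^2(\Lambda)}^2=\|v\|_{H_{RL}^{\alpha-1}(\Lambda)}^2,
\end{equation*}
so $v\mapsto\ix^{2-\alpha}v$ is an isometry of $H_{RL}^{\alpha-1}(\Lambda)$ into $H^1(\Lambda)$; moreover, when $v\in H_{RL}^{\alpha-1,0}(\Lambda)$ the boundary conditions $\ix^{2-\alpha}v(\pm1)=0$ place $\ix^{2-\alpha}v$ in $H^1_0(\Lambda)$.

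Then, for $f\in H^{-1}(\Lambda)$ and any $v$ in the energy space, I would apply the duality bound to $\mathcal{F}(v)=\langle f,\ix^{2-\alpha}v\rangle$:
\begin{equation*}
|\mathcal{F}(v)|=|\langle f,\ix^{2-\alpha}v\rangle|\le\|f\|_{H^{-1}(\Lambda)}\,\|\ix^{2-\alpha}v\|_{H^1(\Lambda)}=\|f\|_{H^{-1}(\Lambda)}\,\|v\|_{H_{RL}^{\alpha-1}(\Lambda)},
\end{equation*}
which is exactly the asserted continuity of $\mathcal{F}$ on $H_{RL}^{\alpha-1}(\Lambda)$.

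There is no genuine obstacle here. The only point that deserves a little care is the mapping property $\ix^{2-\alpha}\colon H_{RL}^{\alpha-1}(\Lambda)\to H^1(\Lambda)$ -- specifically, verifying that $\ix^{2-\alpha}v$ has a weak $L^2$ derivative equal to $\dx^{\alpha-1}v$, so that the norm identity above is legitimate; this is precisely where the definitions of the fractional derivative and of the spaces $J_{p}^{\alpha-2}(\Lambda)$ and $H_{RL}^{\alpha-1}(\Lambda)$ enter. If one wishes to be fully rigorous about the $H^{-1}$--$H^1_0$ pairing, one restricts to $v\in H_{RL}^{\alpha-1,0}(\Lambda)$, the space on which the weak problem \eqref{wk:fDBC} is actually posed, where $\ix^{2-\alpha}v\in H^1_0(\Lambda)$; the estimate, and hence the conclusion, are unchanged.
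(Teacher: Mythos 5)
Your proposal is correct and follows essentially the same route as the paper: the paper's proof of this lemma is exactly the one-line duality estimate $\mathcal{F}(v)=\langle f,\ix^{2-\alpha}v\rangle\le\|f\|_{H^{-1}(\Lambda)}\|v\|_{H_{RL}^{\alpha-1}(\Lambda)}$ displayed just before the statement, which you have simply made explicit by spelling out the isometry $\|\ix^{2-\alpha}v\|_{H^1(\Lambda)}=\|v\|_{H_{RL}^{\alpha-1}(\Lambda)}$ coming from the definitions \eqref{defn:fintsps} and \eqref{Frac:dspace}. Your closing remark about restricting to $H_{RL}^{\alpha-1,0}(\Lambda)$ so that $\ix^{2-\alpha}v\in H^1_0(\Lambda)$ for the $H^{-1}$ pairing is a reasonable clarification of a point the paper also glosses over.
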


Using Lemma \ref{cont:coer:d}, \ref{lem:flinear:d} and the Lax-Milgram theorem, we have the well-posedness of the weak problem \eqref{wk:fDBC}, namely, we have the  the following result:
\begin{Theo}\label{thm:well-posed:d}
For $f\in H^{-1}(\Lambda)$, the weak problem \eqref{wk:fDBC} admits a unique solution $u(x)\in H_{RL}^{\alpha - 1, 0}(\Lambda)$ satisfying
\begin{equation}\label{eqn:uesti:d}
    \|u\|_{H_{RL}^{\alpha-1}(\Lambda)} \lesssim  \|f\|_{H^{-1}(\Lambda)}.
\end{equation}
\end{Theo}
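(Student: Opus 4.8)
The plan is a direct application of the Lax--Milgram theorem on the space $H_{RL}^{\alpha-1,0}(\Lambda)$, feeding it Lemmas~\ref{cont:coer:d} and \ref{lem:flinear:d}. First I would verify that $H_{RL}^{\alpha-1,0}(\Lambda)$ is a closed subspace of the complete space $H_{RL}^{\alpha-1}(\Lambda)$, hence itself a Hilbert space under $\|\cdot\|_{H_{RL}^{\alpha-1}(\Lambda)}$. The one point needing care is that the two boundary functionals $v\mapsto\ix^{2-\alpha}v(\pm1)$ are well defined and bounded on $H_{RL}^{\alpha-1}(\Lambda)$: by \eqref{Frac:dspace} together with the identity $\dx^{\alpha-1}=\tfrac{d}{dx}\ix^{2-\alpha}$ (take $k=1$ in \eqref{PreJa2.11}), every $v\in H_{RL}^{\alpha-1}(\Lambda)$ satisfies $\ix^{2-\alpha}v\in L^2(\Lambda)$ and $\tfrac{d}{dx}\ix^{2-\alpha}v\in L^2(\Lambda)$, so $\ix^{2-\alpha}v\in H^1(\Lambda)\hookrightarrow C(\overline{\Lambda})$ with $\|\ix^{2-\alpha}v\|_{C(\overline{\Lambda})}\lesssim\|v\|_{H_{RL}^{\alpha-1}(\Lambda)}$. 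Consequently $H_{RL}^{\alpha-1,0}(\Lambda)$ is the kernel of a bounded linear map $H_{RL}^{\alpha-1}(\Lambda)\to\mathbb{R}^2$, hence closed.

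Next I would observe that the bilinear form $\mathcal{A}(\cdot,\cdot)$ of \eqref{wk:F:d}, restricted to $H_{RL}^{\alpha-1,0}(\Lambda)\times H_{RL}^{\alpha-1,0}(\Lambda)$, inherits continuity and coercivity from Lemma~\ref{cont:coer:d}, which already provides both on the larger product space $H_{RL}^{\alpha-1}(\Lambda)\times H_{RL}^{\alpha-1}(\Lambda)$; and that the functional $\mathcal{F}$ of \eqref{wk:F:d:F} is bounded on $H_{RL}^{\alpha-1,0}(\Lambda)$ by Lemma~\ref{lem:flinear:d}, with operator norm at most $\|f\|_{H^{-1}(\Lambda)}$. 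The Lax--Milgram theorem then yields a unique $u\in H_{RL}^{\alpha-1,0}(\Lambda)$ solving \eqref{wk:fDBC}.

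For the a priori bound \eqref{eqn:uesti:d} I would test \eqref{wk:fDBC} with $v=u$, so that coercivity and the bound on $\mathcal{F}$ give
\[
  \|u\|_{H_{RL}^{\alpha-1}(\Lambda)}^2 \;\lesssim\; \mathcal{A}(u,u) \;=\; \mathcal{F}(u) \;\le\; \|f\|_{H^{-1}(\Lambda)}\,\|u\|_{H_{RL}^{\alpha-1}(\Lambda)};
\]
dividing through by $\|u\|_{H_{RL}^{\alpha-1}(\Lambda)}$ (the estimate being trivial when $u=0$) produces \eqref{eqn:uesti:d}.

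Essentially every step is routine assembly of the two preceding lemmas with a standard abstract theorem; the only genuine verification is the closedness of $H_{RL}^{\alpha-1,0}(\Lambda)$, which rests on the $H^1$-regularity of $\ix^{2-\alpha}v$ and is precisely what makes the fractional Dirichlet trace meaningful. I would also note that the case $c\equiv0$ uses the Poincar\'e-type inequality inside Lemma~\ref{cont:coer:d}, so the argument genuinely requires working in the homogeneous subspace rather than in all of $H_{RL}^{\alpha-1}(\Lambda)$.
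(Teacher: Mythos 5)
Your proposal is correct and follows essentially the same route as the paper: Lax--Milgram applied with the continuity/coercivity of Lemma~\ref{cont:coer:d} and the boundedness of $\mathcal{F}$ from Lemma~\ref{lem:flinear:d}, followed by testing with $v=u$ to obtain \eqref{eqn:uesti:d}. Your additional verification that $H_{RL}^{\alpha-1,0}(\Lambda)$ is a closed subspace (via the $H^1$-regularity of $\ix^{2-\alpha}v$ and the boundedness of the fractional boundary traces) is a point the paper leaves implicit, and it is a worthwhile clarification rather than a departure from the argument.
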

The estimate \eqref{eqn:uesti:d} follows from
\begin{equation*}
    \|u\|_{H_{RL}^{\alpha-1}(\Lambda)}^2 \lesssim \mathcal{A}(u,u) = \langle f,\ix^{2-\alpha}u\rangle  \le \|f\|_{H^{-1}(\Lambda)} \|u\|_{H_{RL}^{\alpha-1}(\Lambda)}.
\end{equation*}

\subsubsection{Fractional Neumann boundary problem}
We now consider the fractional Neumann boundary problem \eqref{Intro1.1}-\eqref{IntroFNBC1.3}. In this case, we assume that $c$ is positive away from $0$. For the case of $c\equiv 0$, we can add the condition of mass conservation, but we will not discuss this case here.
The weak formulation of problem \eqref{Intro1.1} and \eqref{IntroFNBC1.3} is obtained in the same way as that for the fractional Dirichlet boundary problem. Then, we obtain the weak formulation of \eqref{Intro1.1} and \eqref{IntroFNBC1.3}: find $u(x) \in H_{RL}^{\alpha - 1}(\Lambda)$, such that
\begin{equation}\label{wk:fNBC}
    \mathcal{A}(u,v) = \mathrm{F}(v) \quad \forall v(x) \in H_{RL}^{\alpha - 1}(\Lambda),
\end{equation}
where the bilinear form $\mathcal{A}(u,v)$ is, again, given by \eqref{wk:F:d} and  the linear functional $\mathrm{F}(v)$ in this case is given by
\begin{equation}\label{wk:F:n}
    \mathrm{F}(v):=\langle f,\ix^{2-\alpha}v\rangle  + g_{2}\,\ix^{2-\alpha} v(1) - g_{1}\,\ix^{2-\alpha}v(-1).
\end{equation}

The continuity and coercivity of  the bilinear form $\mathcal{A}(\cdot,\cdot)$ are given in Lemma \ref{cont:coer:d}. Next we prove the continuity of the linear functional $\mathrm{F}(v)$ for the fractional Neumann boundary problem.
\begin{Lem}
Let $f\in H^{-1}(\Lambda)$.
The linear functional $\mathrm{F}(v)$ given by \eqref{wk:F:n} is continuous on $H_{RL}^{\alpha-1}(\Lambda)$.
\end{Lem}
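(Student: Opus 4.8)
The plan is to split $\mathrm{F}(v)$ into its three pieces and estimate each one separately against $\|v\|_{H_{RL}^{\alpha-1}(\Lambda)}$. The first term, $\langle f,\ix^{2-\alpha}v\rangle$, is precisely the Dirichlet functional $\mathcal{F}(v)$ treated in Lemma~\ref{lem:flinear:d}, so it already satisfies $|\langle f,\ix^{2-\alpha}v\rangle|\le \|f\|_{H^{-1}(\Lambda)}\|v\|_{H_{RL}^{\alpha-1}(\Lambda)}$. It therefore remains only to bound the two boundary terms $g_2\,\ix^{2-\alpha}v(1)$ and $g_1\,\ix^{2-\alpha}v(-1)$, i.e.\ to establish a trace-type estimate $|\ix^{2-\alpha}v(\pm1)|\lesssim \|v\|_{H_{RL}^{\alpha-1}(\Lambda)}$.

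The key observation is that $w:=\ix^{2-\alpha}v$ belongs to $H^1(\Lambda)$ whenever $v\in H_{RL}^{\alpha-1}(\Lambda)$, with its full $H^1$ norm controlled by $\|v\|_{H_{RL}^{\alpha-1}(\Lambda)}$. Indeed, by the definition of the norm on $J_p^{\alpha-2}(\Lambda)$ we have $\|w\|_{L^2(\Lambda)}=\|\ix^{2-\alpha}v\|_{L^2(\Lambda)}=\|v\|_{J_p^{\alpha-2}(\Lambda)}$, while by the definition of the two-sided R-L derivative of order $\alpha-1\in(0,1)$ (take $k=1$ in \eqref{PreJa2.11} and \eqref{defn:RL:short}), $w'=\tfrac{d}{dx}\ix^{2-\alpha}v=\dx^{\alpha-1}v\in L^2(\Lambda)$. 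Hence $\|w\|_{H^1(\Lambda)}^2=\|w\|_{L^2(\Lambda)}^2+\|w'\|_{L^2(\Lambda)}^2=\|v\|_{J_p^{\alpha-2}(\Lambda)}^2+\|\dx^{\alpha-1}v\|_{L^2(\Lambda)}^2=\|v\|_{H_{RL}^{\alpha-1}(\Lambda)}^2$. In particular $w$ has a continuous representative on $\overline{\Lambda}$, so the boundary values $\ix^{2-\alpha}v(\pm1)$ are well defined.

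Next I would invoke the one-dimensional Sobolev embedding $H^1(\Lambda)\hookrightarrow C(\overline{\Lambda})$ (equivalently, the endpoint trace inequality on an interval, which follows from Cauchy--Schwarz applied to $w(x)=w(y)+\int_y^x w'$), obtaining $|w(\pm1)|\le \|w\|_{C(\overline{\Lambda})}\lesssim \|w\|_{H^1(\Lambda)}=\|v\|_{H_{RL}^{\alpha-1}(\Lambda)}$. Combining the three estimates gives
\[
|\mathrm{F}(v)|\le \|f\|_{H^{-1}(\Lambda)}\|v\|_{H_{RL}^{\alpha-1}(\Lambda)}+(|g_1|+|g_2|)\,\|w\|_{C(\overline{\Lambda})}\lesssim \bigl(\|f\|_{H^{-1}(\Lambda)}+|g_1|+|g_2|\bigr)\|v\|_{H_{RL}^{\alpha-1}(\Lambda)},
\]
which is exactly the asserted continuity of $\mathrm{F}(\cdot)$ on $H_{RL}^{\alpha-1}(\Lambda)$.

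The only mild subtlety — the step I would be most careful about — is the identification $\tfrac{d}{dx}\ix^{2-\alpha}v=\dx^{\alpha-1}v$ in the distributional sense, which is what upgrades $\ix^{2-\alpha}v$ to an $H^1(\Lambda)$ function and thereby makes the pointwise traces $\ix^{2-\alpha}v(\pm1)$ meaningful in the first place. This is immediate from the very definition $\dx^{\alpha-1}=D\,\ix^{2-\alpha}$, but it deserves an explicit sentence since the whole boundary estimate hinges on it; once it is in place the remainder is the standard one-dimensional trace argument.
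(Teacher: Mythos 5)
Your proposal is correct and follows essentially the same route as the paper: split $\mathrm{F}(v)$ into the $f$-term plus the two boundary terms, reuse the Dirichlet estimate for the former, and bound $|\ix^{2-\alpha}v(\pm1)|$ by $C\|v\|_{H_{RL}^{\alpha-1}(\Lambda)}$. The only difference is that the paper asserts this last trace-type bound without comment, whereas you justify it explicitly via $w=\ix^{2-\alpha}v\in H^1(\Lambda)$ with $w'=\dx^{\alpha-1}v$ and the embedding $H^1(\Lambda)\hookrightarrow C(\overline{\Lambda})$ --- a detail worth keeping.
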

\begin{proof}
%
%
We can estimate $\mathrm{F}(v)$ in the following way:
\begin{equation*}
\begin{aligned}
    \mathrm{F}(v) &= \langle f,\ix^{2-\alpha}v\rangle + g_{2}\, \ix^{2-\alpha} v(1) - g_{1}\, \ix^{2-\alpha}v(-1)\\
    & \le \|f\|_{H^{-1}(\Lambda)} \|v\|_{H_{RL}^{\alpha-1}(\Lambda)} +  |g_{2}|\cdot|\ix^{2-\alpha} v(1)| + |g_{1}|\cdot|\ix^{2-\alpha} v(-1)| \\
    & \le \big(\|f\|_{H^{-1}(\Lambda)} + C(|g_1|+|g_{2}|) \big)\|v\|_{H_{RL}^{\alpha-1}(\Lambda)}.
\end{aligned}
\end{equation*}
Hence, the linear functional $\mathrm{F}(v)$ is continuous on $H_{RL}^{\alpha-1}(\Lambda)$.
\end{proof}

Again, by using the Lax-Milgram lemma, we can establish the well-posedness of the weak problem \eqref{wk:fNBC}, namely, we arrive at the following Theorem:
%
\begin{Theo}\label{thm:well-posed:n}
For $f\in H^{-1}(\Lambda)$,
the weak problem \eqref{wk:fNBC} admits a unique solution $u(x)\in H_{RL}^{\alpha - 1}(\Lambda)$ satisfying
\begin{equation*}
    \|u\|_{H_{RL}^{\alpha-1}(\Lambda)} \lesssim  \|f\|_{H^{-1}(\Lambda)} + C(|g_1|+|g_{2}|),
\end{equation*}
where $C$ is a constant.
\end{Theo}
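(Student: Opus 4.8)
The plan is to apply the Lax--Milgram theorem to the weak problem \eqref{wk:fNBC}, exactly as was done for the Dirichlet case in Theorem \ref{thm:well-posed:d}. The three hypotheses of Lax--Milgram have in fact all been verified already in the surrounding text: the continuity and coercivity of the bilinear form $\mathcal{A}(\cdot,\cdot)$ on $H_{RL}^{\alpha-1}(\Lambda)\times H_{RL}^{\alpha-1}(\Lambda)$ is Lemma \ref{cont:coer:d} (note that here we genuinely need the case $c>0$ of that lemma, since the Poincar\'e-based argument for $c\equiv 0$ relied on the zero boundary data of $H_{RL}^{\alpha-1,0}(\Lambda)$, which is not available on the full space $H_{RL}^{\alpha-1}(\Lambda)$; this is why the statement assumes $c$ positive away from $0$), and the continuity of $\mathrm{F}(\cdot)$ on $H_{RL}^{\alpha-1}(\Lambda)$ is the Lemma immediately preceding this Theorem.

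First I would invoke Lax--Milgram to conclude that \eqref{wk:fNBC} has a unique solution $u\in H_{RL}^{\alpha-1}(\Lambda)$. Then, to get the stated bound, I would test the equation with $v=u$ and chain together coercivity and the continuity estimate for $\mathrm{F}$:
\begin{equation*}
    \|u\|_{H_{RL}^{\alpha-1}(\Lambda)}^2 \lesssim \mathcal{A}(u,u) = \mathrm{F}(u) \le \big(\|f\|_{H^{-1}(\Lambda)} + C(|g_1|+|g_2|)\big)\|u\|_{H_{RL}^{\alpha-1}(\Lambda)},
\end{equation*}
and divide through by $\|u\|_{H_{RL}^{\alpha-1}(\Lambda)}$ (the bound being trivial if $u=0$). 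This reproduces the asserted estimate. The only subtlety worth spelling out is the implicit coercivity constant: since we use the $c>0$ branch of Lemma \ref{cont:coer:d}, the hidden constant in $\lesssim$ depends on a lower bound for $c$, so the hypothesis ``$c$ positive away from $0$'' is exactly what is needed.

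The main obstacle is not really in this proof at all but in the ingredients it cites — specifically, making sure the boundary-evaluation functionals $v\mapsto \ix^{2-\alpha}v(\pm1)$ appearing in \eqref{wk:F:n} are well defined and bounded on $H_{RL}^{\alpha-1}(\Lambda)$, i.e. that a trace-type inequality $|\ix^{2-\alpha}v(\pm 1)|\le C\|v\|_{H_{RL}^{\alpha-1}(\Lambda)}$ holds. This is what underlies the constant $C$ in the preceding Lemma and in the Theorem's bound; it follows because $\ix^{2-\alpha}v\in H^1(\Lambda)$ by \eqref{Frac:dspace} together with the continuous embedding $H^1(\Lambda)\hookrightarrow C(\bar\Lambda)$ in one dimension. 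Once that point is granted, the remaining argument is the routine Lax--Milgram/energy-estimate template and requires no further work.
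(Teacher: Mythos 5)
Your proposal is correct and follows essentially the same route as the paper: the paper likewise obtains this theorem directly from the Lax--Milgram lemma using Lemma \ref{cont:coer:d} and the continuity of $\mathrm{F}$ from the preceding lemma, with the a priori bound coming from the same coercivity/continuity chain as in the Dirichlet case. Your additional remarks (the need for the $c>0$ branch of the coercivity lemma and the trace bound $|\ix^{2-\alpha}v(\pm1)|\le C\|v\|_{H_{RL}^{\alpha-1}(\Lambda)}$ via $\ix^{2-\alpha}v\in H^1(\Lambda)\hookrightarrow C(\bar\Lambda)$) merely make explicit steps the paper leaves implicit.
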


\section{Spectral penalty method (SPM)}\label{sec:spm:RL}
We now consider the spectral approximation to the solution of the fractional problem \eqref{Intro1.1}. One possibility is to extend the method of \cite{MaoKar18} and formulate a PGS-$\tau$ as presented in Appendix  \ref{sec:app:tau} to solve the general non-homogeneous fractional  boundary problem.
However, as we will see, the accuracy  of PGS-$\tau$ is not as high due to the limited regularity of the solution of fractional problems with general non-homogeneous boundaries. Thus, we formulate SPM to discretize the fractional problem \eqref{Intro1.1}  with general BCs \eqref{IntroFDBC1.2}-\eqref{IntroFNBC1.5}.

\subsection{SPM for conservative two-sided R-L FDEs}
We first introduce SPM for  the two-sided conservative R-L FDE \eqref{Intro1.1} with the FDBCs \eqref{IntroFDBC1.2} or the  FNBCs \eqref {IntroFNBC1.3}.
In this case, we shall use the poly-fractonomials, i.e., $J_k^{-\mu,-\nu},k=0,1,\ldots$,  introduced in Section \ref{sec:pre} to approximate the solutions.
To this end, we  introduce some notations. Let $\omega >0$ be a generic weight function
and $\mathbb{P}_N$ be the space of polynomials of degree at most $N$. In addition, let $\mu, \nu$ be two real number satisfying the condition \eqref{PreJa2.13}.
We define the finite-dimensional  space:
\begin{equation*}
\mathcal{F}_{N}^{-\mu,-\nu}:=\left\{\phi=(1-x)^{\mu}(1+x)^{\nu}\varphi: \varphi \in \mathbb{P}_N \right\}=span\left\{J_{n}^{-\mu,-\nu}: 0\leq n \leq N \right\}.
\end{equation*}

\subsubsection{Numerical implementation of SPM}

By multiplying $\ix^{2-\a}v$ on both sides of \eqref{Intro1.1} and introducing the penalty parameters $\rho_{\pm}$ and the penalty functions $Q_N^{\pm}(x)$,  we have
the spectral penalty scheme, more general in a weighted sense, for \eqref{Intro1.1}-\eqref{IntroFDBC1.2} or \eqref{Intro1.1}-\eqref{IntroFNBC1.3}: find $u_{N} \in \mathcal{F}_{N}^{-\mu,-\nu}$ such that
\begin{equation}\label{RLPG}
\begin{aligned}
  &\mathcal{A}_P^{R-L}(u_N,v,\rho_{\pm}, Q_N^{\pm}) \\
  = & \langle f,\ix^{2-\a}v \rangle_{\omega}+\rho_{-}g_1(Q_N^{-},\ix^{2-\a}v)_{\omega}+\rho_{+}g_2( Q_N^{+},\ix^{2-\a}v)_{\omega} \quad \forall  v\in \mathcal{F}_{N}^{-\mu,-\nu},
\end{aligned}
\end{equation}
where the bilinear form $\mathcal{A}_P^{R-L}(\cdot,\cdot,\cdot,\cdot)$ is given by
\begin{equation*}
  \begin{split}
  \mathcal{A}_P^{R-L}(u,v,\rho_{\pm}, Q_N^{\pm}) : = -(\dx^{\alpha}u,\ix^{2-\a}v)_{\omega}+c(u,\ix^{2-\a}v)_{\omega} &\\
  +\rho_{-}\,\mathcal{B}_{-}u(-1)\,(Q_N^{-},\ix^{2-\a}v)_{\omega} +\rho_{+}\,\mathcal{B}_{+}u(1)\,(Q_N^{+},\ix^{2-\a}v)_{\omega}&
  \end{split}
\end{equation*}
and  $\mathcal{B}_{\pm}u(\pm1)=\ix^{2-\alpha}u(\pm1)$ for FDBCs \eqref{IntroFDBC1.2} while $ \mathcal{B}_{\pm}u(\pm1)=\dx^{\alpha-1}u(\pm1)$ for FNBCs \eqref{IntroFNBC1.3}. $Q_N^{\pm}$ and $\rho_{\pm}$ are  to be determined.

By taking
$u_N(x)=\sum_{k=0}^{N}\tilde{u}_{k}J_{k}^{-\mu,-\nu}(x)$,
letting the test functions  be $J_{i}^{-\mu,-\nu}(x)$, $0\leq i\leq N$, and denoting $\varphi_{i}(x) = \ix^{2-\a}J_{i}^{-\mu,-\nu}(x)$, we obtain the linear system
\begin{equation}\label{RLLMS}
  (-S + cM +B)U=\widehat{F}+\widetilde{F},
\end{equation}
where $U=(\tilde{u}_0,\tilde{u}_1,\cdots,\tilde{u}_{N})^T$ and
\begin{equation*}
\begin{aligned}
  &S=(s_{ik})_{i,k=0}^{N}, \; s_{ik}=(\mathcal{D}_{p}^{\alpha}J_k^{-\mu,-\nu},\varphi_i)_{\omega};
  \quad M=(m_{ik})_{i,k=0}^{N}, \;  m_{ik}=(J_k^{-\mu,-\nu},\varphi_{i})_{\omega};\\
  &B=(b_{ik}^{+} + b_{ik}^{-})_{i,k=0}^{N}, \;  b_{ik}^{\pm}=\rho_{\pm}(\mathcal{B}_{\pm}J_k^{-\mu,-\nu})(\pm 1) (Q_N^{\pm},\varphi_{i})_{\omega};\\
  & \widehat{F}=(\hat{f}_{0},\cdots,\hat{f}_{N})^{T},\; \hat{f}_{i}=\langle f,\varphi_{i}\rangle_{\omega}; \\
  &\widetilde{F}=(\tilde{f}_0,\cdots,\tilde{f}_{N})^{T},\; \tilde{f}_{i}= \rho_{+}g_2(Q_N^{+},\varphi_{i})_{\omega} +\rho_{-}g_1(Q_N^{-},\varphi_{i})_{\omega}.
\end{aligned}
\end{equation*}
We point out that the value of $\mathcal{D}_{p}^{\alpha}J_k^{-\mu,-\nu}$ and $\ix^{2-\a}J_k^{-\mu,-\nu}$ can be directly obtained by the spectral relationship \eqref{PreJa2.18} and \eqref{PreJa2.19}, and then all the integrals can be calculated using the Gauss quadrature.

\subsubsection{A sufficient condition for the coercivity of \eqref{RLPG}}
Once we have SPM \eqref{RLPG}, then we pose the  question on how to choose the parameters $\rho_{\pm}$ and functions $Q_N^{\pm}$. The crucial idea for choosing $\rho_{\pm}$ and $Q_N^{\pm}$ is to obtain the \emph{coercivity} of SPM \eqref{RLPG}. We now give a \emph{sufficient condition} for the \emph{coercivity} of SPM \eqref{RLPG}.

We first consider the case of FDBCs \eqref{IntroFDBC1.2}. In this case, $\mathcal{B}_{-}u_{N}(-1)= \mathcal{I}_{p}^{2-\alpha}u_{N}(-1)$, $ \mathcal{B}_{+}u_{N}(1)= \mathcal{I}_{p}^{2-\alpha}u_{N}(1)$, and we give the sufficient condition for the \emph{coercivity} with $c= 0$.
\begin{Theo}\label{theo4.1}
Let $c = 0$, $u_{N}$ be the solution of the penalty scheme \eqref{RLPG}
and  $\omega(x)=\omega^{\tilde{\alpha},\tilde{\beta}}(x) ~(-1<\tilde{ \alpha},\tilde{\beta} < 1)$ be the Jacobi type weight function.
Then
\begin{eqnarray}\label{CoerFDBC}
   \mathcal{A}_P^{R-L}(u_N,u_N,\rho_{\pm}, Q_N^{\pm}) \gtrsim |W_N|_{1,\omega}^2
\end{eqnarray}
provided
\begin{equation}\label{cond:coe:FDBC}
    \rho_{-}Q_{N}^{-}(x)=D^{2}P_{N}^{-}(x),\quad \rho_{+}Q_{N}^{+}(x)=D^{2}P_{N}^{+}(x),
\end{equation}
where
\begin{equation}\label{eqn:PNpm}
    P_N^{-}(x)=\frac{(1-x)P_{N+1}^{\tilde{\alpha}+1, \tilde{\beta}}(x)}{2P_{N+1}^{\tilde{\alpha}+1, \tilde{\beta}}(-1)},\; P_N^{+}(x)=\frac{(1+x)P_{N+1}^{\tilde{\alpha}, \tilde{\beta}+1}(x)}{2P_{N+1}^{\tilde{\alpha}, \tilde{\beta}+1}(1)}
\end{equation}
and $W_{N}(x)=\ix^{2-\alpha}u_{N}(x)-\ix^{2-\alpha}u_{N}(-1)\cdot P_{N}^{-}(x) - \ix^{2-\alpha}u_{N}(1)\cdot P_{N}^{+}(x)$.
\end{Theo}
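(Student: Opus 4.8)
The plan is to start from the definition of $\mathcal{A}_P^{R-L}$ with $c=0$ and the FDBC choice $\mathcal{B}_\pm u_N(\pm1)=\ix^{2-\alpha}u_N(\pm1)$, expand $-(\dx^\alpha u_N,\ix^{2-\alpha}u_N)_\omega$ using the fractional integration by parts coming from \eqref{eqn:semig}--\eqref{eqn:fintpart}, and rewrite everything in terms of the auxiliary function $W_N$. First I would observe that, since $\dx^\alpha = D^2\ix^{2-\alpha}$ by \eqref{PreJa2.11} (with $k=2$), and $\ix^{2-\alpha}u_N\in\mathbb{P}_N$ by the spectral relation \eqref{PreJa2.18}, the quantity $V_N := \ix^{2-\alpha}u_N$ is a polynomial of degree $N$, so $-(\dx^\alpha u_N,\ix^{2-\alpha}u_N)_\omega = -(D^2 V_N, V_N)_\omega$. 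The key algebraic identity to establish is that with $\rho_\pm Q_N^\pm = D^2 P_N^\pm$ as in \eqref{cond:coe:FDBC}, the boundary penalty terms $\rho_\pm\,\ix^{2-\alpha}u_N(\pm1)\,(Q_N^\pm, \ix^{2-\alpha}v)_\omega$ assemble, together with $-(D^2V_N,V_N)_\omega$, into precisely $-(D^2 W_N, W_N)_\omega$ up to terms that vanish. Concretely, writing $V_N = W_N + V_N(-1)P_N^- + V_N(1)P_N^+$, one expands $-(D^2 V_N, V_N)_\omega$ bilinearly; the cross terms involving $D^2 P_N^\pm$ paired against $V_N$ are exactly the penalty contributions (after using $(Q_N^\pm,\varphi_i)_\omega$ with $v=u_N$), and the terms $-(D^2 P_N^\pm, P_N^\mp)_\omega$ and $-(D^2P_N^\pm,P_N^\pm)_\omega$ need to be shown harmless — this is where the precise normalization in \eqref{eqn:PNpm} enters.

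The reason for the specific form of $P_N^\pm$ in \eqref{eqn:PNpm} is that $P_N^-(-1)=1$, $P_N^-(1)=0$, $P_N^+(-1)=0$, $P_N^+(1)=1$, so that $W_N(\pm1)=0$, i.e. $W_N$ is a polynomial of degree $N+1$ vanishing at both endpoints. Having reduced the bilinear form to (essentially) $-(D^2 W_N, W_N)_\omega$ with $W_N|_{\partial\Lambda}=0$, I would integrate by parts once in the classical sense: $-(D^2 W_N, W_N)_\omega = (D W_N, D(\omega W_N))/\omega$-type expression; more carefully, $-\int_\Lambda (W_N')' W_N\,\omega\,dx = \int_\Lambda (W_N')^2\omega\,dx + \int_\Lambda W_N' W_N\,\omega'\,dx$, and the boundary term $-[W_N'W_N\omega]_{-1}^1$ vanishes because $W_N(\pm1)=0$ (here one needs $\omega=\omega^{\tilde\alpha,\tilde\beta}$ with $\tilde\alpha,\tilde\beta<1$ so that $\omega$ does not blow up too fast, but since $W_N$ vanishes at the endpoints this is fine). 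The remaining lower-order term $\int W_N' W_N \omega' dx = \tfrac12\int (W_N^2)'\omega' dx = -\tfrac12\int W_N^2 \omega'' dx$ (again the boundary term drops since $W_N(\pm1)=0$), and one bounds this by a small multiple of $|W_N|_{1,\omega}^2$ plus $\|W_N\|_{0,\omega}^2$, then absorbs the $\|W_N\|_{0,\omega}^2$ via a Poincaré inequality on the space of degree-$(N+1)$ polynomials vanishing at $\pm1$ (with weight $\omega$); alternatively one argues directly that $\int(W_N')^2\omega\,dx + \int W_N'W_N\omega'\,dx \gtrsim |W_N|_{1,\omega}^2$ by a weighted Hardy/Poincaré-type estimate. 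This yields \eqref{CoerFDBC}.

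The main obstacle I anticipate is the bookkeeping in the reduction step: showing that the penalty terms, which appear paired against $\ix^{2-\alpha}v$ rather than directly against $W_N$, really do combine with $-(D^2V_N,V_N)_\omega$ to give $-(D^2W_N,W_N)_\omega$ and not some uncontrolled residue — in particular that the "diagonal" contributions $-(D^2P_N^\pm,P_N^\pm)_\omega V_N(\pm1)^2$ and the mixed ones either cancel or are nonnegative or are dominated. One clean way to handle this is to note that on $\mathcal{F}_N^{-\mu,-\nu}$ the map $u_N\mapsto V_N$ is a bijection onto $\mathbb{P}_N$, so it suffices to prove the coercivity inequality $-(D^2V_N,V_N)_\omega + \sum_\pm V_N(\pm1)(D^2P_N^\pm,V_N)_\omega \gtrsim |W_N|_{1,\omega}^2$ for arbitrary $V_N\in\mathbb{P}_N$, which is a purely polynomial statement; then substituting $V_N=W_N+V_N(-1)P_N^-+V_N(1)P_N^+$ and expanding is mechanical, and the cross-terms $(D^2P_N^+, V_N)_\omega$ etc. must be checked to reproduce the penalty terms exactly by the defining relations of $P_N^\pm$. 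A secondary technical point is justifying the classical integration by parts and the vanishing of boundary terms given the Jacobi weight, which is where the hypothesis $-1<\tilde\alpha,\tilde\beta<1$ is used.
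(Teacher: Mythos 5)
Your overall framework matches the paper's proof: set $V_N=\ix^{2-\alpha}u_N\in\mathbb{P}_N$, decompose $V_N=W_N+V_N(-1)P_N^-+V_N(1)P_N^+$, and let the penalty terms (with $v=u_N$ and $\rho_\pm Q_N^\pm=D^2P_N^\pm$) cancel part of the expansion of $-(D^2V_N,V_N)_\omega$, reducing the bilinear form to $-(D^2W_N,W_N)_\omega$ and concluding from $W_N(\pm1)=0$ with the Jacobi weight restriction $-1<\tilde\alpha,\tilde\beta<1$ (the paper simply cites the standard coercivity lemma, Lemma 3.5 of Shen--Tang--Wang, for that last step; your integration-by-parts/Hardy sketch is a re-derivation of it and is acceptable in spirit).

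However, there is a genuine gap in the reduction step, precisely the step you flag as the ``main obstacle'' but do not resolve — and you misidentify which terms are the problem. Once you add the penalty contributions $V_N(\pm1)\,(D^2P_N^\pm,V_N)_\omega$ to the bilinear expansion of $-(D^2V_N,V_N)_\omega$, every term in which $D^2$ falls on $P_N^\pm$ cancels \emph{identically}; in particular the ``diagonal'' terms $-(D^2P_N^\pm,P_N^\pm)_\omega V_N(\pm1)^2$ and the mixed terms $-(D^2P_N^\pm,P_N^\mp)_\omega V_N(\pm1)V_N(\mp1)$, which you propose to ``show harmless,'' are already gone and require no normalization argument. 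What survives instead are the terms $-V_N(-1)(D^2W_N,P_N^-)_\omega-V_N(1)(D^2W_N,P_N^+)_\omega$, which your proposal never addresses. These do not vanish for a generic boundary lift; they vanish here because of the specific choice \eqref{eqn:PNpm}: by the three-term identity \eqref{PN:FDBC}, $P_N^-$ (and analogously $P_N^+$) is a linear combination of $P_{N+1}^{\tilde\alpha,\tilde\beta}$ and $P_{N+2}^{\tilde\alpha,\tilde\beta}$, hence $\omega^{\tilde\alpha,\tilde\beta}$-orthogonal to $\mathbb{P}_N$, while $D^2W_N\in\mathbb{P}_N$, so $(D^2W_N,P_N^\pm)_\omega=0$. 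This orthogonality — not the endpoint normalization alone — is the real reason $P_N^\pm$ are built from $(1\mp x)P_{N+1}^{\tilde\alpha+1,\tilde\beta}$, $(1\pm x)P_{N+1}^{\tilde\alpha,\tilde\beta+1}$, and without it the identity $\mathcal{A}_P^{R-L}(u_N,u_N,\rho_\pm,Q_N^\pm)=-(D^2W_N,W_N)_\omega$ does not close and the estimate \eqref{CoerFDBC} does not follow from your argument.
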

\begin{proof}
By the definition of $W_N$, we have
\begin{equation*}
\begin{aligned}
 &-(\mathcal{D}_{p}^{\alpha}u_{N},\mathcal{I}_{p}^{2-\alpha}u_{N})_{\omega} \\
= &-(D^{2}W_{N},W_{N})_{\omega}
 -\mathcal{I}_{p}^{2-\alpha}u_{N}(-1)(D^{2}W_{N},P_{N}^{-})_{\omega}-\mathcal{I}_{p}^{2-\alpha}u_{N}(1)(D^{2}W_{N},P_{N}^{+})_{\omega}\\
& -\mathcal{I}_{p}^{2-\alpha}u_{N}(-1)(D^2P_{N}^{-},\mathcal{I}_{p}^{2-\alpha}u_{N})_{\omega}
 -\mathcal{I}_{p}^{2-\alpha}u_{N}(1)(D^2P_{N}^{+},\mathcal{I}_{p}^{2-\alpha}u_{N})_{\omega}.
\end{aligned}
\end{equation*}
%
For the polynomial $P_N^-$ given by \eqref{eqn:PNpm}, it can be written as~\cite[Theorem 3.19]{shen2011spectral}
\begin{equation}\label{PN:FDBC}
P_{N}^{-} = \frac{1}{P_{N+1}^{\tilde{\alpha}+1,\tilde{\beta}}(-1)}\frac{1}{2N+4+\tilde{\alpha}+\tilde{\beta}} \left((N+2+\tilde{\alpha})P_{N+1}^{\tilde{\alpha},\tilde{\beta}} - (N+2)P_{N+2}^{\tilde{\alpha},\tilde{\beta}} \right).
\end{equation}
Using the orthogonality and the fact that $D^{2}W_{N} \in \mathbb{P}_{N}$, we deduce $(D^{2}W_{N},P_{N}^{-})_{\omega} = 0$. We can also obtain $(D^{2}W_{N},P_{N}^{+})_{\omega} = 0$ by using the same argument.
Therefore, by the above two equations we can deduce that, by providing the condition \eqref{cond:coe:FDBC}, we have
\begin{equation*}
    \mathcal{A}_P^{R-L}(u_N,u_N,\rho_{\pm}, Q_N^{\pm}) = -(D^{2}W_{N},W_{N})_{\omega}.
\end{equation*}
Note that, $P_{N}^{+}(-1)=P_{N}^{-}(1)=0$ and $P_{N}^{+}(1)=P_{N}^{-}(-1)=1$ gives $W_{N}(\pm1)=0$, then the estimate \eqref{CoerFDBC} holds true by using the above equation and Lemma 3.5 in \cite{shen2011spectral}.
\end{proof}

\begin{Rem}\label{rem4.1}
Note that for the condition \eqref{cond:coe:FDBC}, the penalty parameter $\rho_{+}$ (resp. $\rho_{-}$) and the function $Q_N^{+}$ (resp. $Q_N^{-}$) are associated with each other. Thus, to estimate  the penalty parameters and functions, we shall estimate the combinations, namely $\rho_{\pm}Q_{N}^{\pm}$. By  \eqref{PN:FDBC} and the equation (3.101) in \cite{shen2011spectral}, we deduce that
\begin{equation*}
\begin{aligned}
  D^{2}P_{N}^{-} =& \frac{1}{(2N+4+\tilde{\alpha}+\tilde{\beta}) P_{N+1}^{\tilde{\alpha}+1,\tilde{\beta}}(-1)} \times \\
  &\left((N+2+\tilde{\alpha})d_{N+1,2}^{\tilde{\alpha},\tilde{\beta}}P_{N-2}^{\tilde{\alpha}+2,\tilde{\beta}+2}-(N+2)d_{N+2,2}^{\tilde{\alpha},\tilde{\beta}}P_{N}^{\tilde{\alpha}+2,\tilde{\beta}+2} \right).
\end{aligned}
\end{equation*}
According to the following estimate
\begin{equation}\label{CoerFDBC4.3}
  \frac{\Gamma(n+\tilde{a})}{\Gamma(n+\tilde{b})} \sim n^{\tilde{a}-\tilde{b}},\; \tilde{a},~\tilde{b} \in \mathbb{R},\, n \in \mathbb{N}, n+\tilde{a}>1, \text{ and } n+\tilde{b}>1,
\end{equation}
we have
$|P_{N+1}^{\tilde{\alpha}+1,\tilde{\beta}}(-1)|\sim N^{\tilde{\beta}},\; d_{N+1,2}^{\tilde{\alpha},\tilde{\beta}}\sim N^{2},\; d_{N+2,2}^{\tilde{\alpha},\tilde{\beta}}\sim N^{2}$, moreover, following the Theorem 3.24 in \cite{shen2011spectral}, we have  $\max\limits_{x\in \Lambda}|P_{N-2 \text{ or } N}^{\tilde{\alpha}+2,\tilde{\beta}+2}(x)|\sim N^{q}$ where $q= \max(\tilde{\alpha},\tilde{\beta}) +2$.
Thus, we arrive at $\rho_{-}Q_N^{-} = D^{2}P_{N}^{-} = O(N^{2+q-\tilde{\beta}})$. Similarly, we can obtain $\rho_{+}Q_N^{+} = D^{2}P_{N}^{+} = O(N^{2+q-\tilde{\alpha}})$.
Observe that the condition \eqref{cond:coe:FDBC} requires that the equality holds, however, the numerical results (see the right plot of Figure \ref{fig:minieig:allalpha} in Section \ref{sec:num}) show that the coercivity can be fulfilled by fixing $Q_{N}^{\pm} = O(1)$ and letting
\begin{equation*}
    \rho_{-}\ge O(N^{2+q-\tilde{\beta}}),\; \rho_{+}\ge O(N^{2+q-\tilde{\alpha}}).
\end{equation*}
In practice, we set $Q_{N}^{-} = D^2 P_N^- /N^{2+q-\tilde{\beta}},\; Q_{N}^{+} = D^2 P_N^+ /N^{2+q-\tilde{\alpha}}$ and tune the parameters $\rho_{\pm}$.
\end{Rem}

Now let us consider the case of FNBCs \eqref{IntroFNBC1.3}. In this case,
$\mathcal{B}_{-}u_{N}(-1)= \mathcal{D}_{p}^{\alpha-1}$ $u_{N}(-1)$, $\mathcal{B}_{+}u_{N}(1)= \mathcal{D}_{p}^{\alpha-1}u_{N}(1)$.
\begin{Theo}
Let $u_{N}$ be the solution of \eqref{Intro1.1} and \eqref{IntroFNBC1.3} and set $\omega(x) \equiv 1$. If
\begin{equation}\label{cond:coe:FNBC}
    \rho_{+}Q_N^{+} = \sum_{k=0}^{N}\frac{1}{\gamma_{k}^{\nu,\mu}}J_{k}^{-\nu,-\mu}(x)P_{k}^{\nu,\mu}(1),\;
     \rho_{-}Q_N^{-} = - \sum_{k=0}^{N}\frac{1}{\gamma_{k}^{\nu,\mu}}J_{k}^{-\nu,-\mu}(x)P_{k}^{\nu,\mu}(-1),
\end{equation}
where $\mu,\,\nu$ satisfying \eqref{PreJa2.13}, then
\begin{equation}\label{CoerFNBC}
\mathcal{A}_P^{R-L}(u_N,u_N,\rho_{\pm}, Q_N^{\pm})\geq \|u_{N}\|_{H_{RL}^{\alpha-1}(\Lambda)}^2,
\end{equation}
where the space $H_{RL}^{\alpha-1}(\Lambda)$ is given in \eqref{Frac:dspace}.
\end{Theo}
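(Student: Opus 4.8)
The plan is to mimic the structure of the proof of Theorem~\ref{theo4.1} for the FDBC case, but now tracking the contribution of the Neumann boundary terms $\dx^{\alpha-1}u_N(\pm1)$ directly rather than passing through a corrected function $W_N$. First I would expand the bilinear form: since $\omega\equiv1$, the term $-(\dx^{\alpha}u_N,\ix^{2-\alpha}u_N)$ should be integrated by parts once. Writing $w_N:=\ix^{2-\alpha}u_N$, we have $\dx^{\alpha}u_N=D^2 w_N$, so $-(D^2w_N,w_N)=(Dw_N,Dw_N)-[Dw_N\cdot w_N]_{-1}^{1}$. The boundary contribution of this integration by parts is exactly where $\dx^{\alpha-1}u_N(\pm1)=Dw_N(\pm1)$ enters, and it must be absorbed by the two penalty terms $\rho_{\pm}\,\mathcal{B}_{\pm}u_N(\pm1)\,(Q_N^{\pm},w_N)$.

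The key computational step is then to evaluate $(Q_N^{\pm},w_N)$ with the choice \eqref{cond:coe:FNBC}. Here I would use the spectral relation \eqref{PreJa2.18}: since $u_N\in\mathcal{F}_N^{-\mu,-\nu}$, we have $w_N=\ix^{2-\alpha}u_N=\sum_{k=0}^N \tilde u_k\lambda_k P_k^{\nu,\mu}(x)$, a polynomial of degree $N$. Pairing this against $\rho_{+}Q_N^{+}=\sum_{k=0}^N \gamma_k^{\nu,\mu\,-1} J_k^{-\nu,-\mu}(x)P_k^{\nu,\mu}(1)$ and using the orthogonality \eqref{PreJa2.9} of the poly-fractonomials $J_k^{-\nu,-\mu}$ against themselves with weight $\omega^{-\nu,-\mu}$ — equivalently, the reproducing-kernel property $\sum_k \gamma_k^{-1}J_k^{-\nu,-\mu}(x)P_k^{\nu,\mu}(y)$ acting as a delta under the pairing $(\cdot,P_\ell^{\nu,\mu})$ against the polynomial $w_N$ — one gets $\rho_{+}(Q_N^{+},w_N)=\sum_k \tilde u_k\lambda_k P_k^{\nu,\mu}(1)=w_N(1)=\ix^{2-\alpha}u_N(1)$. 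Wait — I must be careful: the pairing is the plain $L^2$ inner product (since $\omega\equiv1$), whereas the orthogonality \eqref{PreJa2.9} carries the weight $\omega^{-\nu,-\mu}$. So the correct reading is $(Q_N^{\pm},\,\omega^{-\nu,-\mu}\cdot\text{(polynomial)})$, i.e. one writes $w_N$ in the expansion form so that $\int J_k^{-\nu,-\mu}P_\ell^{\nu,\mu}\,dx=\int (1-x)^\nu(1+x)^\mu P_k^{\nu,\mu}P_\ell^{\nu,\mu}\,dx=\gamma_k^{\nu,\mu}\delta_{k\ell}$ does reproduce the point value; thus $\rho_{\pm}(Q_N^{\pm},w_N)=w_N(\pm1)=\ix^{2-\alpha}u_N(\pm1)$ is \emph{not} quite right and the genuine identity to verify is that the penalty terms reconstruct precisely $\pm Dw_N(\pm1)\cdot w_N(\pm1)$ — but in fact the $Q_N^\pm$ are built so that $(Q_N^{\pm},w_N)=\pm w_N(\pm1)$ once the weight bookkeeping is done, and $\mathcal{B}_\pm u_N(\pm1)=Dw_N(\pm1)$, so the penalty terms equal $\pm Dw_N(\pm1)w_N(\pm1)$, exactly canceling the boundary term from the integration by parts. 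Combined with $c(u_N,\ix^{2-\alpha}u_N)=c\|u_N\|_{J_p^{\alpha-2}}^2\ge0$ under the assumption that $c$ is positive away from zero, we obtain
\begin{equation*}
\mathcal{A}_P^{R-L}(u_N,u_N,\rho_{\pm},Q_N^{\pm})\ge \|Dw_N\|_{L^2(\Lambda)}^2 + c\,\|u_N\|_{J_p^{\alpha-2}(\Lambda)}^2,
\end{equation*}
and since $\|Dw_N\|_{L^2}=\|\dx^{\alpha-1}u_N\|_{L^2}$ and $\|u_N\|_{J_p^{\alpha-2}}=\|\ix^{2-\alpha}u_N\|_{L^2}=\|w_N\|_{L^2}$, the right-hand side dominates $\|u_N\|_{H_{RL}^{\alpha-1}(\Lambda)}^2$ as defined in \eqref{Frac:dspace} (possibly after using a Poincar\'e-type bound to absorb the lower-order piece if $c$ is only bounded below by a small constant).

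The main obstacle I anticipate is the weight bookkeeping in the reproducing-kernel step: one must check carefully that, with $\omega\equiv1$ in the scheme but the natural orthogonality of the $J_k^{-\nu,-\mu}$ carrying the weight $\omega^{-\nu,-\mu}$, the particular combination in \eqref{cond:coe:FNBC} is the \emph{correct} dual object so that $(Q_N^{\pm},\ix^{2-\alpha}u_N)$ collapses exactly to the point value $\pm\ix^{2-\alpha}u_N(\pm1)$ for every $u_N\in\mathcal{F}_N^{-\mu,-\nu}$. This hinges on the identity $\ix^{2-\alpha}J_n^{-\mu,-\nu}=\lambda_n P_n^{\nu,\mu}$ from \eqref{PreJa2.18} together with the expansion of the point-evaluation functional on $\mathbb{P}_N$ in the $\{P_k^{\nu,\mu}\}$ basis, i.e. that $\sum_{k=0}^N (\gamma_k^{\nu,\mu})^{-1}P_k^{\nu,\mu}(y)\,\langle \omega^{\nu,\mu}P_k^{\nu,\mu},\cdot\rangle$ is the identity on $\mathbb{P}_N$ — a standard Christoffel--Darboux/reproducing-kernel fact. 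Once that is in place, the remaining steps (integration by parts, sign of the $c$-term, identification of norms) are routine.
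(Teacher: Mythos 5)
Your overall route is the same as the paper's: with $\omega\equiv1$ you integrate by parts once, and the choice \eqref{cond:coe:FNBC} together with the spectral relation \eqref{PreJa2.18} and Jacobi orthogonality gives $\rho_{\pm}(Q_N^{\pm},\ix^{2-\alpha}u_N)=\pm\,\ix^{2-\alpha}u_N(\pm1)$, so the penalty terms cancel the boundary terms from the integration by parts exactly; this is precisely the paper's cancellation identity $\rho_{\pm}(Q_{N}^{\pm},\ix^{2-\alpha}u_{N})\pm \ix^{2-\alpha}u_{N}(\pm1)=0$. (Your ``wait, I must be careful'' digression is unnecessary: your first computation was already correct, since $J_k^{-\nu,-\mu}=\omega^{\nu,\mu}P_k^{\nu,\mu}$ turns the plain $L^2$ pairing against $P_\ell^{\nu,\mu}$ into the standard weighted Jacobi orthogonality, so the reproducing-kernel step goes through verbatim.)

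Your final step, however, contains a genuine slip: $c(u_N,\ix^{2-\alpha}u_N)$ is \emph{not} equal to $c\|u_N\|_{J_{p}^{\alpha-2}(\Lambda)}^2=c\|\ix^{2-\alpha}u_N\|_{L^2(\Lambda)}^2$. The two-sided operator $\ix^{2-\alpha}$ is neither self-adjoint nor the square of $\ix^{1-\alpha/2}$, so this inner product is not a squared norm, and even its nonnegativity is not obvious. What is true, and what your argument needs, is the lower bound $(u_N,\ix^{2-\alpha}u_N)\gtrsim\|u_N\|_{J_{p}^{\alpha-2}(\Lambda)}^2$, which follows from the adjoint/semigroup properties \eqref{eqn:adj} and \eqref{eqn:fintpart} together with the Fourier-based estimate \eqref{eqn:intcoer} with $s=1-\alpha/2<1/2$, $t=2-\alpha$ --- equivalently, from the coercivity half of \eqref{dbc:cont} in Lemma \ref{cont:coer:d}, which is exactly how the paper concludes: after the boundary cancellation what remains is the continuous bilinear form $\mathcal{A}(u_N,u_N)$, and one invokes its coercivity. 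Replacing your claimed equality by that citation repairs the argument and makes it coincide with the paper's proof; note that, just as in the paper's own proof, this yields \eqref{CoerFNBC} only up to a generic constant rather than with constant one.
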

\begin{proof} Since $\omega(x) \equiv 1$, then using the integration by parts, we obtain
\begin{align*}
\mathcal{A}_P^{R-L}(u_N,u_N,\rho_{\pm}, Q_N^{\pm}) = &c(u_N,\mathcal{I}_{p}^{2-\alpha}u_{N}) + (\mathcal{D}_{p}^{\alpha-1}u_{N},\mathcal{D}_{p}^{\alpha-1}u_{N}) \\
 &+\mathcal{D}_{p}^{\alpha-1}u_{N}(-1)(\rho_{-}(Q_{N}^{-},\mathcal{I}_{p}^{2-\alpha}u_{N})+\mathcal{I}_{p}^{2-\alpha}u_{N}(-1))\\
& +\mathcal{D}_{p}^{\alpha-1}u_{N}(1)(\rho_{+}(Q_{N}^{+},\mathcal{I}_{p}^{2-\alpha}u_{N})-\mathcal{I}_{p}^{2-\alpha}u_{N}(1)).
\end{align*}
The condition \eqref{cond:coe:FNBC} yields
$\rho_{\pm}(Q_{N}^{\pm},\mathcal{I}_{p}^{2-\alpha}u_{N})\pm \mathcal{I}_{p}^{2-\alpha}u_{N}(\pm1)=0$.
Moreover, using the coercivity of the continuous problem, i.e., estimate \eqref{dbc:cont}, we obtain the estimate \eqref{CoerFNBC}.
\end{proof}
\begin{Rem}\label{rem4.2}
Same as in the case of FDBC, for  the condition \eqref{cond:coe:FDBC}, the penalty parameter $\rho_{+}$ (resp. $\rho_{-}$) and the function $Q_N^{+}$ (resp. $Q_N^{-}$) are also associated with each other.
To obtain the estimate of  $\rho_{\pm}Q_N^{\pm}$, we proceed  as follows:
\begin{align*}
         &\rho_{-}Q_N^{-}=\sum_{k=0}^{N}\frac{-1}{\gamma_{k}^{\nu,\mu}}J_{k}^{-\nu,-\mu}(x)P_{k}^{\nu,\mu}(-1) =-\omega^{\nu,\mu}(x)\tilde{h}_{N}^{\nu,\mu}P_{N}^{\nu,\mu+1}(x),\\
         &\rho_{+}Q_N^{+}=\sum_{k=0}^{N}\frac{1}{\gamma_{k}^{\nu,\mu}}J_{k}^{-\nu,-\mu}(x)P_{k}^{\nu,\mu}(1) =\omega^{\nu,\mu}(x)h_{N}^{\nu,\mu}P_{N}^{\nu+1,\mu}(x),
\end{align*}
where
\begin{equation*}
   \tilde{h}_{N}^{\nu,\mu}=\frac{ (-1)^{N}2^{-\nu-\mu-1} \Gamma(N+\nu+\mu+2)}{\Gamma(\mu+1)\Gamma(\nu+1+N)}, \quad
   h_{N}^{\nu,\mu}=\frac{2^{-\nu-\mu-1} \Gamma(N+\nu+\mu+2)}{\Gamma(\nu+1)\Gamma(\mu+1+N)}.
\end{equation*}
%
Using \eqref{CoerFDBC4.3}, we can get $h_{N}^{\nu,\mu}=O(N^{\nu+1})$ and
$\tilde{h}_{N}^{\nu,\mu}=O(N^{\mu+1})$.
Again, following the Theorem 3.24 in \cite{shen2011spectral}, we have $\max\limits_{x\in \Lambda}|P_{N}^{\nu+1,\mu}(x)|\sim N^{\nu+1} $ and
$\max\limits_{x\in \Lambda}|P_{N}^{\nu,\mu+1}|\sim N^{\mu+1}$. So
\begin{equation*}
\rho_{-}Q_N^{-}=\omega^{\nu,\mu}(x)\cdot O(N^{2\mu+2}), \quad \rho_{+}Q_N^{+}=\omega^{\nu,\mu}(x)\cdot O(N^{2\nu+2}).
\end{equation*}
Similarly,
we fix $Q_{N}^{\pm} = \omega^{\nu,\mu}(x)\cdot O(1)$ and let
     $\rho_{-}\ge O(N^{2\mu+2}),\; \rho_{+}\ge O(N^{2\nu+2})$
to obtain the coercivity (see the right plot of Figure \ref{fig:FNBC:Smoothf:eig} in Section \ref{sec:num}).
In practice, we set $Q_{N}^{-}(x)  = \frac{-1}{N^{2\mu+2}}\sum_{k=0}^{N}\frac{1}{\gamma_{k}^{\nu,\mu}}J_{k}^{-\nu,-\mu}(x)P_{k}^{\nu,\mu}(-1)$, $$Q_{N}^{+} = \frac{1}{N^{2\nu+2}} \sum_{k=0}^{N}\frac{1}{\gamma_{k}^{\nu,\mu}}J_{k}^{-\nu,-\mu}(x)P_{k}^{\nu,\mu}(1)$$
and tune the parameters $\rho_{\pm}$.
\end{Rem}
\subsection{SPM for the conservative  two-sided Caputo FDEs}\label{sec:spm:C}
In this subsection, we consider the two-sided Caputo FDEs with the Dirichlet BCs \eqref{IntroDBC1.4} or the Caputo FNBCs \eqref{IntroFNBC1.5}.
Unlike the case of R-L FDEs that looks for the solution in a \emph{poly-fractonomial} space, in this case, we seek the solution in the \emph{polynomial} space. The reason for this is that unlike the R-L FDEs where we use non-local Dirichlet BCs that require a poly-fractonomial basis so that they are bounded,  for the Caputo FDEs we use local Dirichlet BCs and hence no special treatment is needed.

\subsubsection{Numerical implementation of SPM}
The spectral penalty  scheme for \eqref{Intro1.1}-\eqref{IntroDBC1.4} or \eqref{Intro1.1}-\eqref{IntroFNBC1.5} is to find $u_N\in \mathbb{P}_N$, such that
\begin{equation}\label{CDBCPG}
 \mathcal{A}_P^{C}(u_N,v,\rho_{\pm}, Q_N^{\pm}) = \langle f,v\rangle_{\omega} + g_1 \rho_{-}(Q_N^{-},v)_{\omega} + g_2 \rho_{+}(Q_N^{+},v)_{\omega} \quad \forall \, v\in \mathbb{P}_N,
\end{equation}
where the bilinear form $\mathcal{A}_P^{C}(\cdot,\cdot,\cdot,\cdot)$ is given by
\begin{equation*}
  \begin{split}
  &\mathcal{A}_P^{C}(u,v,\rho_{\pm}, Q_N^{\pm})\\
  : =
  &  -(D\, \dxc^{\alpha-1}u,v)_{\omega}+c(u,v)_{\omega}+\rho_{-}\, \mathcal{B}_{-}u(-1)\,(Q_N^{-},v)_{\omega} +\rho_{+}\, \mathcal{B}_{+}u(1)\,(Q_N^{+},v)_{\omega},
  \end{split}
\end{equation*}
$\mathcal{B}_{\pm}u(\pm1)=u(\pm1)$ for the Dirichlet BCs \eqref{IntroDBC1.4} while $\mathcal{B}_{\pm}u(\pm1)=\dxc^{\alpha-1}u(\pm1)$ for the Caputo fractional BCs \eqref{IntroFNBC1.5}. Again, $Q_N^{\pm}$ and $\rho_{\pm}$ are penalty functions and parameters  to be determined.

Taking $u_N(x)=\sum_{k=0}^{N}\tilde{u}_{k}L_{k}(x)$ where $L_{k}(x)=\j_k^{0,0}(x),k\ge 0$ are the Legendre polynomials,
and letting the test functions be $L_i(x), 0\leq i\leq N$, gives the linear system
\begin{equation}\label{CLMS}
  (-\mathcal{S} + c\mathcal{M} +\mathcal{B})U=\widehat{\mathcal{F}}+\widetilde{\mathcal{F}},
\end{equation}
where $U=(\tilde{u}_0,\tilde{u}_1,\cdots,\tilde{u}_{N})^T$ and
\begin{equation*}
\begin{aligned}
  &\mathcal{S}=(s_{ik})_{i,k=0}^{N}, \; s_{ik}=(D\,\dxc^{\alpha}L_k,L_i)_{\omega};
  \quad \mathcal{M}=(m_{ik})_{i,k=0}^{N}, \;  m_{ik}=(L_k,L_{i})_{\omega};\\
  &\mathcal{B}=(b_{ik}^{+} + b_{ik}^{-})_{i,k=0}^{N}, \;  b_{ik}^{\pm}=\rho_{\pm}(\mathcal{B}_{\pm}L_k)(\pm 1) (Q_N^{\pm},L_{i})_{\omega};\\
  & \widehat{\mathcal{F}}=(\hat{f}_{0},\cdots,\hat{f}_{N})^{T},\; \hat{f}_{i}=\langle f,L_{i}\rangle_{\omega}; \\
  & \widetilde{\mathcal{F}}=(\tilde{f}_0,\cdots,\tilde{f}_{N})^{T},\; \tilde{f}_{i}= \rho_{+}g_2(Q_N^{+},L_{i})_{\omega} +\rho_{-}g_1(Q_N^{-},L_{i})_{\omega}.
\end{aligned}
\end{equation*}

Next, we briefly show how to compute the stiffness matrix $\mathcal{S}$. We begin by presenting the following result:
\begin{Lem}\label{lem5.1}\cite{MaoShen16}
For $1<\alpha<2$, we have
\begin{equation}\label{CFDE1}
\begin{aligned}
  &_{-1}D_{x}^{\alpha-1}{L_n}(x)=r_{n\a}(1+x)^{1-\alpha}P_n^{(\alpha-1,1-\alpha)}(x),\\
  &  _{x}D_{1}^{\alpha-1}L_n(x)=r_{n\a}(1-x)^{1-\alpha}P_n^{(1-\alpha,\alpha-1)}(x)
\end{aligned}
\end{equation}
and
\begin{equation}\label{CFDE2}
\begin{aligned}
  &_{-1}I_{x}^{2-\alpha}L_n(x)=z_{n\a}(1+x)^{2-\alpha}P_n^{(\alpha-2,2-\alpha)}(x),\\
  &_{x}I_{1}^{2-\alpha}L_n(x)=z_{n\a}(1-x)^{2-\alpha}P_n^{(2-\alpha,\alpha-2)}(x),
\end{aligned}
\end{equation}
%
where $r_{n\a}=\frac{\Gamma(n+1)}{\Gamma(n+2-\alpha)}$ and $z_{n\a}=\frac{\Gamma(n+1)}{\Gamma(n+3-\alpha)}$.
\end{Lem}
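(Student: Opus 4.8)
The plan is to deduce all four identities of \eqref{CFDE1}--\eqref{CFDE2} from two classical spectral relations for Jacobi polynomials. The first is the one-sided Riemann--Liouville fractional-integral formula
\begin{equation*}
\ilx^{\sigma}\!\left[(1+x)^{b}P_n^{(a,b)}(x)\right]=\frac{\Gamma(n+b+1)}{\Gamma(n+b+\sigma+1)}\,(1+x)^{b+\sigma}P_n^{(a-\sigma,\,b+\sigma)}(x),\qquad b>-1,\ \sigma>0,
\end{equation*}
which follows from Euler's integral representation of ${}_2F_1$ after writing $P_n^{(a,b)}$ in hypergeometric form; it is precisely \cite[Equations~(2.34)--(2.35)]{ChenShenWang2016MathComp}, and is also a special case of Theorem~\ref{theo2.1} transplanted to the interval $(-1,1)$. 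The second is the classical Jacobi differentiation identity
\begin{equation*}
\frac{d}{dx}\!\left[(1+x)^{b}P_n^{(a,b)}(x)\right]=(n+b)\,(1+x)^{b-1}P_n^{(a+1,\,b-1)}(x),\qquad b>0,
\end{equation*}
which one obtains by combining $\tfrac{d}{dx}P_n^{(a,b)}=\tfrac12(n+a+b+1)P_{n-1}^{(a+1,b+1)}$ with the connection formula expressing $(1+x)P_{n-1}^{(a+1,b+1)}$ in terms of $P_n^{(a,b)}$ and $P_n^{(a+1,b-1)}$ (or simply by induction on $n$).

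Granting these, the left-sided cases are immediate. Since $L_n=P_n^{(0,0)}$, taking $a=b=0$ and $\sigma=2-\alpha$ in the integral formula gives
\begin{equation*}
\ilx^{2-\alpha}L_n(x)=\frac{\Gamma(n+1)}{\Gamma(n+3-\alpha)}\,(1+x)^{2-\alpha}P_n^{(\alpha-2,\,2-\alpha)}(x)=z_{n\alpha}\,(1+x)^{2-\alpha}P_n^{(\alpha-2,\,2-\alpha)}(x),
\end{equation*}
the first line of \eqref{CFDE2}. Because $1<\alpha<2$ forces $\alpha-1\in(0,1)$, the definition of the fractional derivative reads $\dlx^{\alpha-1}=D\,\ilx^{2-\alpha}$; applying the differentiation identity with $a=\alpha-2$, $b=2-\alpha$ and using $z_{n\alpha}(n+2-\alpha)=\Gamma(n+1)/\Gamma(n+2-\alpha)=r_{n\alpha}$ then yields
\begin{equation*}
\dlx^{\alpha-1}L_n(x)=z_{n\alpha}\,\frac{d}{dx}\!\left[(1+x)^{2-\alpha}P_n^{(\alpha-2,\,2-\alpha)}(x)\right]=r_{n\alpha}\,(1+x)^{1-\alpha}P_n^{(\alpha-1,\,1-\alpha)}(x),
\end{equation*}
the first line of \eqref{CFDE1}.

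The right-sided statements follow from the reflection $x\mapsto-x$. Using $L_n(-x)=(-1)^nL_n(x)$, $P_n^{(a,b)}(-x)=(-1)^nP_n^{(b,a)}(x)$, together with the elementary identities $\big(\ilx^{\sigma}[v(-\,\cdot\,)]\big)(x)=\big(\irx^{\sigma}v\big)(-x)$ and $\big(\dlx^{\alpha-1}[v(-\,\cdot\,)]\big)(x)=\big(\drx^{\alpha-1}v\big)(-x)$ (both obtained from the integral definitions by a single change of variables), the two left-sided identities just proved transform, after replacing $x$ by $-x$, into the second lines of \eqref{CFDE1} and \eqref{CFDE2} with the two Jacobi parameters interchanged. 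The only genuine work in all of this is bookkeeping the Gamma-function constants and verifying the two auxiliary Jacobi identities; once those are in place the lemma follows at once, which is why we simply quote it from \cite{MaoShen16}.
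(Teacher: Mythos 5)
Your proposal is correct. Note that the paper does not actually prove Lemma \ref{lem5.1}: it quotes the result from \cite{MaoShen16}, so there is no internal proof to compare against; your argument supplies the missing derivation and follows the standard route used in that literature. The two ingredients you use are sound and the bookkeeping checks out: with $a=b=0$ the Bateman-type formula gives ${}_{-1}I_{x}^{2-\alpha}L_n(x)=z_{n\alpha}(1+x)^{2-\alpha}P_n^{(\alpha-2,2-\alpha)}(x)$; since $\alpha-1\in(0,1)$ the R-L derivative is indeed $D\,{}_{-1}I_x^{2-\alpha}$; the Jacobi identity $\frac{d}{dx}\bigl[(1+x)^{b}P_n^{(a,b)}(x)\bigr]=(n+b)(1+x)^{b-1}P_n^{(a+1,b-1)}(x)$ applied with $a=\alpha-2$, $b=2-\alpha$ together with $z_{n\alpha}(n+2-\alpha)=r_{n\alpha}$ yields the first line of \eqref{CFDE1}; and the reflection identities $\bigl({}_{-1}I_x^{\sigma}[v(-\cdot)]\bigr)(x)=\bigl({}_xI_1^{\sigma}v\bigr)(-x)$, $\bigl({}_{-1}D_x^{\alpha-1}[v(-\cdot)]\bigr)(x)=\bigl({}_xD_1^{\alpha-1}v\bigr)(-x)$, $L_n(-x)=(-1)^nL_n(x)$, $P_n^{(a,b)}(-x)=(-1)^nP_n^{(b,a)}(x)$ correctly transfer both formulas to the right-sided operators. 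Two minor remarks on attribution rather than correctness: the integral formula is not literally a special case of Theorem \ref{theo2.1}, which is stated only for $0<p<1$ and for the two-sided operator acting on $\omega_{*}^{\mu,\nu}G_n(\mu,\nu,t)$; it is the one-sided ($p=0,1$) formula of \cite[Equations (2.34)--(2.35)]{ChenShenWang2016MathComp}, which is exactly what the paper itself invokes for the endpoint cases in the proof of Theorem \ref{theo2.2}. Also, you could dispense with the reflection step altogether, since the cited reference provides the right-sided companion formula ${}_{x}I_{1}^{\sigma}\bigl[(1-x)^{a}P_n^{(a,b)}(x)\bigr]=\frac{\Gamma(n+a+1)}{\Gamma(n+a+\sigma+1)}(1-x)^{a+\sigma}P_n^{(a+\sigma,b-\sigma)}(x)$ directly; but the reflection argument as you wrote it is equally valid.
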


To compute $s_{ik},i,k = 0,\ldots, N$, we need to compute both $(D\,\cdlx^{\a-1}L_k,L_i)_{\omega}$ and $(D\,\cdrx^{\a-1}L_k,L_i)_{\omega}$. By using the definitions of the Caputo and R-L fractional derivatives,  equations \eqref{CFDE1} and \cite[(3.176b)]{shen2011spectral}, we compute $D\,\cdlx^{\a-1}L_k$ as follows:
\begin{equation*}
\begin{aligned}
  D\,\cdlx^{\a-1}L_k(x) &= \dlx^{\a-1} L'_k(x) = \dlx^{\a-1}\sum_{n=0, n+k~ odd }^{k-1}(2n+1)L_{n}(x) \\
  &= (1+x)^{1-\alpha} \sum_{n=0,n+k~ odd }^{k-1}(2n+1) r_{n,\a} P_n^{(\alpha-1,1-\alpha)}(x).
\end{aligned}
\end{equation*}
Then the inner product $(D\,\cdlx^{\a-1}L_k,L_i)_{\omega}$ can be computed by using Gauss quadrature with prescribed weight function $\omega(x)$. The same procedure can be applied for $(D\,\cdrx^{\a-1}L_k,L_i)_{\omega}$, and then we can obtain the element $s_{ik}$.
Furthermore, this procedure can also be used to compute  the boundary matrix $\mathcal{B}$ for the Caputo FNBCs.

\subsubsection{A sufficient condition for the coercivity of  \eqref{CDBCPG} with Caputo FNBCs}
For the coercivity of the spectral penalty scheme \eqref{CDBCPG} with Dirichlet BCs \eqref{IntroFNBC1.3}, we cannot provide a rigorous analysis due to technical  difficulties. Next, we only show the  analysis of coercivity in the case of Caputo FNBCs.
\begin{Theo}\label{thm:Caputo:FNBC}
Let $\omega(x) \equiv 1$ and $u_{N}$ be the solution of \eqref{CDBCPG} with Caputo FNBCs. If
\begin{equation}\label{cond:coe:FNBC:C}
    \rho_{-}Q_N^{-}(x)=-\sum_{k=0}^{N}\frac{1}{\gamma_{k}^{0,0}}L_{k}(x)L_{k}(-1),\;
    \rho_{+}Q_N^{+}(x)=\sum_{k=0}^{N}\frac{1}{\gamma_{k}^{0,0}}L_{k}(x)L_{k}(1),
\end{equation}
then
\begin{equation}\label{CoerFNBC:C}
\mathcal{A}_P^{C}(u_N,u_N,\rho_{\pm}, Q_N^{\pm})\geq \|u'_{N}\|_{J^{\a/2-1}(\Lambda)}^2.
\end{equation}
\end{Theo}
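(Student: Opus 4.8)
The plan is to test \eqref{CDBCPG} with $v=u_N$, move the fractional derivative off $\dxc^{\alpha-1}u_N$ by an integration by parts, and observe that the penalty data \eqref{cond:coe:FNBC:C} are built precisely so that the penalty terms annihilate the boundary flux contributions created by that integration by parts; what is left is a single, manifestly positive fractional‑integral bilinear form, to which the coercivity estimate \eqref{eqn:intcoer} (already used in the proof of Lemma \ref{cont:coer:d}) applies. This mirrors the R--L Neumann case, so the argument runs parallel to it.

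The key preliminary identity is the following. Since $1<\alpha<2$, the defining formula \eqref{defn:Ca:short} with $k=1$ reads $\dxc^{\alpha-1}v = C_{\alpha,p}\bigl(p\,\cdlx^{\alpha-1}v-(1-p)\,\cdrx^{\alpha-1}v\bigr) = C_{\alpha,p}\bigl(p\,\ilx^{2-\alpha}v'+(1-p)\,\irx^{2-\alpha}v'\bigr)=\ix^{2-\alpha}v'$, the last step by \eqref{defn:RL:short} (the factor $(-1)^{k}=-1$ in $\cdrx^{\alpha-1}$ is absorbed). In particular $-D\,\dxc^{\alpha-1}u_N=-D\,\ix^{2-\alpha}u_N'$, and by Lemma \ref{lem5.1} the function $\ix^{2-\alpha}u_N'$ is continuous on $[-1,1]$ and absolutely continuous on $\Lambda$ (each Legendre mode contributes a term $(1\pm x)^{2-\alpha}$ times a polynomial, whose derivative has only the integrable endpoint singularity $(1\pm x)^{1-\alpha}$). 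Hence $\dxc^{\alpha-1}u_N(\pm1)=\ix^{2-\alpha}u_N'(\pm1)$ are finite and, since $\omega\equiv1$, the usual integration by parts gives
\[
-\bigl(D\,\ix^{2-\alpha}u_N',\,u_N\bigr) = -\dxc^{\alpha-1}u_N(1)\,u_N(1)+\dxc^{\alpha-1}u_N(-1)\,u_N(-1)+\bigl(\ix^{2-\alpha}u_N',\,u_N'\bigr).
\]

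Next I would evaluate the penalty contributions. Writing $u_N=\sum_{k=0}^N\tilde u_kL_k$, the Legendre orthogonality \eqref{PreJa2.9} (with $\mu=\nu=0$, so that $(L_k,u_N)_\omega=\gamma_k^{0,0}\tilde u_k$ since $\omega\equiv1$) together with \eqref{cond:coe:FNBC:C} yields $\rho_+(Q_N^+,u_N)_\omega=\sum_{k=0}^N\frac{1}{\gamma_k^{0,0}}L_k(1)\,(L_k,u_N)_\omega=\sum_{k=0}^N\tilde u_kL_k(1)=u_N(1)$ and, in the same way, $\rho_-(Q_N^-,u_N)_\omega=-u_N(-1)$. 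Inserting the integration by parts above into the bilinear form in \eqref{CDBCPG} with $v=u_N$, the boundary and penalty terms combine into $\dxc^{\alpha-1}u_N(-1)\bigl(u_N(-1)+\rho_-(Q_N^-,u_N)_\omega\bigr)+\dxc^{\alpha-1}u_N(1)\bigl(-u_N(1)+\rho_+(Q_N^+,u_N)_\omega\bigr)$, both of which vanish. Therefore
\[
\mathcal{A}_P^{C}(u_N,u_N,\rho_{\pm},Q_N^{\pm}) = \bigl(\ix^{2-\alpha}u_N',\,u_N'\bigr) + c\,(u_N,u_N).
\]

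To finish, the reaction term $c\,(u_N,u_N)$ is nonnegative (as $c\ge0$) and is dropped, and applying \eqref{eqn:intcoer} with $s=1-\alpha/2$ and $t=2-\alpha=2s$ to $w=u_N'$ bounds $\bigl(\ix^{2-\alpha}u_N',u_N'\bigr)$ below by $\|\ix^{1-\alpha/2}u_N'\|_{L^2(\Lambda)}^2=\|u_N'\|_{J^{\alpha/2-1}(\Lambda)}^2$, which is \eqref{CoerFNBC:C}. I expect the only genuinely nontrivial ingredient to be this last inequality — the coercivity of the \emph{two‑sided} fractional integral $\ix^{2-\alpha}$ for general $p\in(0,1)$, since the one‑sided semigroup/adjoint identities \eqref{eqn:semig}, \eqref{eqn:fintpart} do not transfer directly to the two‑sided operator (this is precisely why \eqref{eqn:intcoer} is established in the Appendix). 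The two points needing care are the legitimacy of the integration by parts in the presence of the $(1\pm x)^{1-\alpha}$ endpoint singularity of $D\,\ix^{2-\alpha}u_N'$, and the fact that $\omega\equiv1$ is used essentially — both in that step and in the reproducing‑kernel cancellation of the penalty terms, which would fail for a general weight.
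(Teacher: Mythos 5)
Your proposal follows essentially the same route as the paper's proof: test with $v=u_N$, integrate by parts using $\omega\equiv 1$ and $\dxc^{\alpha-1}u_N=\ix^{2-\alpha}u_N'$, cancel the boundary terms against the penalty terms via the Legendre reproducing-kernel identity $\rho_{\pm}(Q_N^{\pm},u_N)\pm u_N(\pm1)=0$, and conclude by coercivity of $(\ix^{2-\alpha}u_N',u_N')$. The only caveat is your last step: \eqref{eqn:intcoer} gives a lower bound in the $J_{p}^{-t}$ norm, and $\|\ix^{1-\alpha/2}u_N'\|_{L^2(\Lambda)}$ is the $J_{p}^{\alpha/2-1}$ norm, which is \emph{not} identically equal to the Fourier-based norm $\|u_N'\|_{J^{\alpha/2-1}(\Lambda)}$ appearing in \eqref{CoerFNBC:C}; the paper instead concludes directly from \eqref{eqn:fintpart} and \eqref{eqn:equv}, namely $(\ix^{2-\alpha}u_N',u_N')=C_{\alpha,p}\,(\ilx^{1-\alpha/2}u_N',\irx^{1-\alpha/2}u_N')=C_{\alpha,p}\cos\!\big(\pi(1-\alpha/2)\big)\,\|u_N'\|_{J^{\alpha/2-1}(\Lambda)}^2$, which is the one-line repair of your final identification (and makes explicit the constant that both arguments otherwise gloss over).
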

\begin{proof} Since $\omega(x) \equiv 1$, then using the integration by parts, we obtain
\begin{align*}
 \mathcal{A}_P^{C}(u_N,u_N,\rho_{\pm}, Q_N^{\pm})
 =&c(u_N,u_{N})+(\mathcal{I}_{p}^{2-\alpha}Du_N,Du_{N})\\
 &+\dxc^{\alpha-1}u_{N}(-1)(\rho_{-}(Q_{N}^{-},u_{N})+u_{N}(-1))\\
 &+\dxc^{\alpha-1}u_{N}(1)(\rho_{+}(Q_{N}^{+},u_{N})-u_{N}(1)).
\end{align*}
Using the condition \eqref{cond:coe:FNBC:C} gives
$\rho_{\pm}(Q_{N}^{\pm},u_{N})\pm u_{N}(\pm1)=0$.
We then obtain the estimate \eqref{CoerFNBC:C} from the estimate \eqref{eqn:equv} and \eqref{eqn:fintpart}.
\end{proof}
\begin{Rem}\label{rem5.1}
Again,
using the same argument, we can obtain
\begin{align*}
\rho_{-}Q_N^{+}=O(N^{2}), \quad \rho_{+}Q_N^{-}=O(N^{2}).
\end{align*}
Furthermore, we fix $Q_{N}^{\pm} = O(1)$  and let
    $\rho_{-}\ge O(N^{2}),\; \rho_{+}\ge O(N^{2})$
%
to obtain the coercivity (see the right plot Figure \ref{fig:CaFNBC:Smoothf:eig} in Section \ref{sec:num}).
In practice, we set $Q_{N}^{\pm}(x) = \pm\frac{1}{N^{2}}\sum_{k=0}^{N}\frac{1}{\gamma_{k}^{0,0}}L_{k}(x)L_{k}(\pm1)$  and tune the parameters $\rho_{\pm}$.
\end{Rem}

\begin{Rem}\label{rem4.4}
Although we are unable to give a rigorous analysis for the case of classical Dirichlet BCs, we can provide an intuitive approach on how to choose the penalty parameters and associated functions.  Specifically, let $\omega(x) \equiv 1$ and
\begin{equation}\label{eqn:caputo:QN:DBC}
    Q_N^{\pm}(x) = \frac{1}{N^{2}}\sum_{k=0}^{N}\frac{1}{\gamma_{k}^{0,0}}L_{k}(x)L_{k}(\pm1),
\end{equation}
we have  $Q_N^{\pm}(x) = O(1)$ and
  $(Q_{N}^{\pm},u_{N})=u_{N}(\pm 1)/N^2$.
Thus,
\begin{align*}
 &\mathcal{A}_P^{C}(u_N,u_N,\rho_{\pm}, Q_N^{\pm})\\
 =&c(u_N,u_{N})-(D\mathcal{I}_{p}^{2-\alpha}Du_N,u_{N})+u_{N}(-1)\rho_{-}(Q_{N}^{-},u_{N})+u_{N}(1)\rho_{+}(Q_{N}^{+},u_{N})\\
 =&c(u_N,u_{N})-(D\mathcal{I}_{p}^{2-\alpha}Du_N,u_{N})+\rho_{-}u^{2}_{N}(-1)/N^2+\rho_{+}u^{2}_{N}(1)/N^2.
\end{align*}
To ensure  coercivity, we can provide sufficient large values of $\rho_{\pm}$ such that
$$\mathcal{A}_P^{C}(u_N,u_N,\rho_{\pm}, Q_N^{\pm}) \ge C>0.$$
The numerical results show that
\begin{align*}
\rho_{+} = \rho_{-}=O(N^{3})
\end{align*}
is a good choice, see the results for Example \ref{ex:CaDBC}.
%
%
%
\end{Rem}

\section{Numerical examples}\label{sec:num}
We now show several numerical examples to illustrate the accuracy and coercivity conditions of the proposed SPM, and we will compare the results against results obtained from PGS-$\tau$.

\subsection{Numerical tests for the conservative R-L FDEs}
\begin{exam}\label{ex:RLFDBC}
We begin by considering the conservative R-L FDEs with FDBCs, i.e., \eqref{Intro1.1}-\eqref{IntroFDBC1.2}. In particular, we consider the following two cases:
\begin{itemize}
  \item Case I: Smooth solution $u(x) = (1-x^2)^2$;
  \item Case II: Smooth right hand function (RHF) $f(x) = 1+\cos(\pi x)$.
\end{itemize}
For Case I, the boundary conditions can be computed directly by the exact solution while for Case II, the boundary conditions are $\ix^{2-\a}u(-1)=2,\;  \ix^{2-\a}u(1)=1$.  
\end{exam}

We first test the accuracy to illustrate the effectiveness of SPM \eqref{RLPG}. Set $p =0.8$ and $\tilde{\a}=\frac{\a}{2},\; \tilde{\b}=\frac{\a}{2}$.
According to Remark \ref{rem4.1}, we
take $\rho_{-}=N^{2+q-\tilde{\beta}},\; \rho_{+}=N^{2+q-\tilde{\a}}$ with $q=max(\tilde{\a}, \tilde{\b}) +2$, and  $Q_{N}^{-}(x) = D^2 P_N^-(x) /N^{2+q-\tilde{\beta}},\; Q_{N}^{+} = D^2 P_N^+(x) /N^{2+q-\tilde{\alpha}}$, where $P_N^{\pm}(x)$ are given in \eqref{eqn:PNpm}.
We compute the $L^{\infty}$ error using 1000 uniformly distributed points. For comparison, we also compute the  $L^{\infty}$ error using PGS-$\tau$ presented in Appendix \ref{sec:app:tau}.
The convergence results of the $L^{\infty}$ error with different values of fractional order $\alpha = 1.2,1.8$ and $c = 0,1$ for Case I and Case II are shown in Figure \ref{fig:FDBC:Smoothu:errcomp} and \ref{fig:FDBC:Smoothf:errcomp}, respectively. Here, all the parameters for the PGS-$\tau$  are the same as the ones for SPM except the penalty parameters. For Case II, i.e., for the case of smooth RHF, since we do not have the analytic solution, we obtain the numerical solution using SPM with $N = 512$ as the reference solution; the same approach is also used for all the tests below, which require a reference solution but do not have an explicit one.
Observe from both Figures that the accuracy with SPM is \emph{higher} than that with PGS-$\tau$ for \emph{all} cases.
For Case I,  algebraic convergence is obtained; see Figure \ref{fig:FDBC:Smoothu:errcomp} (left) for $c = 0$.
For case II, Figure \ref{fig:FDBC:Smoothf:errcomp} shows that spectral convergence is obtained for $c = 0$ while algebraic convergence is obtained for $c = 1$. This means that for $c = 0$, the convergence of SPM depends only on the regularity of the RHF.
We also show how the penalty parameters $\rho_{\pm}$ affect the accuracy of SPM. The right plot of Figure \ref{fig:FDBC:Smoothu:errcomp} shows the $L^{\infty}$-error against the penalty parameters $\rho_{+}$ with different values of fractional order $\alpha = 1.2,\, 1.8$ for $c = 0$; similar results are obtained for $c = 1$ not shown here. We observe that we obtain the \emph{best} accuracy  when the penalty parameters are chosen to  satisfy the condition \eqref{cond:coe:FDBC}, even for the subcase of $c = 1$ (not shown here) that is not covered by our theory.

In order to verify the sufficient condition \eqref{cond:coe:FDBC} for the coercivity, we calculate the minimum value of the real part of all eigenvalues, denoted by $Re(eig)_{min}$, for different values of fractional order with $N = 100$ and $p = 0.8,\,0.5$, which are shown in Figure \ref{fig:minieig:allalpha}. The left plot shows the results for different values of fractional order while the right plot shows the results against the penalty parameter $\rho_+$, both for $c = 0$.
Observe that the values of $Re(eig)_{min}$ for $\alpha\in(1,2)$ are positive, which means that SPM \eqref{RLPG} with FDBCs is coercive provided that the condition \eqref{cond:coe:FDBC} holds; this  agrees with our analysis. For $c = 1$, similar results are obtained  (not shown here) although this case is not covered by our theory.
Overall, we observe positivity
of values of $Re(eig)_{min}$, for $c = 0,1$ provided that
\begin{equation*}
    \rho_{-}\ge N^{2+q-\tilde{\beta}},\; \rho_{+}\ge N^{2+q-\tilde{\alpha}}
\end{equation*}
as discussed in Remark \ref{rem4.1}.

\begin{figure}[!t]
\begin{minipage}{0.49\linewidth}
\begin{center}
\includegraphics[scale=0.4,angle=0]{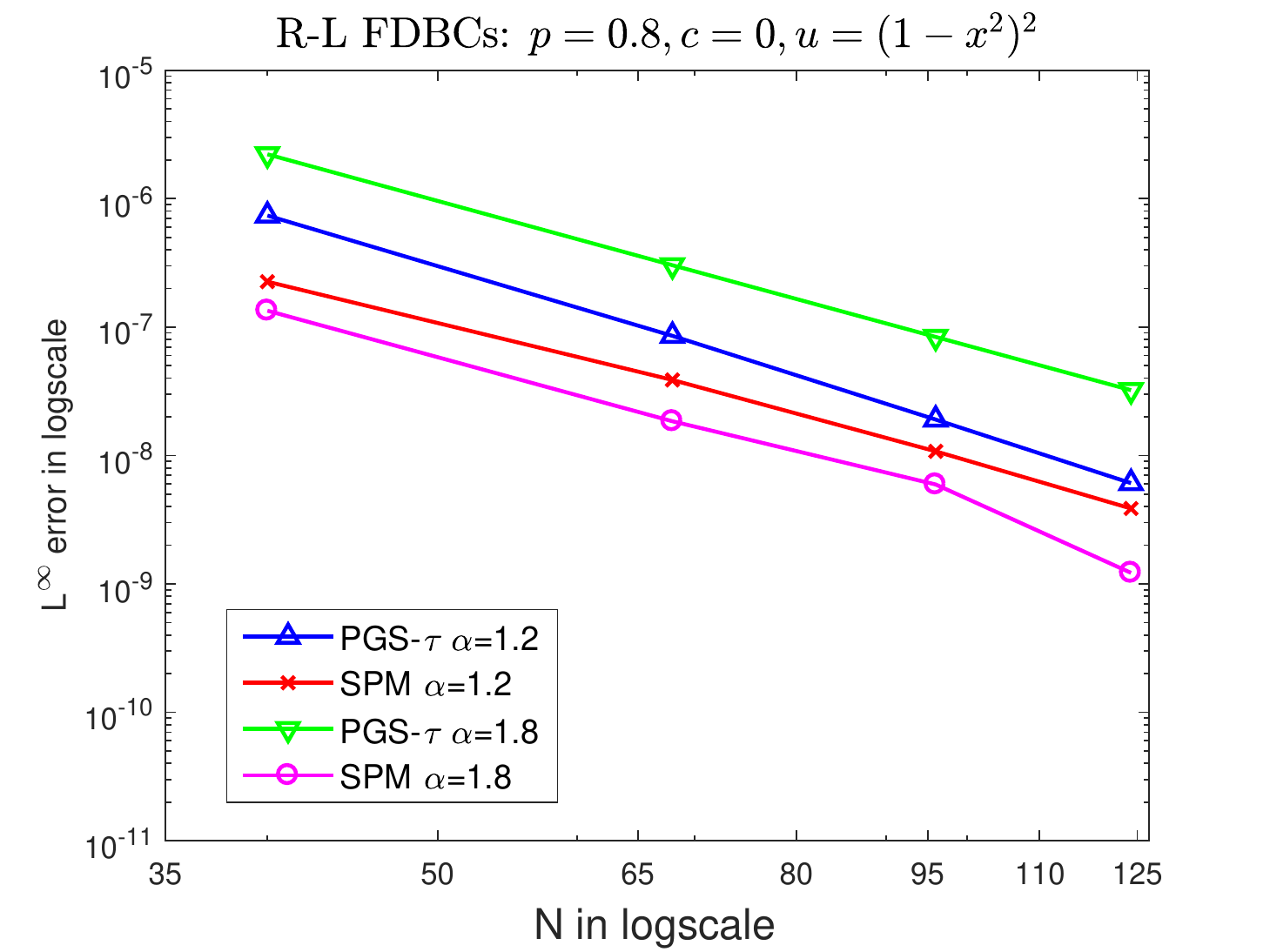}\end{center}
\end{minipage}
\begin{minipage}{0.49\linewidth}
\begin{center}
\includegraphics[scale=0.4,angle=0]{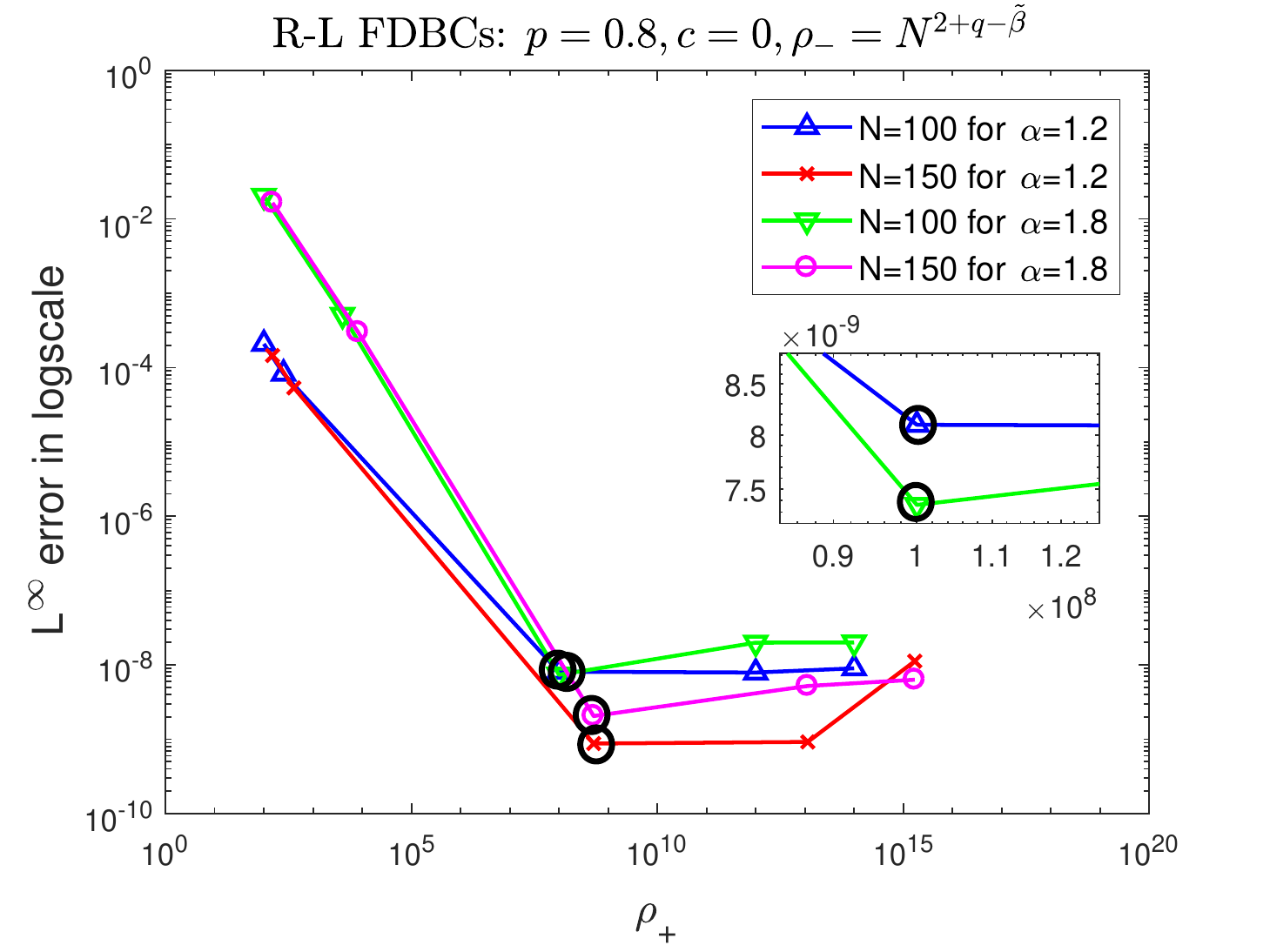}\end{center}
\end{minipage}
\caption{\scriptsize
Case I of Example \ref{ex:RLFDBC}: Convergence of $L^{\infty}$-error for SPM and PGS-$\tau$ versus polynomial order $N$ (left) and versus penalty parameter $\rho_+$ (right) for different values of fractional order $\alpha=1.2,\; 1.8$. The black circles (right) correspond to the penalty parameters satisfying the coercivity sufficient condition \eqref{cond:coe:FDBC}.
}\label{fig:FDBC:Smoothu:errcomp}
\end{figure}


\begin{figure}[!t]
\begin{minipage}{0.49\linewidth}
\begin{center}
\includegraphics[scale=0.4,angle=0]{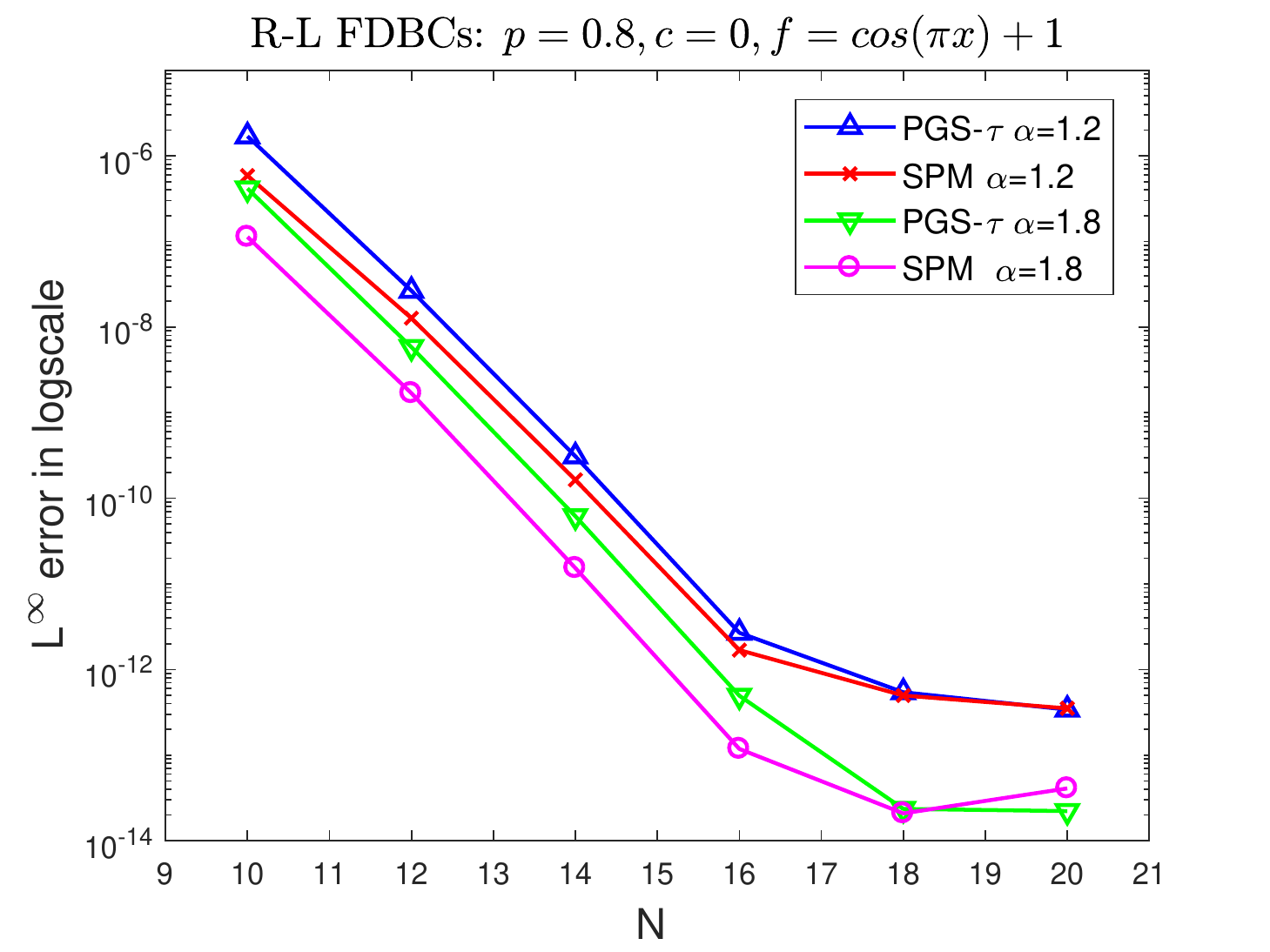}\end{center}
\end{minipage}
\begin{minipage}{0.49\linewidth}
\begin{center}
\includegraphics[scale=0.4,angle=0]{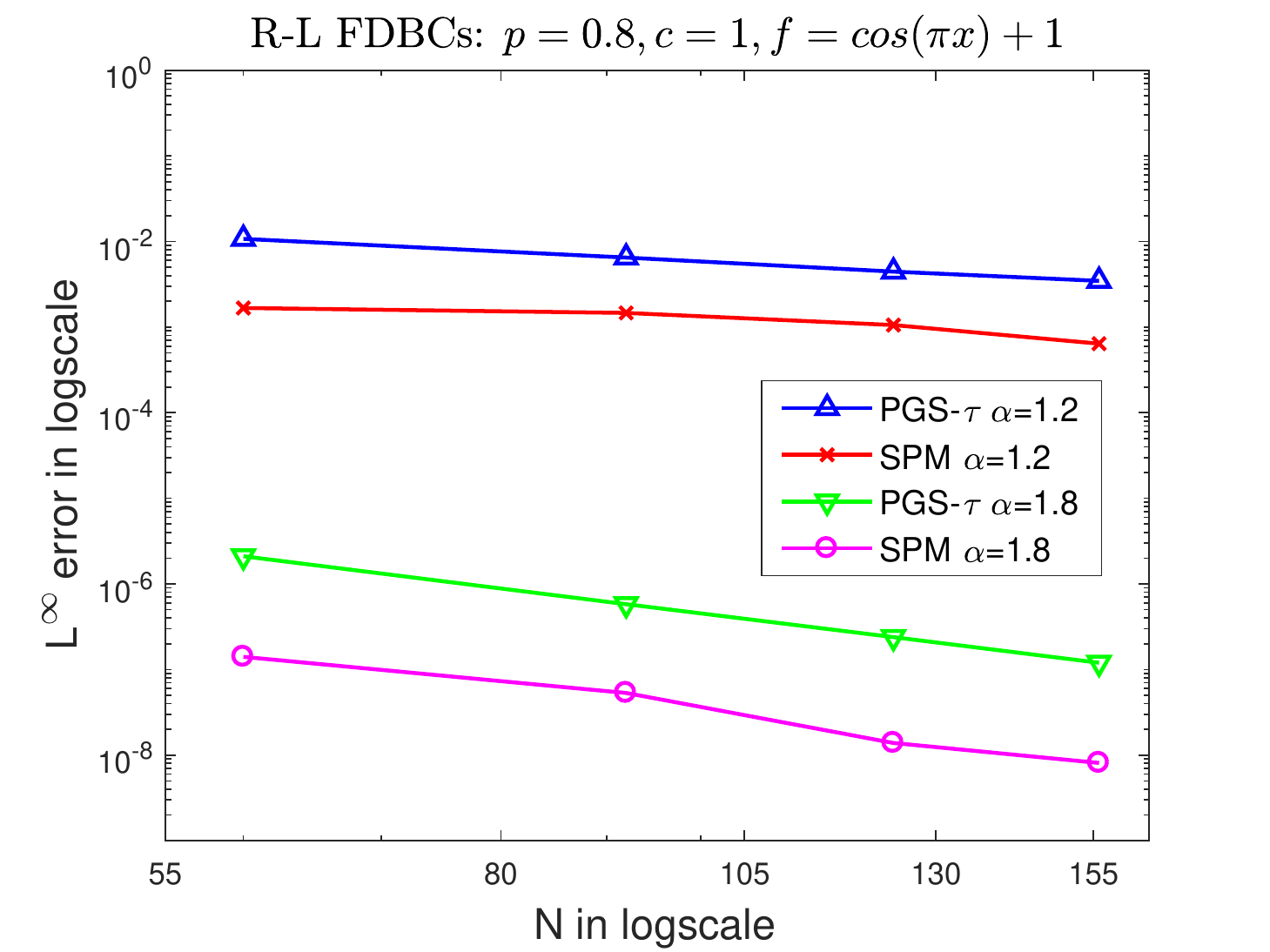}\end{center}
\end{minipage}
\caption{\scriptsize
Case II of Example \ref{ex:RLFDBC}: Convergence of $L^{\infty}$-error for SPM and PGS-$\tau$ with different values of fractional order $\alpha=1.2,\; 1.8$ and $\rho_{-}=N^{2+q-\tilde{\b}},\,\rho_{+}=N^{2+q-\tilde{\a}}$. Left: $c=0$, right: $c=1$.}\label{fig:FDBC:Smoothf:errcomp}
\end{figure}


\begin{figure}[!t]
\begin{minipage}{0.49\linewidth}
\begin{center}
\includegraphics[scale=0.4,angle=0]{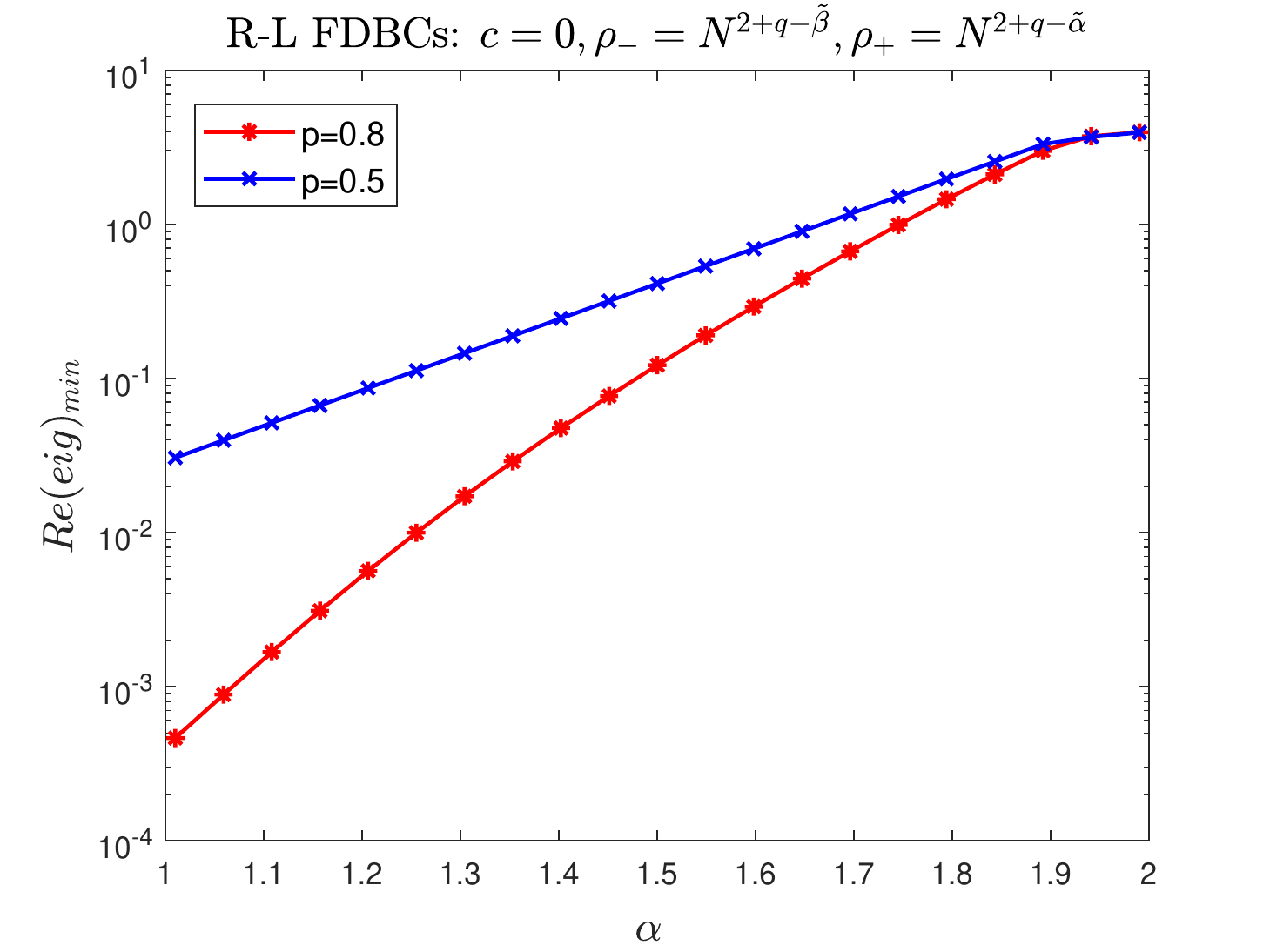}\end{center}
\end{minipage}
\begin{minipage}{0.49\linewidth}
\begin{center}
\includegraphics[scale=0.4,angle=0]{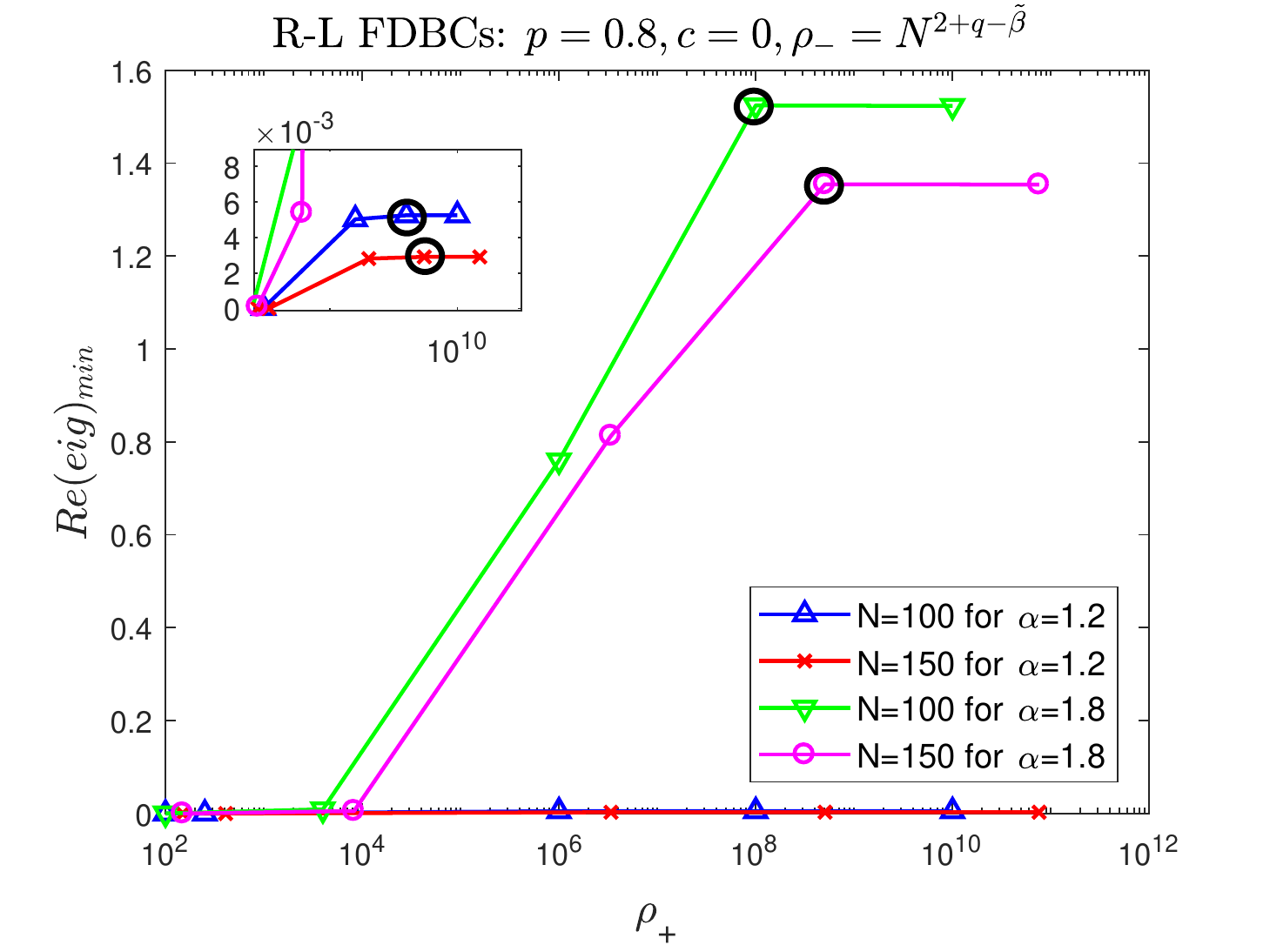}\end{center}
\end{minipage}
\caption{\scriptsize
Example \ref{ex:RLFDBC}: The minimum values of the real part  of eigenvalues versus fractional order (left) and penalty parameter $\rho_+$ (right). The black circles (right) correspond to the penalty parameters satisfying the coercivity sufficient condition \eqref{cond:coe:FDBC}.}\label{fig:minieig:allalpha}
\end{figure}

\begin{exam}\label{ex:RLFNBC}
We now consider the conservative R-L FDEs with FNBCs, i.e., \eqref{Intro1.1}-\eqref{IntroFNBC1.3}. We consider the following two cases as in the previous Example:
\begin{itemize}
  \item Case I: Smooth solution $u(x) = (1-x^2)^2$;
  \item Case II: Smooth RHF $f(x) = 1+\cos(\pi x)$.
\end{itemize}
For Case I, the boundary conditions can be computed directly by the exact solution while for Case II, the boundary conditions are $\dx^{\a-1}u(-1)=2,\;  \dx^{\a-1}u(1)=1$.  
\end{exam}

In this example, we fix $c = 1$. For $c = 0$, we require an additional condition of mass conservation, but we will not discuss this here.
By  Remark \ref{rem4.2}, we set  $Q_{N}^{-}(x)  = \frac{-1}{N^{2\mu+2}}\sum_{k=0}^{N}\frac{1}{\gamma_{k}^{\nu,\mu}}J_{k}^{-\nu,-\mu}(x)P_{k}^{\nu,\mu}(-1)$, $Q_{N}^{+} = \frac{1}{N^{2\nu+2}} \sum_{k=0}^{N}\frac{1}{\gamma_{k}^{\nu,\mu}}J_{k}^{-\nu,-\mu}(x)P_{k}^{\nu,\mu}(1)$
with $\mu,\nu$ satisfying  condition \eqref{PreJa2.13}.
The convergence results of the $L^{\infty}$-error for SPM and PGS-$\tau$ with different values of fractional order $\alpha=1.2,\, 1.8$ with $p = 0.8,\, \rho_{-} = N^{2\mu+2},\, \rho_{+}= N^{2\nu+2}$ are shown for Case I and II in the upper left and right plots of Figure \ref{fig:FNBC:Smoothf:errcomp}, respectively. Again, in both cases, we obtain \emph{higher} accuracy with SPM than with  PGS-$\tau$.
We also show $L^{\infty}$-error for different fractional orders $\alpha=1.2,\; 1.8$ by tuning the penalty parameters $\rho_{+}$ in the lower plot of Figure \ref{fig:FNBC:Smoothf:errcomp}.  We observe again that the \emph{best} accuracy is obtained when we choose $\rho_{-} = N^{2\mu+2},\, \rho_{+}= N^{2\nu+2}$ for which the sufficient condition \eqref{cond:coe:FNBC} of coercivity is satisfied.
To verify the coercivity condition \eqref{cond:coe:FNBC} of SPM \eqref{RLPG} with R-L FNBCs, we show the values of $Re(eig)_{min}$ for $\alpha\in (1,2)$ with $p = 0.8,\,0.5$ in the left plot of Figure \ref{fig:FNBC:Smoothf:eig}. We can see that all  values of $Re(eig)_{min}$ are positive. This verifies the coercivity condition \eqref{cond:coe:FNBC}. Moreover, we can see from the right plot of Figure \ref{fig:FNBC:Smoothf:eig} that coercivity can be maintained if $\rho_{-} \ge N^{2\mu+2},\, \rho_{+} \ge N^{2\nu+2}$.

\begin{figure}[!t]
\begin{minipage}{0.49\linewidth}
\begin{center}
\includegraphics[scale=0.4,angle=0]{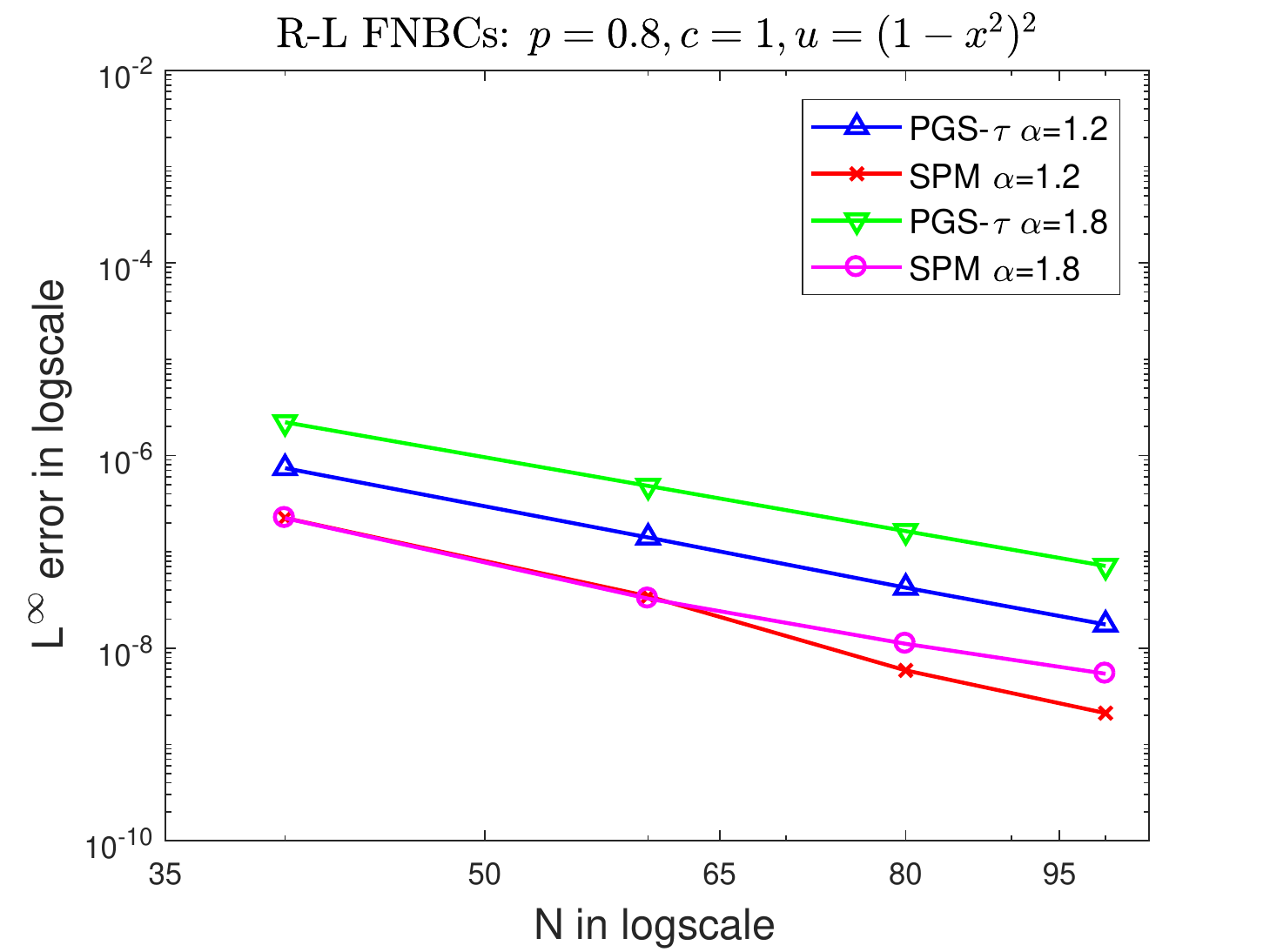}\end{center}
\end{minipage}
\begin{minipage}{0.49\linewidth}
\begin{center}
\includegraphics[scale=0.4,angle=0]{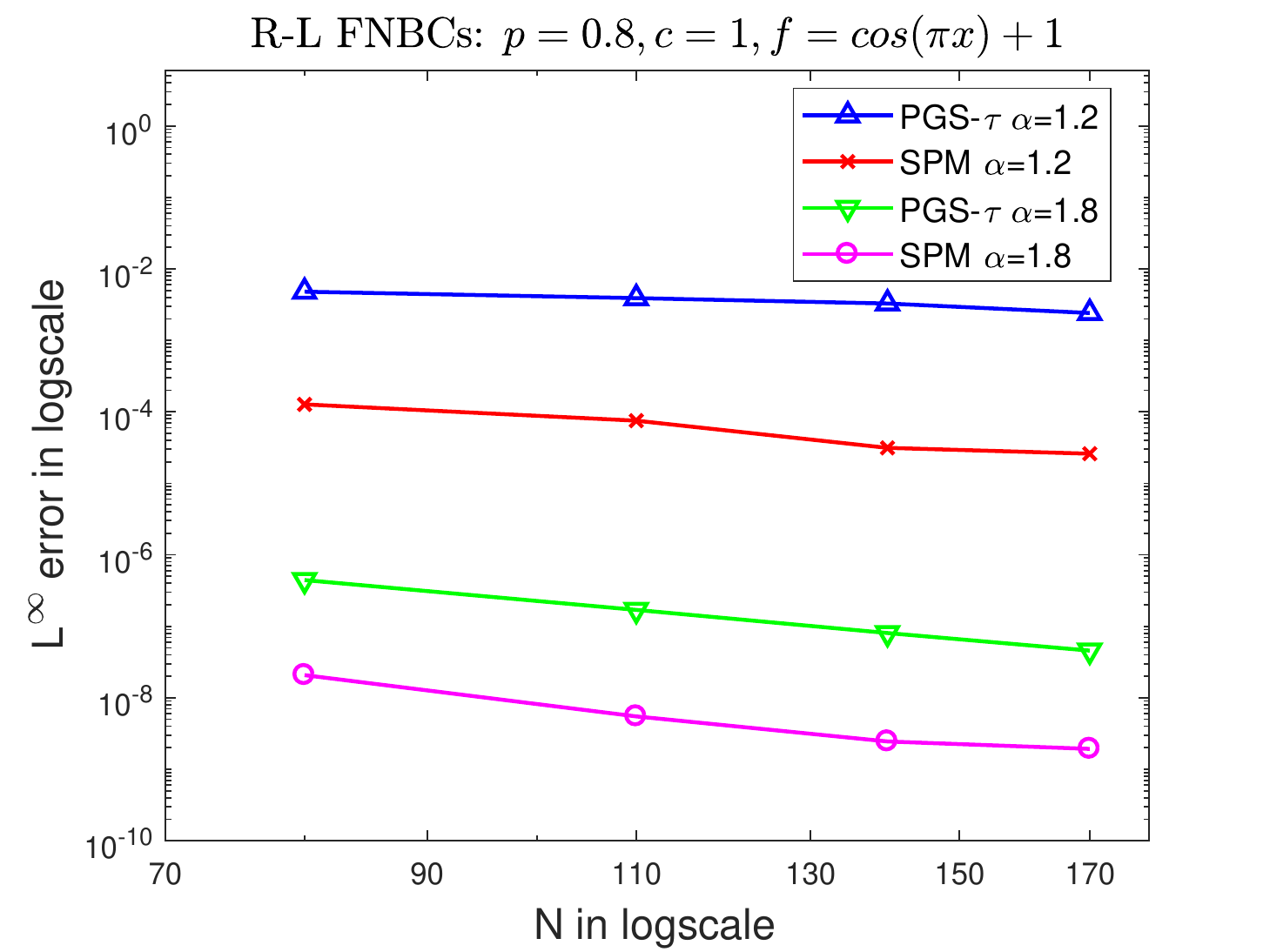}\end{center}
\end{minipage}
\begin{minipage}{0.95\linewidth}
\begin{center}
\includegraphics[scale=0.4,angle=0]{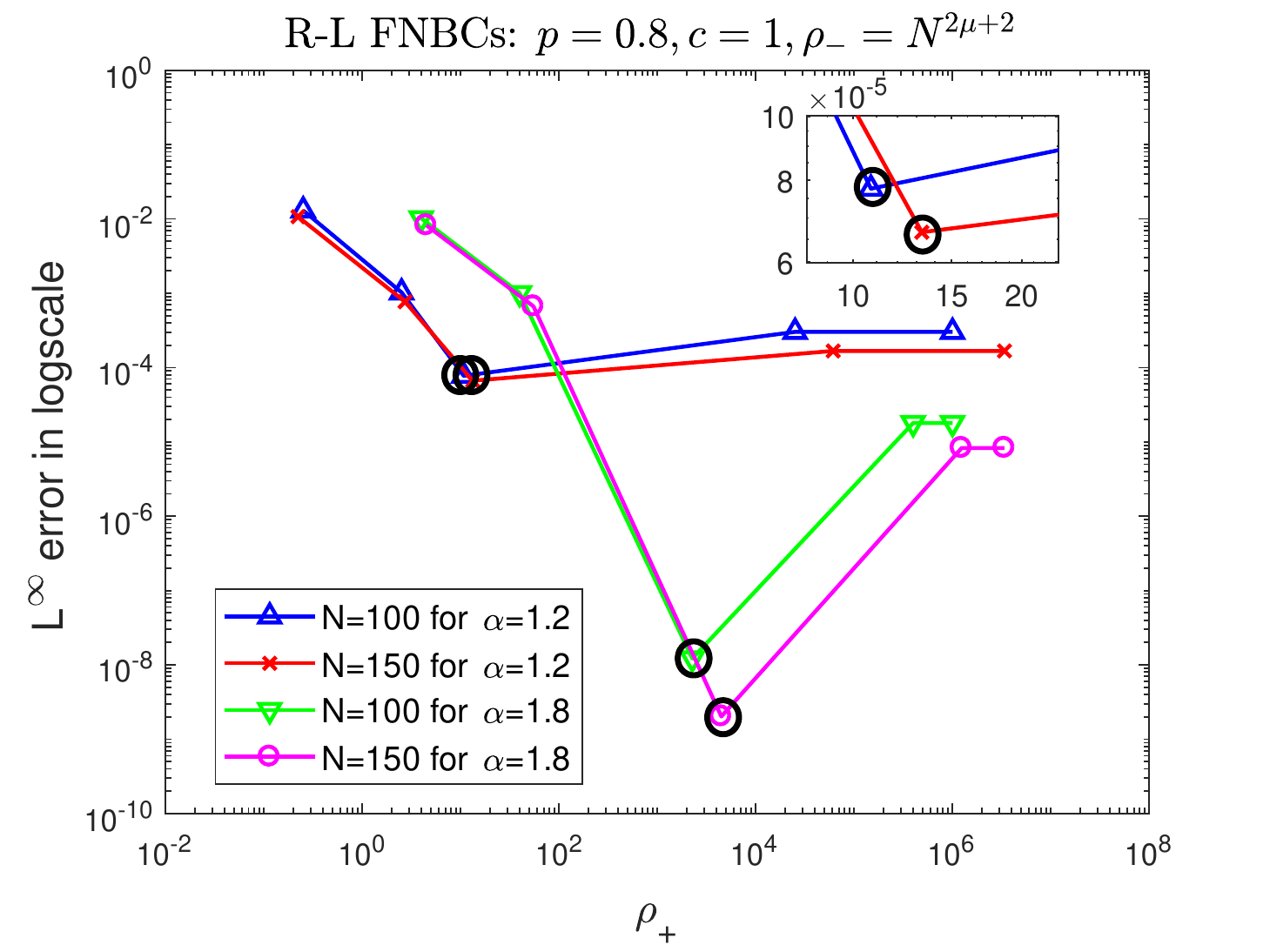}\end{center}
\end{minipage}
\caption{\scriptsize
Example \ref{ex:RLFNBC}:  Convergence of $L^{\infty}$-error for SPM and PGS-$\tau$ with different values of fractional order $\alpha = 1.2,1.8$. Upper left: versus $N$ for smooth solution $u(x) = (1-x^2)^2$, upper right: versus $N$ for smooth RHF $f(x) = 1+\cos(\pi x)$, lower: versus the penalty parameter $\rho_+$. The black circles (down) correspond to the penalty parameters satisfying the coercivity sufficient condition \eqref{cond:coe:FNBC}.}\label{fig:FNBC:Smoothf:errcomp}
\end{figure}



\begin{figure}[!t]
\begin{minipage}{0.49\linewidth}
\begin{center}
\includegraphics[scale=0.4,angle=0]{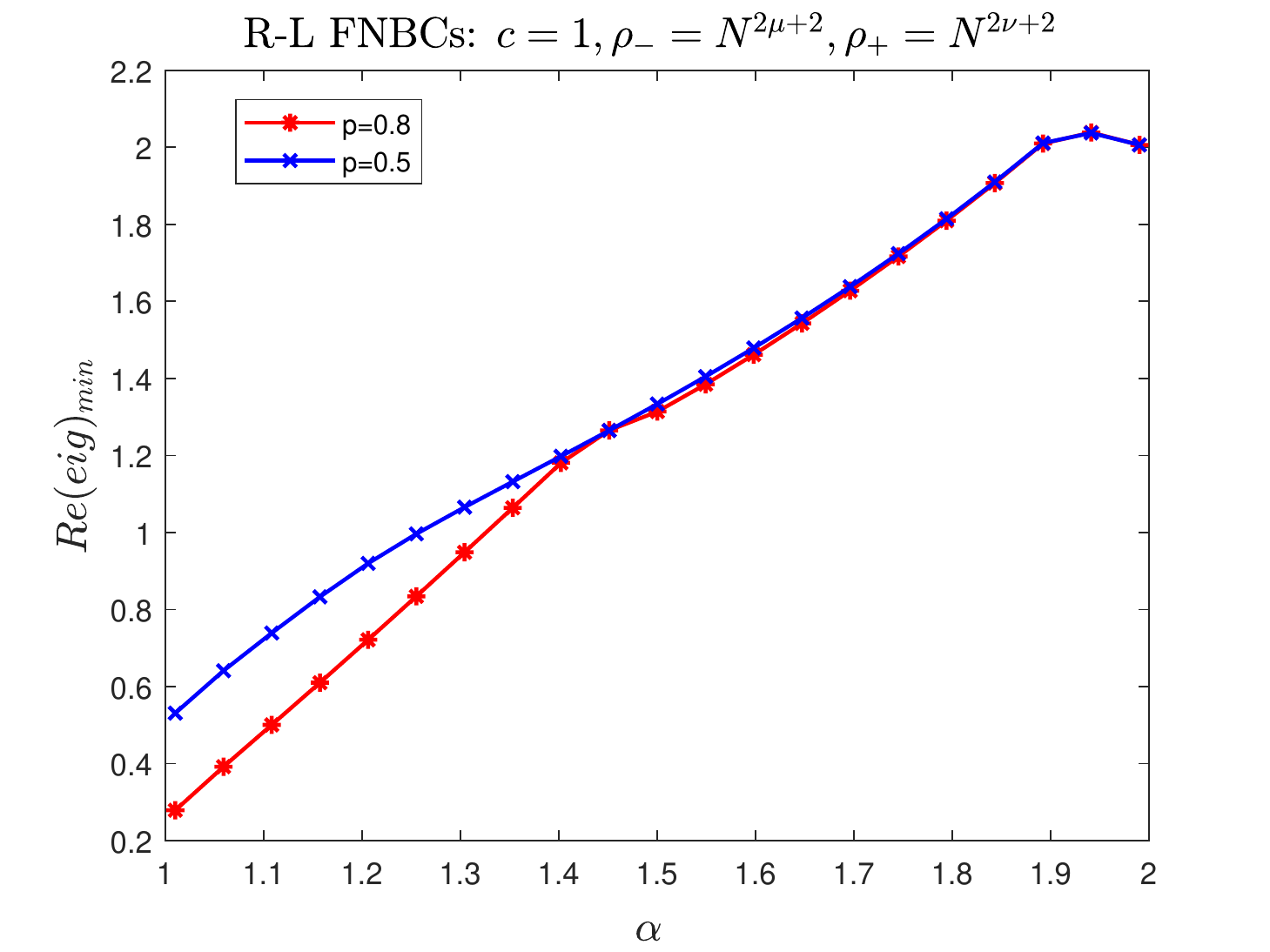}\end{center}
\end{minipage}
\begin{minipage}{0.49\linewidth}
\begin{center}
\includegraphics[scale=0.4,angle=0]{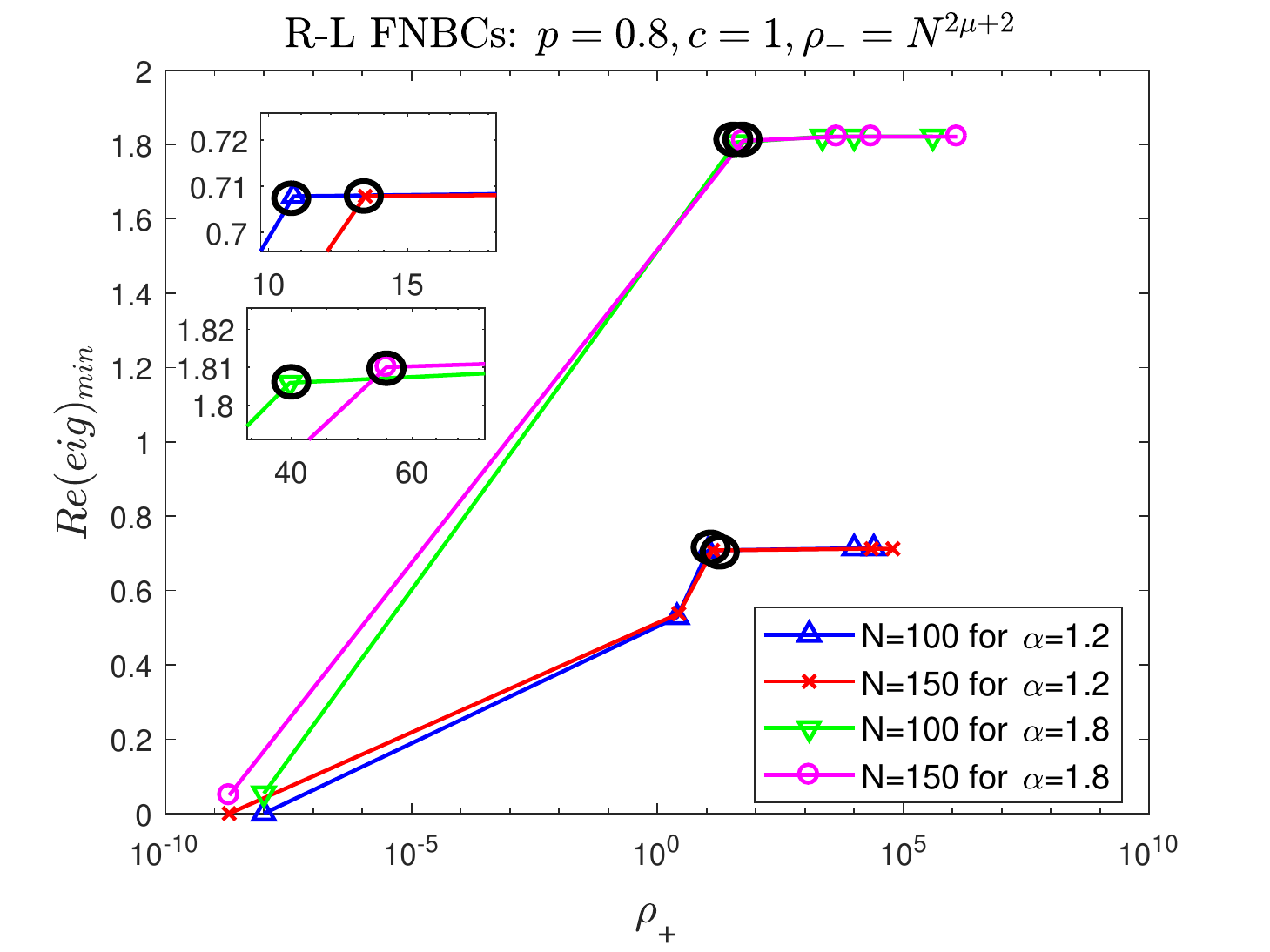}\end{center}
\end{minipage}
\caption{\scriptsize
Example \ref{ex:RLFNBC}: The minimum values of the real part  of eigenvalues versus fractional order (left) and penalty parameter $\rho_+$ (right). The black circles (right) correspond to the penalty parameters satisfying the coercivity sufficient condition \eqref{cond:coe:FNBC}.}\label{fig:FNBC:Smoothf:eig}
\end{figure}


\subsection{Numerical tests for the conservative Caputo FDEs}\label{sec:num:Caputo:DBC}
\begin{exam}\label{ex:CaDBC}
We now turn to the Caputo fractional FDEs. Consider the conservative Caputo FDEs with classical Dirichlet BCs, i.e., \eqref{Intro1.1}-\eqref{IntroDBC1.4}, with the following two cases:
\begin{itemize}
  \item Case I: Smooth solution $u(x) = \cos(\pi x)$;
  \item Case II: Smooth RHF $f(x) = 1+\cos(\pi x)$.
\end{itemize}
For Case I, the boundary conditions can be computed directly by the exact solution while for Case II, the boundary conditions are $u(-1)=1,\;  u(1)=2$.  
\end{exam}

By the virtue of the discussion of Remark \ref{rem4.4},
we take the penalty functions  given by \eqref{eqn:caputo:QN:DBC}, i.e.,
    $Q_N^{\pm}(x) = \frac{1}{N^2}\sum_{k=0}^{N}\frac{1}{\gamma_{k}^{0,0}}L_{k}(x)L_{k}(\pm1)$.
We now test the accuracy by choosing the parameters to be $\rho_+ = \rho_- = N^3$.
Figure \ref{fig:CaDBC:Smoothuf:errcomp:c0} shows the convergence results  for the Case I (upper left) and Case II (upper right) with $p = 0.8, c = 0$ and different values of fractional order $\alpha = 1.2,\, 1.8$.
Observe that we can obtain \emph{spectral accuracy} for the smooth solution, which is expected since we use the polynomial approximation, while algebraic convergence is obtained for the case of smooth RHF. However, for the case of smooth RHF, we again observe that the accuracy with SPM is much \emph{higher} than that with PGS-$\tau$.
Next, we present the $L^{\infty}$-error with respect to the values of the penalty parameters $\rho_{\pm}$.
For the sake of simplicity, we let $\rho_{-} = \rho_{+} = \rho$.
By tuning the parameters $\rho_{\pm}$,
we plot the  $L^{\infty}$-error against values of the penalty parameter $\rho = \rho_{\pm}$ for different fractional orders (the lower plot of Figure \ref{fig:CaDBC:Smoothuf:errcomp:c0}). We observe that using the estimate $\rho_{\pm} = N^{3}$ is enough to obtain high accuracy. Furthermore, from the left plot of Figure \ref{fig:CaFDBC:eig}, which shows the value of $Re(eig)_{min}$ with $p = 0.8, \,0.5$ for $\alpha\in (1,2)$, and the right plot of Figure \ref{fig:CaFDBC:eig}, which shows the values of $Re(eig)_{min}$ against $\rho = \rho_{\pm} $with $p = 0.8$, we can see that coercivity is satisfied by choosing $\rho_{\pm} \ge N^{3}$.
Similar observations can be obtained for $c = 1$, which is not shown here.


\begin{figure}[!t]
\begin{minipage}{0.49\linewidth}
\begin{center}
\includegraphics[scale=0.4,angle=0]{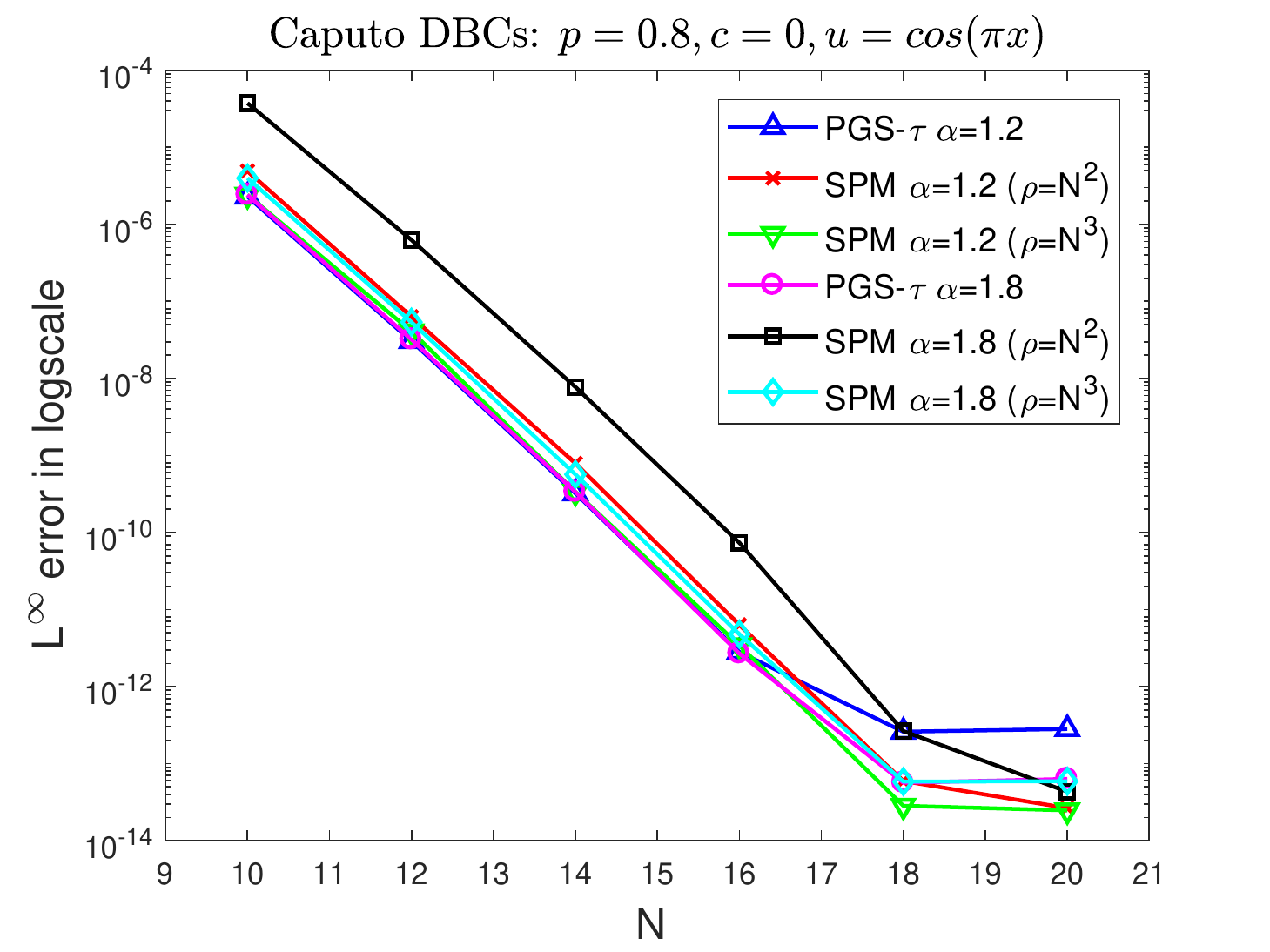}\end{center}
\end{minipage}
\begin{minipage}{0.49\linewidth}
\begin{center}
\includegraphics[scale=0.4,angle=0]{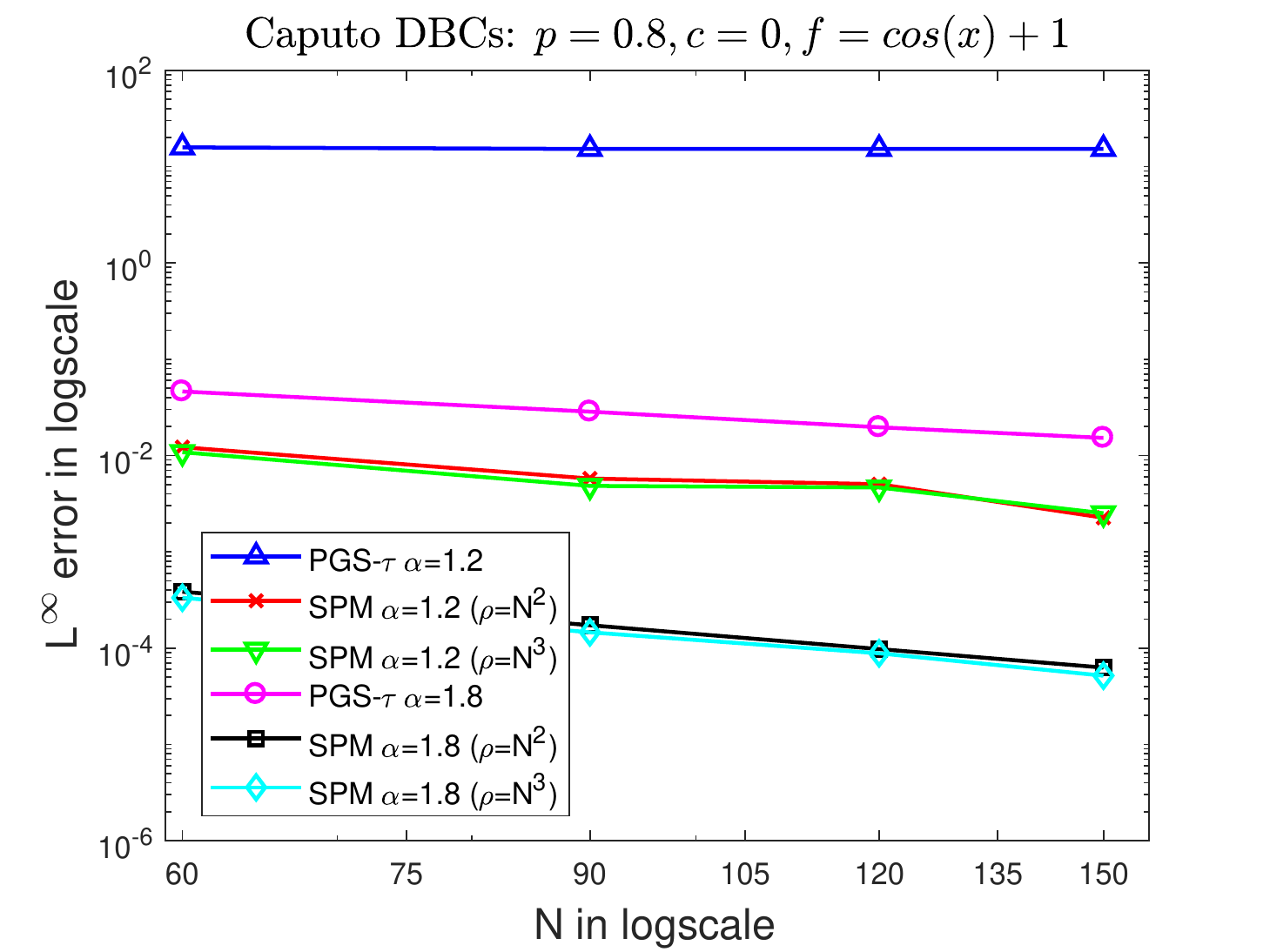}\end{center}
\end{minipage}
\begin{minipage}{0.95\linewidth}
\begin{center}
\includegraphics[scale=0.4,angle=0]{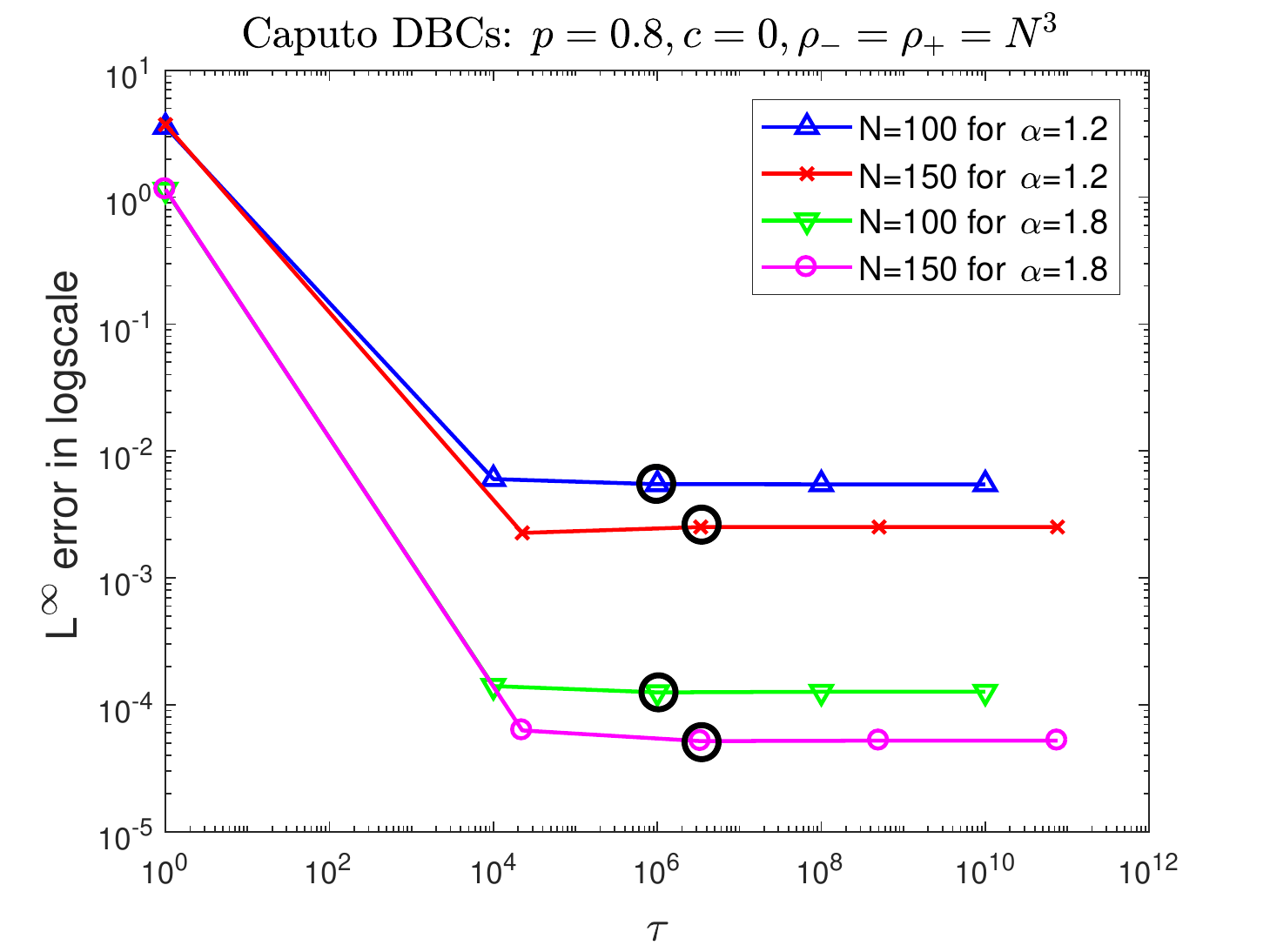}\end{center}
\end{minipage}
\caption{\scriptsize
Example \ref{ex:CaDBC}: Convergence of $L^{\infty}$-error for SPM and PGS-$\tau$ with different values of fractional order $\alpha = 1.2,1.8$. Upper left: versus $N$ for smooth solution $u(x) = \cos(\pi x)$, upper right: versus $N$ for smooth RHF $f(x) = 1+\cos(\pi x)$, lower: versus the penalty parameter $\rho = \rho_{\pm}$. The black circles (down) correspond to $\rho = \rho_{\pm}=N^{3}$.
 }\label{fig:CaDBC:Smoothuf:errcomp:c0}
\end{figure}


\begin{figure}[!t]
\begin{minipage}{0.49\linewidth}
\begin{center}
\includegraphics[scale=0.4,angle=0]{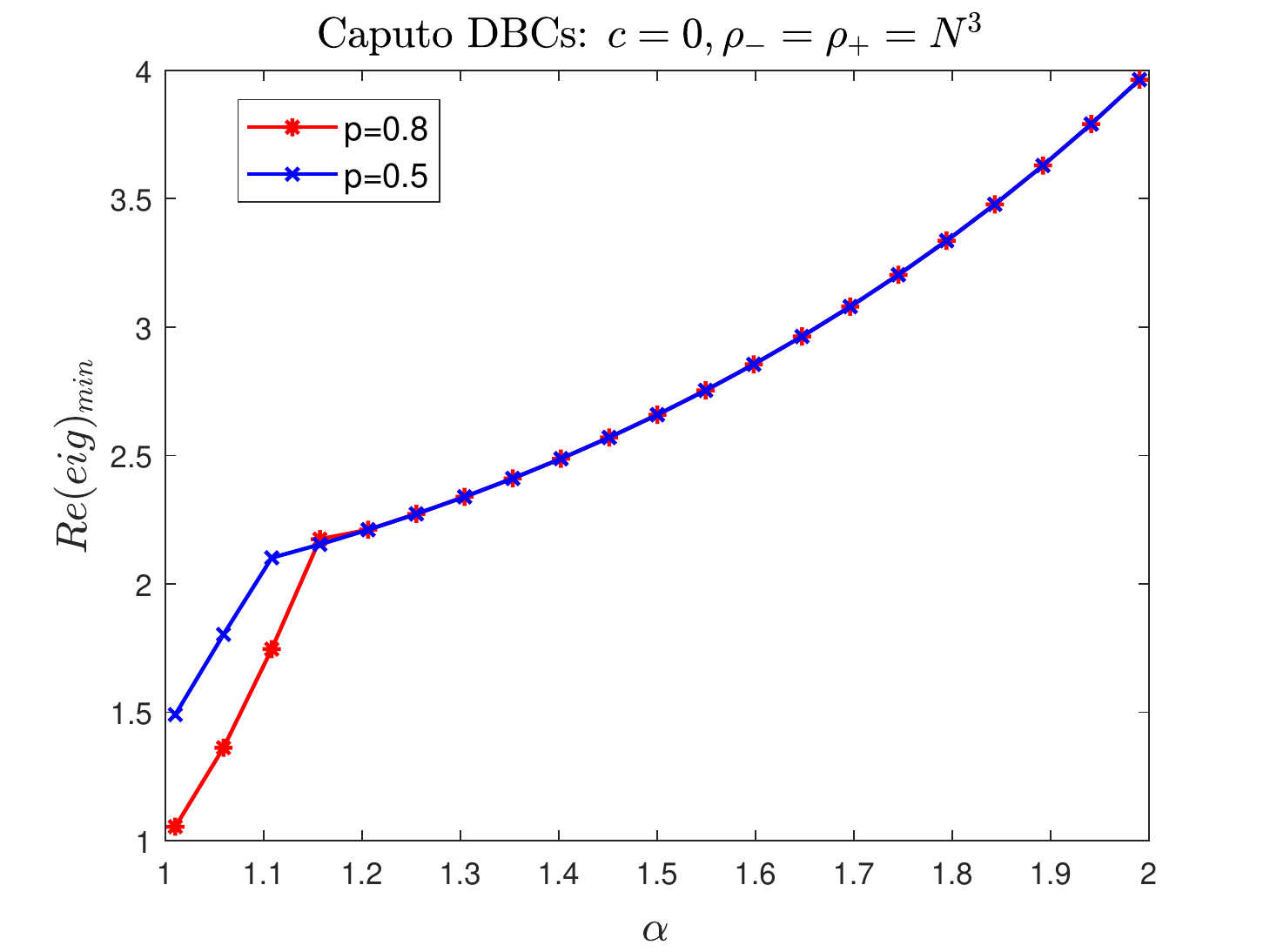}\end{center}
\end{minipage}
\begin{minipage}{0.49\linewidth}
\begin{center}
\includegraphics[scale=0.4,angle=0]{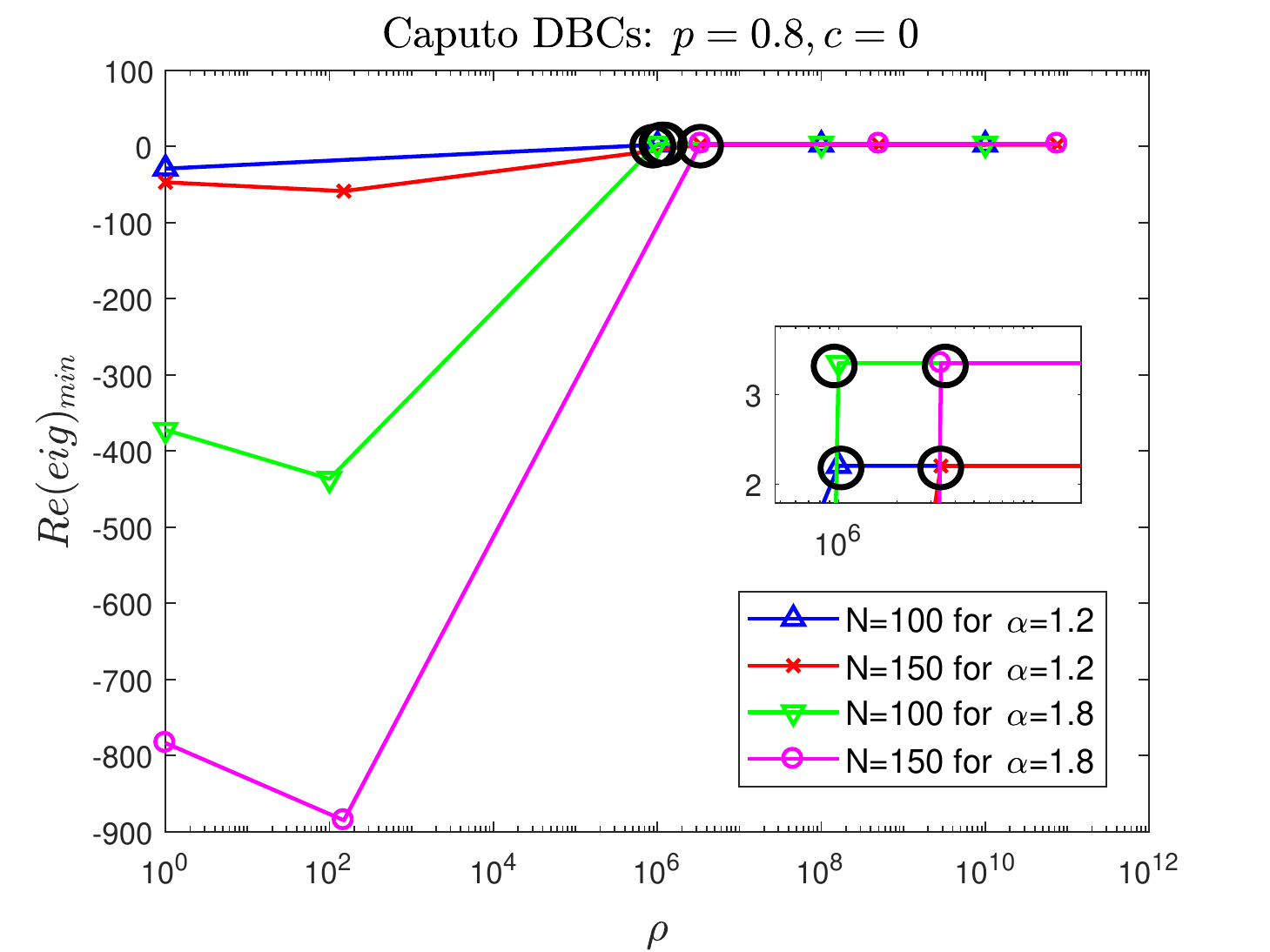}\end{center}
\end{minipage}
\caption{\scriptsize
Example \ref{ex:CaDBC}: The minimum values of the real part  of eigenvalues versus fractional order (left) and penalty parameter $\rho = \rho_{\pm}$ (right). The black circles (right) correspond to $\rho = \rho_{\pm}=N^{3}$.}\label{fig:CaFDBC:eig}
\end{figure}

%
%
%

\begin{exam}\label{ex:CaFNBC}
We now consider the Caputo conservative FDEs with Caputo FNBCs, i.e., \eqref{Intro1.1}-\eqref{IntroFNBC1.5} with the following two cases:
\begin{itemize}
  \item Case I: Smooth solution $u(x) = x^3+1$;
  \item Case II: Smooth RHF $f(x) = 1+\cos(\pi x)$.
\end{itemize}
For Case I, the boundary conditions can be computed directly by the exact solution while for Case II, the boundary conditions are $\dxc^{\a-1}u(-1)=1,\;  \dxc^{\a-1}u(1)=2$.  
\end{exam}

Let $c=1$, $p=0.8$, in view of Theorem \ref{thm:Caputo:FNBC} and Remark \ref{rem5.1}, we  set $\rho_{\pm} = N^{2}$, $Q_N^{\pm}= \pm \frac{1}{N^{2}}\sum_{k=0}^{N} \frac{1}{\gamma_{k}^{0,0}}L_{k}(x)L_{k}(\pm1)$ in this example.
The convergence results of the $L^{\infty}$-error for SPM and PGS-$\tau$ with different values of fractional order $\alpha=1.2,\, 1.8$ for Case I (upper left) and Case II (upper right) are shown in  Figure \ref{fig:CaFNBC:errcomp}. We observe that, same as in the previous example, we obtain spectral accuracy for the Case I and algebraic convergence for the Case II. Also, we obtain \emph{higher} accuracy with SPM than with PGS-$\tau$ in both cases.
We also present the $L^{\infty}$-error for different fractional orders by tuning the penalty parameters $\rho = \rho_{\pm}$ in the lower plot of  Figure \ref{fig:CaFNBC:errcomp}.  From which we observe that the best accuracy is obtained when we choose $\rho_{\pm} = \rho = N^{2}$  satisfying the sufficient condition \eqref{cond:coe:FNBC:C} for the coercivity.
Figure \ref{fig:CaFNBC:Smoothf:eig} shows the value of $Re(eig)_{min}$ against $\alpha$ with $p = 0.8,0.5$ (left) and against $\rho = \rho_{\pm}$ with $p = 0.8$ (right). The results verify the  sufficient condition \eqref{cond:coe:FNBC:C} for the coercivity of SPM \eqref{CDBCPG} with Caputo FNBCs;  coercivity can be maintained if $\rho_{\pm} \ge N^{2}$.

%

\begin{figure}[!tp]
\begin{minipage}{0.49\linewidth}
\begin{center}
\includegraphics[scale=0.4,angle=0]{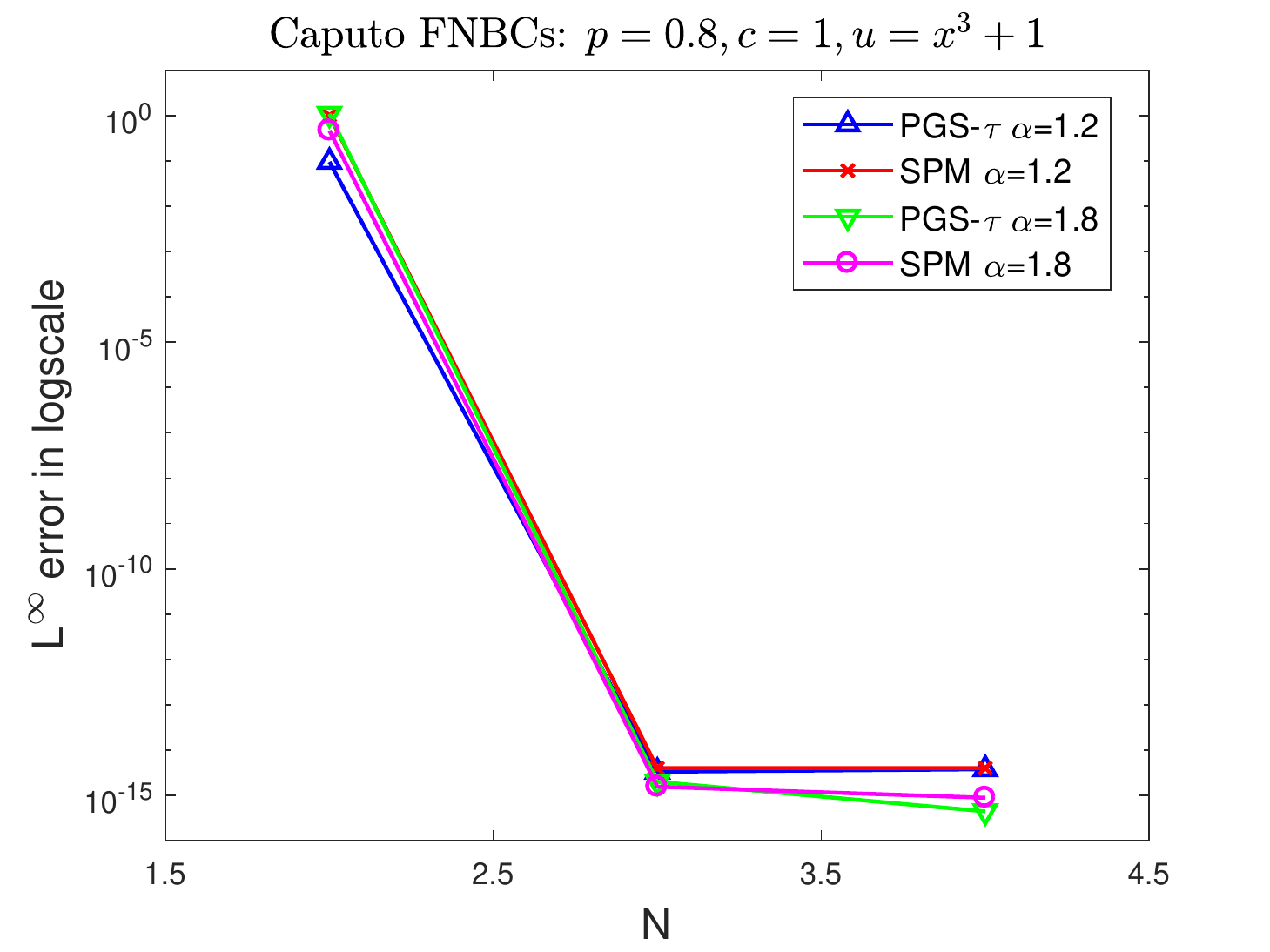}\end{center}
\end{minipage}
\begin{minipage}{0.49\linewidth}
\begin{center}
\includegraphics[scale=0.4,angle=0]{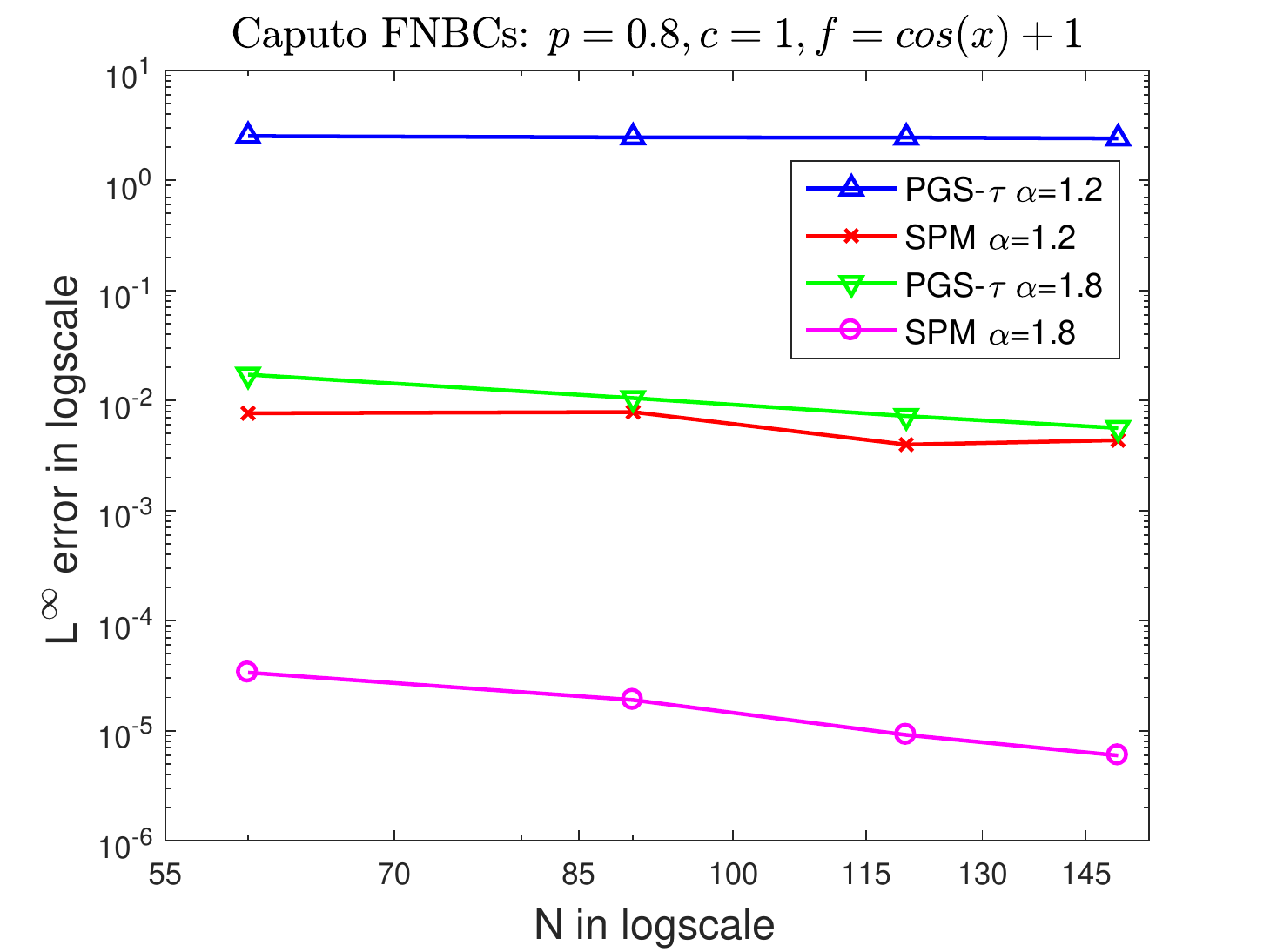}\end{center}
\end{minipage}
\begin{minipage}{0.95\linewidth}
\begin{center}
\includegraphics[scale=0.4,angle=0]{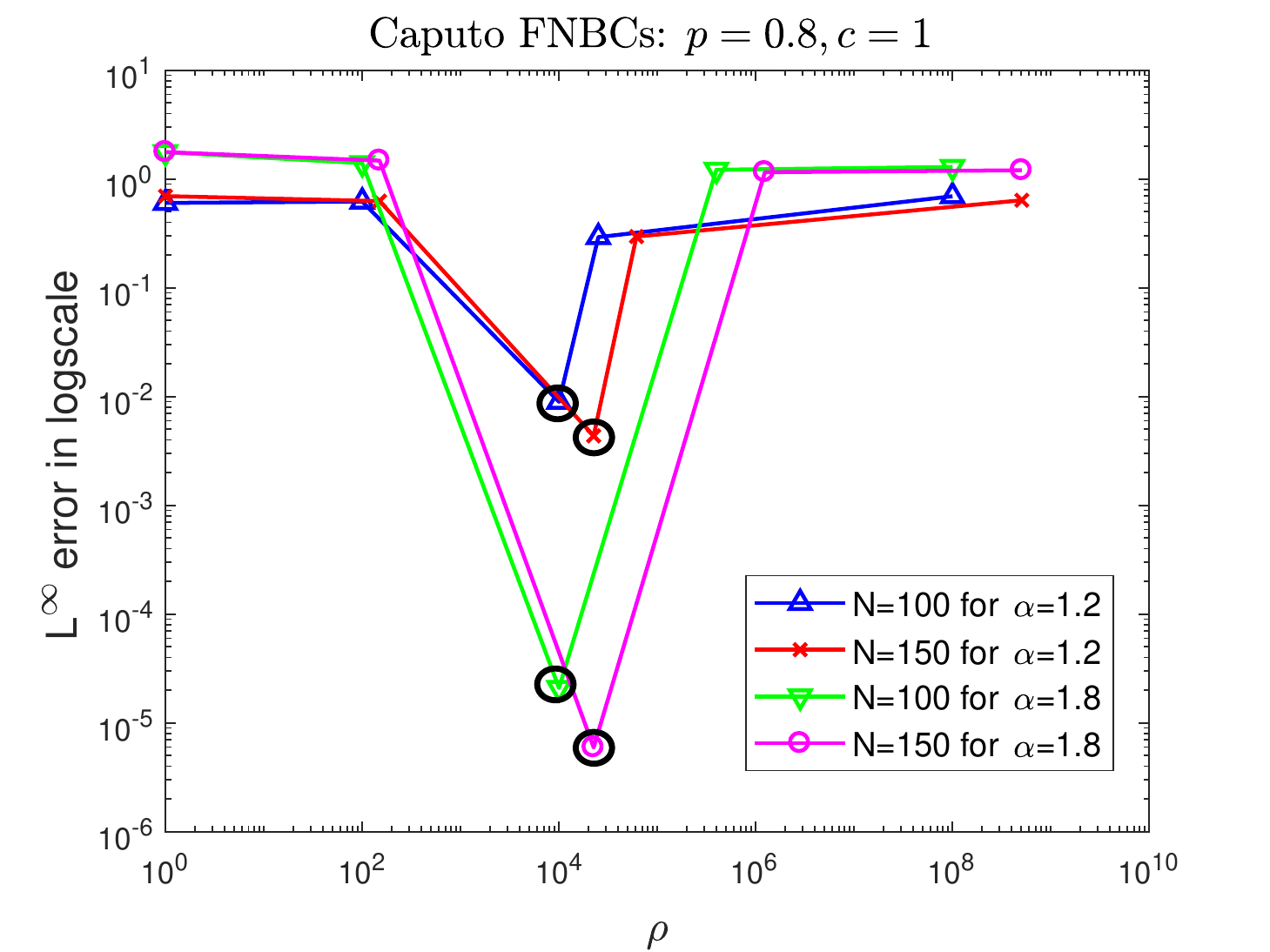}\end{center}
\end{minipage}
\caption{\scriptsize
Example \ref{ex:CaFNBC}: Convergence of $L^{\infty}$-error for SPM and PGS-$\tau$ with different values of fractional order $\alpha = 1.2,1.8$. Upper left: versus $N$ for smooth solution $u(x) = x^3+1$, upper right: versus $N$ for smooth RHF $f(x) = 1+\cos(\pi x)$, lower: versus the penalty parameter $\rho = \rho_{\pm}$. The black circles (down) correspond to the penalty parameters satisfying the coercivity sufficient condition \eqref{cond:coe:FNBC:C}.}\label{fig:CaFNBC:errcomp}
\end{figure}

%

%
%
%

\begin{figure}[!t]
\begin{minipage}{0.49\linewidth}
\begin{center}
\includegraphics[scale=0.4,angle=0]{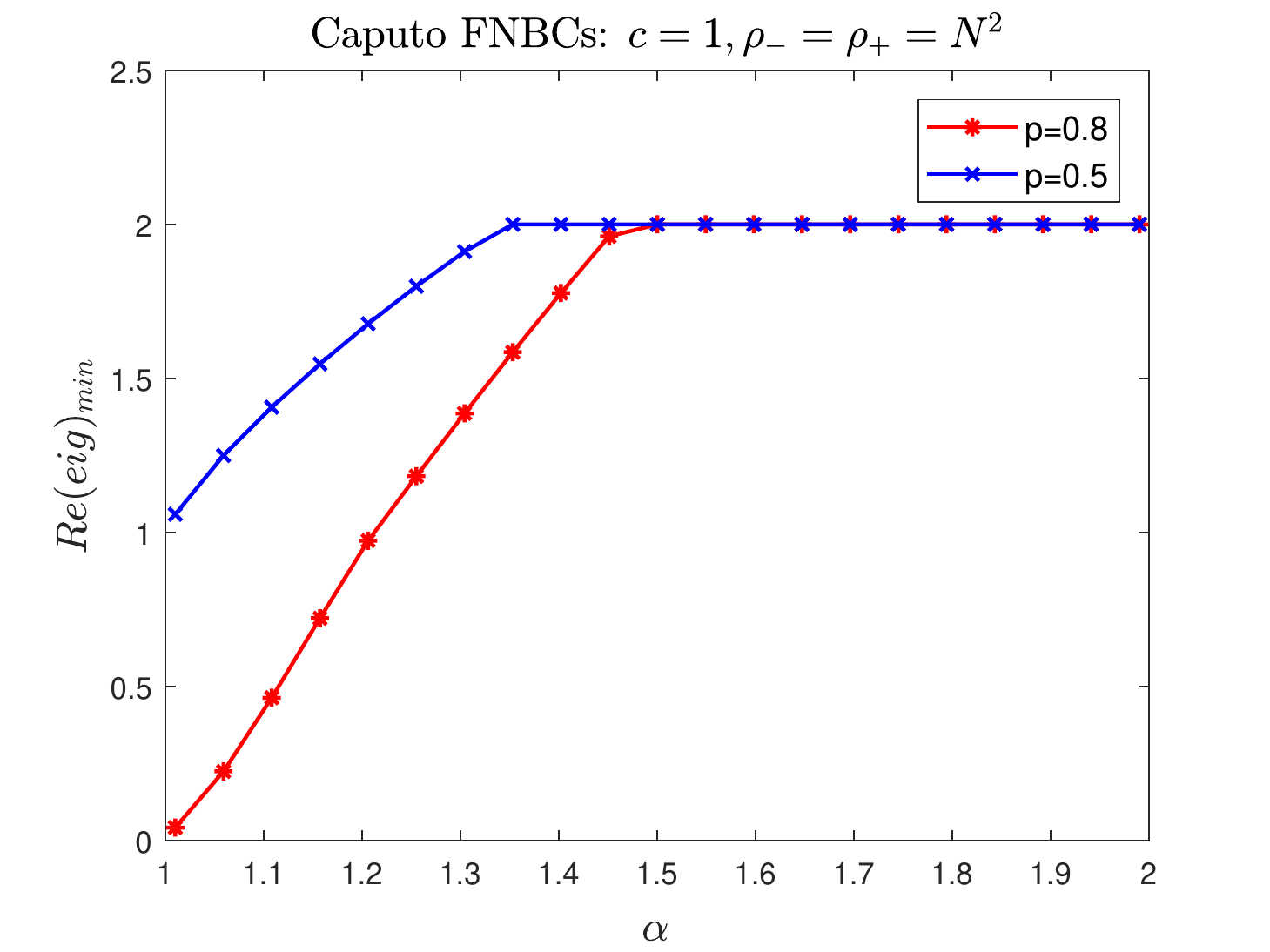}\end{center}
\end{minipage}
\begin{minipage}{0.49\linewidth}
\begin{center}
\includegraphics[scale=0.4,angle=0]{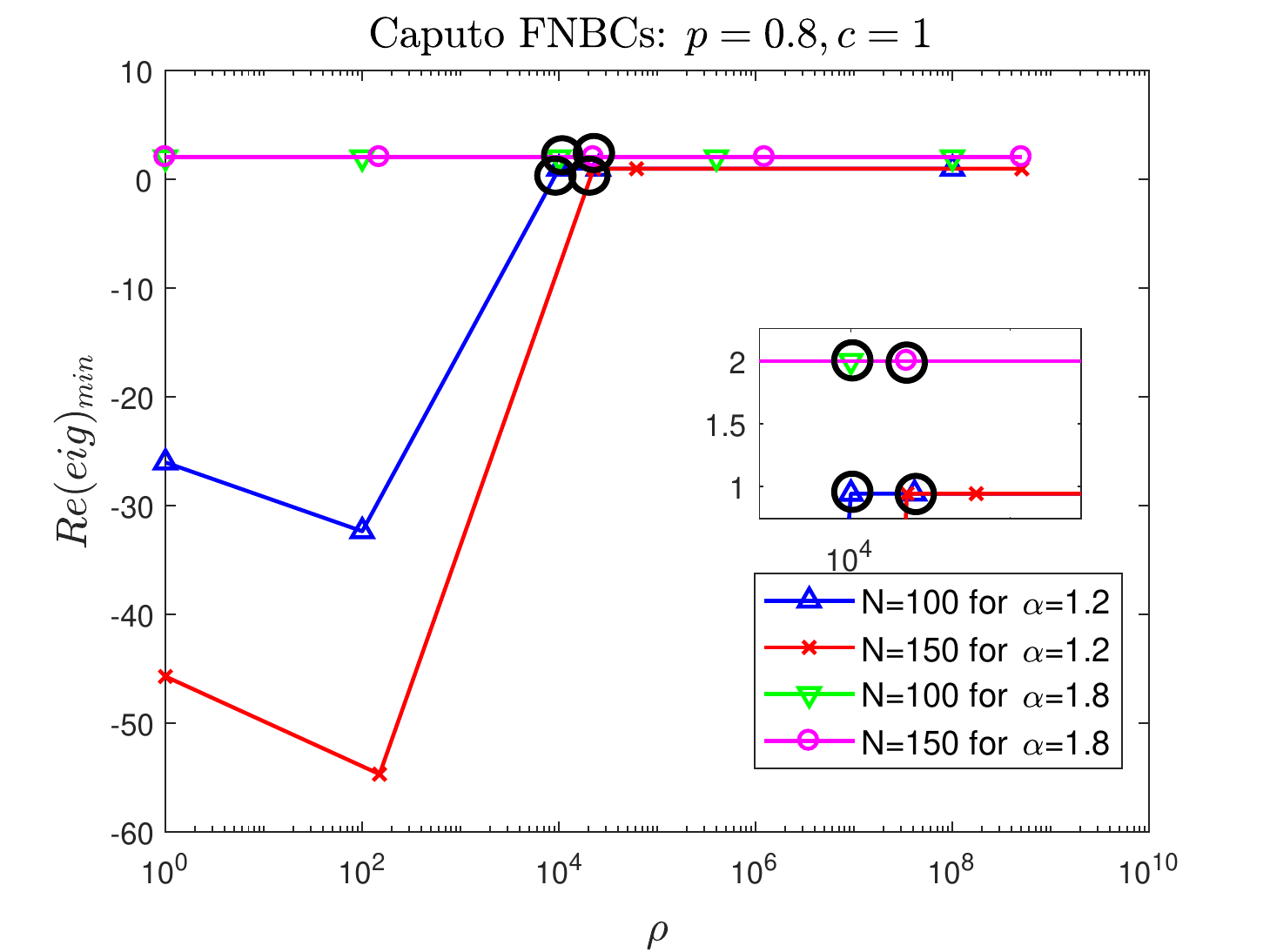}\end{center}
\end{minipage}
\caption{\scriptsize
Example \ref{ex:CaFNBC}: The minimum values of the real part  of eigenvalues versus fractional order (left) and penalty parameter $\rho = \rho_{\pm}$ (right). The black circles (right) correspond to the penalty parameters satisfying the coercivity sufficient condition \eqref{cond:coe:FNBC:C}.}\label{fig:CaFNBC:Smoothf:eig}
\end{figure}



\section{Application to the time dependent problem}\label{sec:application}
We finally solve a time dependent two-sided fractional diffusion equation considered in \cite{KellyPreprint} with reflecting (no-flux) BCs,  i.e., homogeneous R-L/Caputo FNBCs,  by using SPM.
\begin{exam}\label{ex:Twosidediffusion}
Consider equation \eqref{Twosidediffusion1}
with homogeneous FNBCs, i.e., $\mathbb{D}_{x}^{\alpha-1} u(x,t)$ $= 0$, and a tent function
\begin{align*}
  u_{0}(x)=\left\{
             \begin{array}{ll}
           5-25|x|, \quad &|x|<0.2,\\
           0, &\text{otherwise}
             \end{array}
           \right.
\end{align*}
as the initial condition with mass $M_{0}=1$, where $M_{0}=\int_{-1}^{1}u(x,t)dx$.
\end{exam}

In the numerical simulations, we use SPM for space discretization and the first order implicit Euler scheme for time discretization. Let $\delta t = T/K$ be the time step, then for $n = 0,1,\ldots, K-1$, the fully discrete scheme for \eqref{Twosidediffusion1}  is to find $u^{n+1}_{N} \in \mathbb{X}_N$, such that
\begin{equation*}
\begin{aligned}
 &\left(\frac{u_{N}^{n+1}-u_{N}^{n}}{\delta t},\varphi(v_N) \right)- \left(\frac{d}{dx}\mathbb{D}_{x}^{\alpha-1}u^{n+1}_{N}, \varphi(v_N) \right) \\
 = & -\rho_{-}\mathbb{D}_{x}^{\alpha-1}u^{n+1}_{N}(-1)(Q_{N}^{-},\varphi(v_N))
-\rho_{+}\mathbb{D}_{x}^{\alpha-1}u^{n+1}_{N}(1)(Q_{N}^{+},\varphi(v_N))\quad v_{N}\in \mathbb{X}_N, \\
\end{aligned}
\end{equation*}
where for the  R-L case $\mathbb{X}_N = \mathcal{F}_{N}^{-\mu,-\nu},\; \varphi(v_N) = \ix^{2-\a}v_{N}$ while for the Caputo case $\mathbb{X}_N = \mathbb{P}_N,\; \varphi(v_N) =v_{N}$.

For the R-L problem, the penalty functions are chosen to be the same as for Example \ref{ex:RLFNBC} and the penalty parameters are chosen to be $\rho_{-}=N^{2\mu+2}, \rho_{+}=N^{2\nu+2}$. For the Caputo problem, the penalty functions are chosen to be the same as for Example \ref{ex:CaFNBC} and the penalty parameters are chosen to be $\rho_{\pm}=N^{2}$.
For $\a = 1.5$, the numerical solutions obtained by using  $N = 100$ and time step $\delta t = 0.0025$ at different times $t = 0,\, 0.05,\,0.1,\, 2$ are shown in Figure \ref{fig:difusion:num}. The left panel ($p=0.75$) is for the R-L problem while the right panel ($p=0.25$) is for the Caputo problem. The numerical results are consistent with the observation in \cite{KellyPreprint}, where the fractional diffusion equation is solved by a finite difference method with space size $N = 1000$.  Here we show that in both cases the numerical solutions tend to the steady states. Moreover, the steady state for diffusion with R-L flux exhibits boundary singularities. In contrast,  the steady state for diffusion with Caputo flux is a constant $u_\infty = 1/2$. Moreover, we found that the mass is \emph{conserved} at all times.

\begin{figure}[!t]
\begin{minipage}{0.49\linewidth}
\begin{center}
\includegraphics[scale=0.4,angle=0]{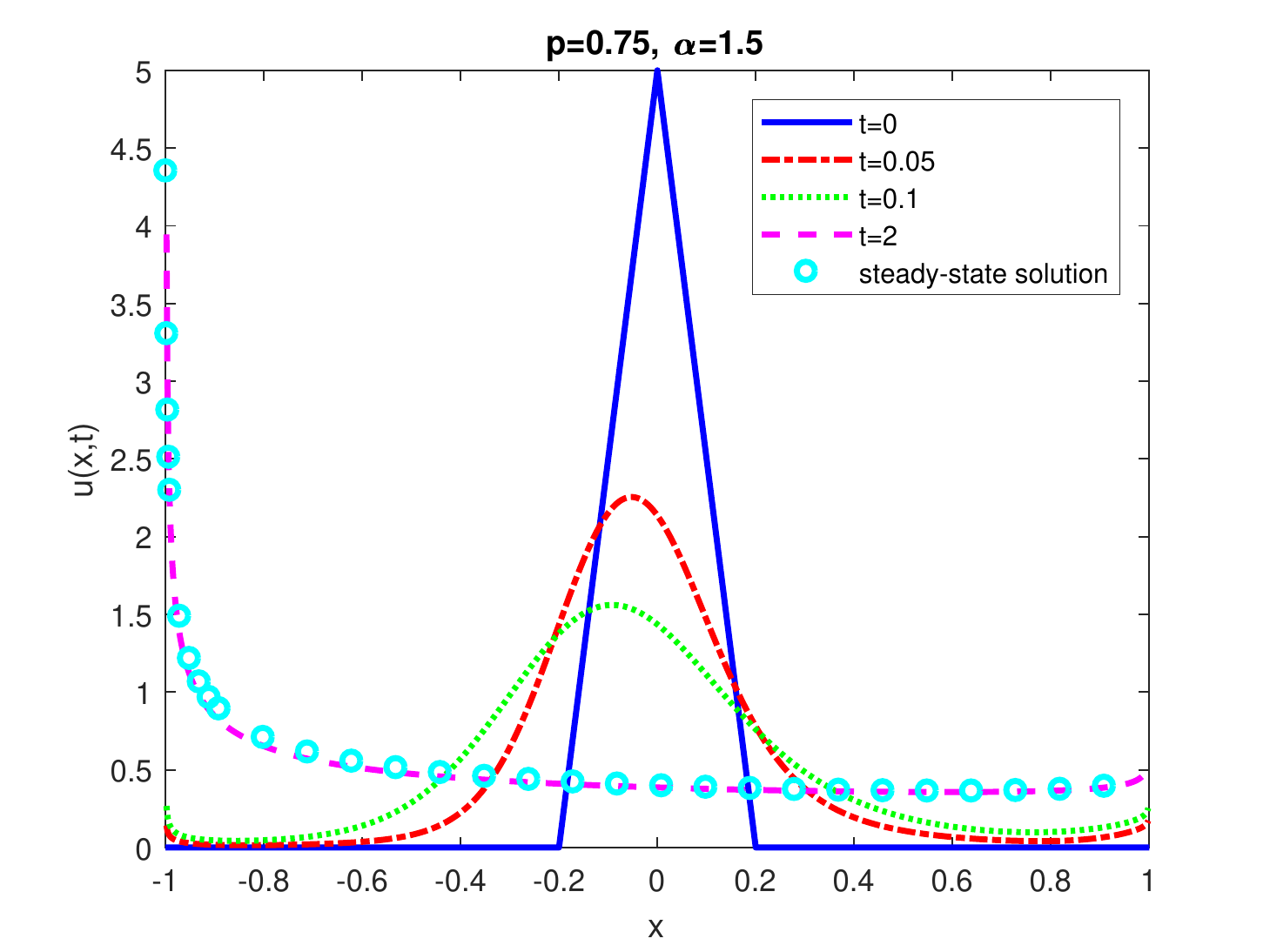}\end{center}
\end{minipage}
\begin{minipage}{0.49\linewidth}
\begin{center}
\includegraphics[scale=0.4,angle=0]{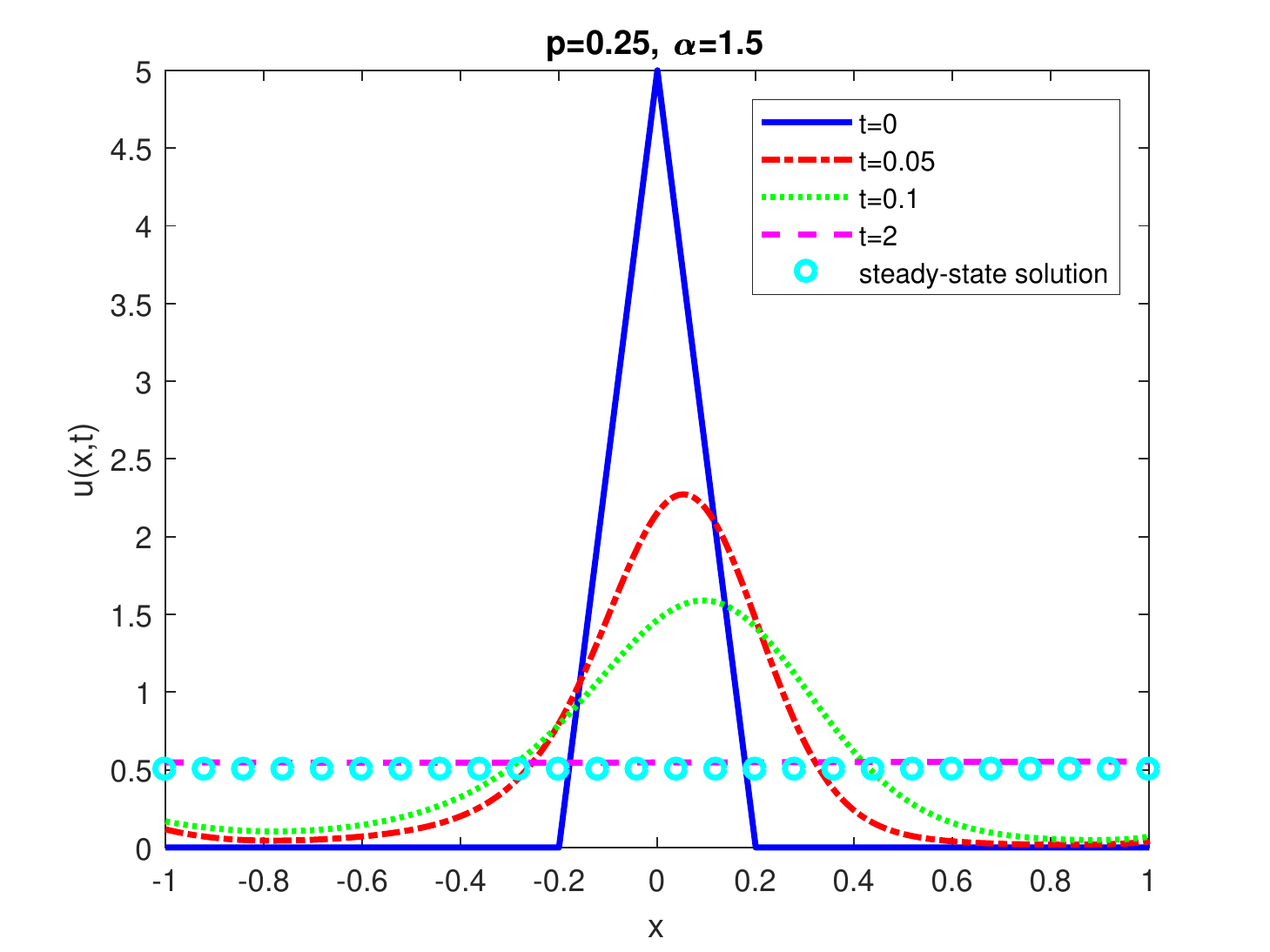}\end{center}
\end{minipage}
\caption{\scriptsize
Example \ref{ex:Twosidediffusion}: Numerical solutions  of fractional diffusion equation with $\a=1.5$ at different times $t=0, \,0.05,\, 0.1,\,2$, and the steady-state solution. Left: R-L problem ($p = 0.75$); right: Caputo problem ($p = 0.25$). }\label{fig:difusion:num}
\end{figure}

\section{Conclusion}\label{sec:conclusion}
We proposed in this paper a Galerkin spectral penalty method (SPM) for the two-sided FDE with \emph{general} BCs using poly-fractonomial or polynomial approximation. Specifically, we used orthogonal poly-fractonomials, whose spectral relationship with fractional operators has been documented in \cite{MohsenKar13, MaoKar18}, as basis functions for the conservative R-L FDEs while used orthogonal polynomials as basis functions for the conservative Caputo FDEs.
We first established the well-posedness of the weak problem of the conservative R-L problem with the fractional Dirichlet/Neumann boundary conditions.
Subsequently, we formulated SPM for the conservative R-L and Caputo FDEs.
We also analyzed sufficient conditions for the coercivity of different types of fractional problems, and moreover provided estimates of the penalty parameters and the associated functions for all cases except for Caputo FDEs with local Dirichlet boundary conditions.
We showed by several numerical examples that SPM can deliver superior  accuracy  compared with the Petrov-Galerkin spectral tau method, and verified the theoretical estimates for the sufficient conditions for {coercivity} as well as  the estimates for  the penalty parameters. For the aforementioned case not covered by the theory, we conducted numerical experiments and proposed penalty parameters that scale as $N^2$ or $N^3$, where $N$ is the polynomial order. In general, as long as we choose the value of the penalty parameter greater than the threshold suggested by the theory, the accuracy of the approximation does not depend on the precise value of the penalty parameters for the Dirichlet type boundary conditions. In contrast, the approximation error for the Neumann type boundary conditions depends strongly on the specific value of the penalty parameter and increases sharply away from the theoretical value, especially  for Caputo FDEs.
Finally, we solved the time dependent  fractional diffusion equation by using  SPM and verified the conservation property, hence confirming the accurate imposition of fractional Neumann boundary conditions for this case.

Overall, we found that in the absence of reaction term in the FDEs, we can obtain exponential decay of the numerical error for smooth right hand side in the case of R-L FDEs, irrespectively of the boundary conditions although the solution in this case is singular. In contrast, we found that we can obtain  exponential decay of the numerical error for smooth solutions of Caputo FDEs, irrespectively of the boundary conditions. We note that in the latter case, exponential convergence can be obtained even in the presence of reaction term.
Here we considered one-dimensional FDEs, but the penalty implementation can be also extended to multi-dimensions as well as other discretizations (collocation, finite elements, etc.). However, large values of the penalty parameter may adversely  affect the condition number of the linear system and hence the computational complexity for iterative solvers in large scale problems.
Also, the presence of a penalty term may destroy the sparsity of the stiffness matrix obtained in \cite{MaoKar18} when using the poly-fractonomial approximation.

\appendix

\section{Technical results}\label{sec:Tech:RePr}

We collect a number of elementary technical results that were used to prove the well-posedness of the weak problem \eqref{wk:fDBC} and \eqref{wk:fNBC}.

Define the fractional integral space and norm: for $\sigma \ge 0$
\begin{equation*}
    J^{-\sigma}(\Lambda) := \left\{ v: |\omega|^{-\sigma} \mathcal{F}^c(v) \in L^2(\mathbb{R})\right\},
\;
    \|v\|_{J^{-\sigma}(\Lambda)} : = \||\omega|^{-\sigma} \mathcal{F}^c(v)\|_{L^2(\mathbb{R})},
\end{equation*}
where $\mathcal{F}^c$ denotes the incomplete Fourier transform given by (cf. \cite[Equation (18)]{Ma.J2017JSC})
\begin{equation*}
    \mathcal{F}^c(v):= \frac{1}{2\pi}\int_{\Lambda} v(x)e^{{\rm i}wx} dx.
\end{equation*}
Denote $\widetilde{v}(x), x\in\mathbb{R}$ the zero extension of $v(x),x\in [a,b]$, namely, $\widetilde{v}(x) = v(x)$ if $x\in [a,b]$ and 0 otherwise.
For $\sigma\ge 0$, we have the following result.
\begin{Lem}\label{lem:equv}
  For $\sigma\ge 0$, we have
\begin{equation}\label{eqn:equv}
    (\ilx^{\sigma}v, \irx^{\sigma}v) = \cos(\pi \sigma)\|v\|_{J^{-\sigma}(\Lambda)}^2.
\end{equation}
\end{Lem}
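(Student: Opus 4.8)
The plan is to compute both sides by passing to the Fourier side, exploiting the fact that on the real line the one-sided Riemann--Liouville integrals of the zero-extension $\widetilde{v}$ are Fourier multipliers. Concretely, for $v$ supported in $\Lambda=(a,b)$, one has the classical identities $\widehat{\ilxinf^{\sigma}\widetilde v}(\omega) = (\mathrm{i}\omega)^{-\sigma}\,\widehat{\widetilde v}(\omega)$ and $\widehat{\irxinf^{\sigma}\widetilde v}(\omega) = (-\mathrm{i}\omega)^{-\sigma}\,\widehat{\widetilde v}(\omega)$, where the branch of the power is chosen so that $(\pm\mathrm{i}\omega)^{-\sigma} = |\omega|^{-\sigma} e^{\mp\,\mathrm{i}\,\mathrm{sgn}(\omega)\,\pi\sigma/2}$, and where $\widehat{\widetilde v}(\omega)=\mathcal F^c(v)(\omega)$ since $\widetilde v$ is just the zero extension and $\mathcal F^c$ is exactly the Fourier transform restricted to integration over $\Lambda$. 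First I would note that, because $\ilx^{\sigma}v$ and $\irx^{\sigma}v$ on $\Lambda$ agree with the restrictions to $\Lambda$ of $\ilxinf^{\sigma}\widetilde v$ and $\irxinf^{\sigma}\widetilde v$, and since the latter may fail to be supported in $\Lambda$ but the $L^2(\Lambda)$ inner product only sees $\Lambda$, I must be slightly careful: the cleanest route is to observe that $(\ilx^{\sigma}v,\irx^{\sigma}v)_{L^2(\Lambda)} = (\ilxinf^{\sigma}\widetilde v,\irxinf^{\sigma}\widetilde v)_{L^2(\mathbb R)}$ because of the support properties of the two one-sided integrals of a compactly supported function (the left integral is supported on $[a,\infty)$, the right on $(-\infty,b]$, their product off $\Lambda$ vanishes pointwise when paired appropriately — more precisely one uses the adjoint/semigroup identities \eqref{eqn:semig}--\eqref{eqn:fintpart} to rewrite $(\ilx^{\sigma}v,\irx^{\sigma}v) = (\ilx^{2\sigma}v, v)$, which is genuinely an integral over $\Lambda$ only).

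With that reduction in hand, I would apply Parseval's theorem to $(\ilxinf^{\sigma}\widetilde v,\irxinf^{\sigma}\widetilde v)_{L^2(\mathbb R)}$, which equals
\begin{equation*}
\int_{\mathbb R} (\mathrm i\omega)^{-\sigma}\,\mathcal F^c(v)(\omega)\;\overline{(-\mathrm i\omega)^{-\sigma}\,\mathcal F^c(v)(\omega)}\,d\omega
= \int_{\mathbb R} (\mathrm i\omega)^{-\sigma}\overline{(-\mathrm i\omega)^{-\sigma}}\,|\mathcal F^c(v)(\omega)|^2\,d\omega .
\end{equation*}
The key computation is the symbol: for $\omega>0$, $(\mathrm i\omega)^{-\sigma}\overline{(-\mathrm i\omega)^{-\sigma}} = |\omega|^{-\sigma}e^{-\mathrm i\pi\sigma/2}\cdot |\omega|^{-\sigma}e^{-\mathrm i\pi\sigma/2} = |\omega|^{-2\sigma}e^{-\mathrm i\pi\sigma}$; wait — one must take the complex conjugate correctly: $\overline{(-\mathrm i\omega)^{-\sigma}} = \overline{|\omega|^{-\sigma}e^{+\mathrm i\pi\sigma/2}} = |\omega|^{-\sigma}e^{-\mathrm i\pi\sigma/2}$, so the product is $|\omega|^{-2\sigma}e^{-\mathrm i\pi\sigma/2}e^{-\mathrm i\pi\sigma/2}$... this is where care is needed, and in fact pairing $\mathrm i\omega$ with the conjugate of $-\mathrm i\omega$ must yield a real answer by symmetry of the integral in $\omega\to-\omega$, namely $\cos(\pi\sigma)|\omega|^{-2\sigma}$. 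I would carry out this phase bookkeeping for $\omega>0$ and $\omega<0$ separately, observe the imaginary parts cancel upon integrating the even function $|\mathcal F^c(v)|^2$ against an odd imaginary remainder (or, more simply, take real parts since the left side is manifestly real), and conclude that the integral equals $\cos(\pi\sigma)\int_{\mathbb R}|\omega|^{-2\sigma}|\mathcal F^c(v)(\omega)|^2\,d\omega = \cos(\pi\sigma)\,\||\omega|^{-\sigma}\mathcal F^c(v)\|_{L^2(\mathbb R)}^2 = \cos(\pi\sigma)\|v\|_{J^{-\sigma}(\Lambda)}^2$, which is exactly \eqref{eqn:equv}.

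The main obstacle I anticipate is not the Fourier-multiplier identity itself but the careful justification that $(\ilx^{\sigma}v,\irx^{\sigma}v)_{L^2(\Lambda)}$ — an inner product of functions only defined and integrated on the bounded interval $\Lambda$ — coincides with the whole-line inner product of the one-sided fractional integrals of $\widetilde v$, together with the branch-cut/phase bookkeeping for $(\pm\mathrm i\omega)^{-\sigma}$ when $\sigma$ is non-integer (so that the symbol $(\mathrm i\omega)^{-\sigma}\overline{(-\mathrm i\omega)^{-\sigma}}$ genuinely produces the factor $\cos(\pi\sigma)$ and nothing else). I would handle the first point via the fractional integration-by-parts formula \eqref{eqn:fintpart}, rewriting the left-hand side as $(\ilx^{2\sigma}v,v)$ before transforming, which sidesteps any worry about the supports of the individual one-sided integrals; and I would handle the branch issue by fixing the principal branch and verifying the phase computation on each half-line, using that the left side is real to discard spurious imaginary contributions. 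Everything else is a routine application of Parseval and the definition of the $J^{-\sigma}(\Lambda)$ norm.
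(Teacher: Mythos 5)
Your proposal is correct and follows essentially the same route as the paper: extend $v$ by zero, identify the inner product over $\Lambda$ with one over $\mathbb{R}$, and evaluate on the Fourier side to obtain the factor $\cos(\pi\sigma)$ against $\||\omega|^{-\sigma}\mathcal{F}^c(v)\|_{L^2(\mathbb{R})}^2$. The only difference is that the paper outsources the Fourier-multiplier/Plancherel computation to \cite[Lemma 2.3]{Ma.J2017JSC}, whereas you carry out that symbol calculation explicitly and also justify the reduction to the whole-line inner product (via support properties or \eqref{eqn:fintpart}), which the paper asserts without comment.
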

\begin{proof}
Let $\widetilde{v}(x)$ be the zero extension of $v(x)$, then
we have
\begin{equation*}
\begin{aligned}
  (\ilx^{\sigma}v, \irx^{\sigma}v) = (\ilxinf^{\sigma}\widetilde{v}, \irxinf^{\sigma}\widetilde{v})
    &= \cos(\pi \sigma) \||\omega|^{-\sigma} \mathcal{F}(\widetilde{v})\|_{L^2(\mathbb{R})}^2\\
    & = \cos(\pi \sigma)\||\omega|^{-\sigma} \mathcal{F}^c(v)\|_{L^2(\mathbb{R})}^2,
\end{aligned}
\end{equation*}
where $\mathcal{F}(\cdot)$ is the Fourier transform.
The second equality of the above equation can be found in \cite[Lemma 2.3]{Ma.J2017JSC}.
Thus, the equality \eqref{eqn:equv} holds true.
\end{proof}

The following lemma shows that the space $ J^{-\sigma}(\Lambda)$ is embedded into the spaces $J_{l}^{-\sigma}(\Lambda)$ and $J_{r}^{-\sigma}(\Lambda)$: \begin{Lem}\label{lem:intequvi}
For $\sigma \ge 0$, the space $ J^{-\sigma}(\Lambda)$ is embedded into the spaces $J_{l}^{-\sigma}(\Lambda)$ and $J_{r}^{-\sigma}(\Lambda)$ satisfying
\begin{equation}\label{eqn:intequvi}
    \|v\|_{J_{l}^{-\sigma}(\Lambda)} \le \|v\|_{J^{-\sigma}(\Lambda)} \text{ and } \|v\|_{J_{r}^{-\sigma}(\Lambda)} \le \|v\|_{J^{-\sigma}(\Lambda)}.
\end{equation}
\end{Lem}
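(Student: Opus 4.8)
The plan is to reduce the inequality to a pointwise comparison of Fourier symbols, exactly in the spirit of the computation that appears in the proof of Lemma \ref{lem:equv}. First I would pass to the zero extension $\widetilde{v}$ of $v$ to $\mathbb{R}$, so that $\ilx^{\sigma}v$ and $\irx^{\sigma}v$ agree (on $\Lambda$, and in fact on $\mathbb{R}$ up to the obvious support considerations) with the full-line Riemann--Liouville integrals $\ilxinf^{\sigma}\widetilde{v}$ and $\irxinf^{\sigma}\widetilde{v}$. The key classical fact is that these full-line fractional integrals act as Fourier multipliers: $\mathcal{F}(\ilxinf^{\sigma}\widetilde{v})(\omega) = (\mathrm{i}\omega)^{-\sigma}\mathcal{F}(\widetilde{v})(\omega)$ and $\mathcal{F}(\irxinf^{\sigma}\widetilde{v})(\omega) = (-\mathrm{i}\omega)^{-\sigma}\mathcal{F}(\widetilde{v})(\omega)$, with the appropriate branch of the power function. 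In particular $|(\pm\mathrm{i}\omega)^{-\sigma}| = |\omega|^{-\sigma}$.

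Next I would apply Plancherel's theorem to get
\begin{equation*}
    \|v\|_{J_{l}^{-\sigma}(\Lambda)}^2 = \|\ilx^{\sigma}v\|_{L^2(\Lambda)}^2 \le \|\ilxinf^{\sigma}\widetilde{v}\|_{L^2(\mathbb{R})}^2 = \big\||\omega|^{-\sigma}\mathcal{F}(\widetilde{v})\big\|_{L^2(\mathbb{R})}^2 = \big\||\omega|^{-\sigma}\mathcal{F}^c(v)\big\|_{L^2(\mathbb{R})}^2 = \|v\|_{J^{-\sigma}(\Lambda)}^2,
\end{equation*}
where the inequality is simply the fact that the $L^2(\Lambda)$ norm is dominated by the $L^2(\mathbb{R})$ norm (since $\Lambda \subset \mathbb{R}$), and the last two equalities use $|(\mathrm{i}\omega)^{-\sigma}| = |\omega|^{-\sigma}$ and the identification $\mathcal{F}(\widetilde{v}) = \mathcal{F}^c(v)$. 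The estimate for $\|v\|_{J_{r}^{-\sigma}(\Lambda)}$ is obtained in exactly the same way using the right-sided multiplier $(-\mathrm{i}\omega)^{-\sigma}$. This also shows that if $v \in J^{-\sigma}(\Lambda)$ then $\ilx^{\sigma}v, \irx^{\sigma}v \in L^2(\Lambda)$, giving the claimed embedding (not merely the norm bound).

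The only genuinely delicate point is the justification of the multiplier identities and of $\mathcal{F}(\ilxinf^{\sigma}\widetilde{v})\in L^2$ when $\sigma$ is not small: since $\widetilde{v}$ has compact support one cannot invoke the multiplier theorem naively at $\omega = 0$, but $|\omega|^{-\sigma}\mathcal{F}^c(v)$ being in $L^2(\mathbb{R})$ is precisely the defining hypothesis of $J^{-\sigma}(\Lambda)$, so there is no circularity: one assumes $v \in J^{-\sigma}(\Lambda)$ and derives membership in $J_l^{-\sigma}$ and $J_r^{-\sigma}$. Everything else — the passage to zero extensions, Plancherel, and the domain monotonicity of the $L^2$ norm — is routine, and indeed these are the same ingredients already used in the proof of Lemma \ref{lem:equv}, so I would simply cite \cite[Lemma 2.3]{Ma.J2017JSC} for the symbol computation and keep the argument short.
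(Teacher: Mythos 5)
Your argument is correct and is essentially the paper's own proof: zero extension, the domain monotonicity $\|\cdot\|_{L^2(\Lambda)}\le\|\cdot\|_{L^2(\mathbb{R})}$, and the Fourier-symbol identity $\|\ilxinf^{\sigma}\widetilde{v}\|_{L^2(\mathbb{R})}=\||\omega|^{-\sigma}\mathcal{F}^c(v)\|_{L^2(\mathbb{R})}$ quoted from \cite{Ma.J2017JSC} (the paper cites its equation (25) rather than Lemma 2.3, but the content is the same). Your additional remark that the defining hypothesis $v\in J^{-\sigma}(\Lambda)$ removes any circularity at $\omega=0$ is a fine, if optional, clarification.
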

\begin{proof}
%
For the first estimate of \eqref{eqn:intequvi}, we have
\begin{equation*}
  \|v\|_{J_{l}^{-\sigma}(\Lambda)} = \|\ilx^{\sigma}v\|_{L^2(\Lambda)}
  \le  \|\ilxinf^{\sigma}\widetilde{v}\|_{L^2(\mathbb{R})}
    = \||\omega|^{-\sigma} \mathcal{F}^c(v) \|_{L^2(\mathbb{R})}, 
\end{equation*}
where the last equality  follows from \cite[equation (25)]{Ma.J2017JSC}. Then we obtain the first estimate of \eqref{eqn:intequvi}.
Similarly, we can obtain the second estimate of \eqref{eqn:intequvi}.
\end{proof}

We show the following result of the boundedness of the fractional integral operators $\ilx^{\sigma}$ and $\irx^{\sigma}$ for $\sigma \ge 0$~(see \cite[Theorem 2.6]{samko1993fractional}):
\begin{Lem}\label{lem:ixbdd}
The fractional integral operators $\ilx^{\sigma}$ and $\irx^{\sigma}$ with $\sigma \ge 0$ are bounded in $L^q(\Lambda)\; (1\le q\le \infty)$:
\begin{equation}\label{eqn:ixbdd}
    \|\ilx^{\sigma} v\|_{L^q(\Lambda)} \le K(\sigma)\|v\|_{L^q(\Lambda)}, \text{ and } \|\irx^{\sigma} v\|_{L^q(\Lambda)} \le K(\sigma)\|v\|_{L^q(\Lambda)},
\end{equation}
where $K(\sigma) = \frac{2^{\sigma} }{\sigma |\Gamma(\sigma)|}$.
\end{Lem}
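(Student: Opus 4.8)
The plan is to identify $\ilx^{\sigma}$ with a truncated convolution and then invoke Young's convolution inequality, reading off the constant from the $L^1$ norm of the kernel. Concretely, I would first extend $v$ by zero to all of $\mathbb{R}$, write $\widetilde v$ for that extension, and observe that for $x\in\Lambda$
\[
\ilx^{\sigma}v(x)=\frac{1}{\Gamma(\sigma)}\int_{-1}^{x}\frac{v(y)}{(x-y)^{1-\sigma}}\,dy=(k_{\sigma}\ast\widetilde v)(x),\qquad k_{\sigma}(t):=\frac{1}{\Gamma(\sigma)}\,t^{\sigma-1}\chi_{(0,2)}(t),
\]
where truncating the kernel at $t=2$ costs nothing because $0<x-y<2$ for all $x,y\in\Lambda$. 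Young's inequality with exponents $(1,q,q)$ then gives, for every $1\le q\le\infty$,
\[
\|\ilx^{\sigma}v\|_{L^q(\Lambda)}\le\|k_{\sigma}\ast\widetilde v\|_{L^q(\mathbb{R})}\le\|k_{\sigma}\|_{L^1(\mathbb{R})}\,\|\widetilde v\|_{L^q(\mathbb{R})}=\|k_{\sigma}\|_{L^1(\mathbb{R})}\,\|v\|_{L^q(\Lambda)}.
\]

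The second step is the explicit computation of the kernel norm: since $\sigma>0$,
\[
\|k_{\sigma}\|_{L^1(\mathbb{R})}=\frac{1}{|\Gamma(\sigma)|}\int_0^{2}t^{\sigma-1}\,dt=\frac{2^{\sigma}}{\sigma\,|\Gamma(\sigma)|}=K(\sigma),
\]
which is exactly the claimed constant and yields the first inequality in \eqref{eqn:ixbdd}. For the right fractional integral I would not repeat the argument: the change of variable $x\mapsto-x$ maps $\Lambda$ onto itself, interchanges $\ilx^{\sigma}$ and $\irx^{\sigma}$ (one checks $\ilx^{\sigma}[v(-\cdot)](x)=\irx^{\sigma}v(-x)$), and is an isometry of each $L^q(\Lambda)$, so the bound transfers verbatim. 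Altogether this is precisely \cite[Theorem 2.6]{samko1993fractional}, which I would cite after presenting the convolution reduction.

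There is no genuine difficulty here; the only points deserving a line of care are the legitimacy of truncating the convolution kernel to $(0,2)$ (so that the inequality on $\mathbb{R}$ may be applied to the zero extension without loss) and the endpoint exponents $q=1$ and $q=\infty$, both of which lie inside the standard range of Young's inequality — for $q=\infty$ one may alternatively argue pointwise by H\"older with $k_{\sigma}\in L^1$. I expect the write-up to be a short paragraph.
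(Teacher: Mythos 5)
Your argument is correct and is essentially the paper's own treatment: the paper offers no proof of this lemma beyond citing \cite[Theorem 2.6]{samko1993fractional}, and your convolution-plus-Young's-inequality computation (with the kernel truncated to $(0,2)$, giving $\|k_{\sigma}\|_{L^1}=2^{\sigma}/(\sigma\Gamma(\sigma))=K(\sigma)$, and the reflection $x\mapsto -x$ for $\irx^{\sigma}$) is precisely the standard proof of that cited theorem specialized to $b-a=2$. The only cosmetic point is the endpoint $\sigma=0$ included in the statement, where the operator is the identity and $K(\sigma)$ should be read as $2^{\sigma}/\Gamma(\sigma+1)\to 1$, so nothing is lost.
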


For $0\le s\le t$, we have the following result:
\begin{Lem}
For $0\le s\le t$, it holds
\begin{equation}\label{eqn:intcoerlr}
    \|v\|_{J_{l}^{-t}(\Lambda)} \le C\|v\|_{J_{l}^{-s}(\Lambda)}, \quad  \|v\|_{J_{r}^{-t}(\Lambda)} \le C\|v\|_{J_{r}^{-s}(\Lambda)}.
\end{equation}
where $C$ is a constant.
Moreover, if $s<1/2$, we have
\begin{equation}\label{eqn:intcoer}
    (\ilx^s v, \irx^s v) \ge \tilde{C}\|v\|_{J_{p}^{-t}(\Lambda)}^2 ,
\end{equation}
where $\tilde{C}$ is a  constant.
\end{Lem}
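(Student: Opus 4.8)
The plan is to derive both estimates from results already established in this appendix: the semigroup property \eqref{eqn:semig}, the $L^q$-boundedness of the one-sided Riemann--Liouville integrals (Lemma \ref{lem:ixbdd}), the fact that $J^{-\sigma}(\Lambda)$ is continuously embedded in $J_l^{-\sigma}(\Lambda)$ and $J_r^{-\sigma}(\Lambda)$ (Lemma \ref{lem:intequvi}), and the cosine identity \eqref{eqn:equv} (Lemma \ref{lem:equv}). For \eqref{eqn:intcoerlr}, the case $s=t$ is trivial, since both sides agree; so assume $0\le s<t$. By the semigroup property, $\ilx^{t}v=\ilx^{t-s}(\ilx^{s}v)$, and applying Lemma \ref{lem:ixbdd} with $q=2$ and order $t-s>0$ gives $\|v\|_{J_l^{-t}(\Lambda)}=\|\ilx^{t-s}(\ilx^{s}v)\|_{L^2(\Lambda)}\le K(t-s)\|\ilx^{s}v\|_{L^2(\Lambda)}=K(t-s)\|v\|_{J_l^{-s}(\Lambda)}$, with the explicit constant $K(t-s)=2^{t-s}/\big((t-s)|\Gamma(t-s)|\big)$. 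The right-sided inequality follows verbatim with $\irx$ in place of $\ilx$.

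For \eqref{eqn:intcoer}, I would start from Lemma \ref{lem:equv}, which gives $(\ilx^{s}v,\irx^{s}v)=\cos(\pi s)\|v\|_{J^{-s}(\Lambda)}^2$; since $0\le s<1/2$, the factor $\cos(\pi s)$ is strictly positive, so it remains only to bound $\|v\|_{J^{-s}(\Lambda)}$ from below by a multiple of $\|v\|_{J_p^{-t}(\Lambda)}$. Recalling $\ix^{t}=C_{\alpha,p}\big(p\,\ilx^{t}+(1-p)\,\irx^{t}\big)$ from \eqref{PreJa2.11} and \eqref{defn:RL:short}, and using $0\le p\le 1$, the triangle inequality yields $\|v\|_{J_p^{-t}(\Lambda)}=\|\ix^{t}v\|_{L^2(\Lambda)}\le |C_{\alpha,p}|\big(p\,\|v\|_{J_l^{-t}(\Lambda)}+(1-p)\,\|v\|_{J_r^{-t}(\Lambda)}\big)$. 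Now apply \eqref{eqn:intcoerlr} (valid since $s\le t$) followed by the embedding \eqref{eqn:intequvi}: $\|v\|_{J_l^{-t}(\Lambda)}\le C\|v\|_{J_l^{-s}(\Lambda)}\le C\|v\|_{J^{-s}(\Lambda)}$, and similarly for the right-sided norm. Hence $\|v\|_{J_p^{-t}(\Lambda)}\le |C_{\alpha,p}|\,C\,\|v\|_{J^{-s}(\Lambda)}$, which combined with the cosine identity gives $(\ilx^{s}v,\irx^{s}v)\ge \cos(\pi s)\,\big(|C_{\alpha,p}|\,C\big)^{-2}\,\|v\|_{J_p^{-t}(\Lambda)}^2$, i.e. \eqref{eqn:intcoer} with $\tilde C=\cos(\pi s)/\big(|C_{\alpha,p}|\,C\big)^2$.

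There is no genuine obstacle here; the lemma is essentially a repackaging of the standard mapping properties of the Riemann--Liouville integrals on a bounded interval together with \eqref{eqn:equv}. The only points requiring a little care are the bookkeeping of the constants---in particular checking that $C_{\alpha,p}\neq 0$ under the standing hypotheses $1<\alpha<2$ and $\alpha-2<\mu,\nu<0$, so that $\tilde C$ is finite and strictly positive; this follows from \eqref{PreJa2.10} since $\sin\pi\alpha$, $\sin\pi\mu$ and $\sin\pi\nu$ are all negative in that range---and the degenerate endpoint $s=t$ in \eqref{eqn:intcoerlr}, which must be treated separately because $K(0)$ is not defined, whereas $\ilx^{0}$ is simply the identity.
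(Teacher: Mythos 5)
Your proposal is correct and follows essentially the same route as the paper: the semigroup property plus the $L^2$-boundedness of Lemma \ref{lem:ixbdd} for \eqref{eqn:intcoerlr}, and then bounding $\|v\|_{J_p^{-t}(\Lambda)}$ by the left/right norms, passing to $\|v\|_{J^{-s}(\Lambda)}$ via \eqref{eqn:intcoerlr} and \eqref{eqn:intequvi}, and invoking \eqref{eqn:equv} with $\cos(\pi s)>0$ for $s<1/2$. Your extra care with the constant $C_{\alpha,p}$ and the degenerate endpoint $s=t$ is a harmless refinement of the same argument.
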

\begin{proof}
We begin by showing the first estimate of \eqref{eqn:intcoerlr}.
By virtue of \eqref{eqn:ixbdd} and \eqref{eqn:semig}, we arrive at
\begin{equation*}
    \|v\|_{J_{l}^{-t}(\Lambda)}=\|\ilx^{t}v\|_{L^2(\Lambda)} = \|\ilx^{t-s}\, \ilx^{s}v\|_{L^2(\Lambda)} \le C  \|\ilx^{s}v\|_{L^2(\Lambda)} = C\|v\|_{J_{l}^{-s}(\Lambda)}.
\end{equation*}
Similarly, we can obtain the second estimate of \eqref{eqn:intcoerlr}.

We now turn to the estimate \eqref{eqn:intcoer}.
By \eqref{defn:fintspl:norm}, \eqref{defn:fintsps}, the definition of the two-sided fractional integral and the estimates \eqref{eqn:intequvi} and \eqref{eqn:intcoerlr}, we arrive at
\begin{equation*}
\begin{aligned}
  \|v\|_{J_{p}^{-t}(\Lambda)}^2  \le 2\big(p^2\|v\|_{J_{l}^{-t}(\Lambda)}^2 + (1-p)^2 \|v\|_{J_{r}^{-t}(\Lambda)}^2 \big)
    &\le C_1\|v\|_{J_{l}^{-s}(\Lambda)}^2 + C_2\|v\|_{J_{r}^{-s}(\Lambda)}^2\\
   &\le C \|v\|_{J^{-s}(\Lambda)}^2.
\end{aligned}
\end{equation*}
Then, the estimate \eqref{eqn:intcoer} follows from the equality \eqref{eqn:equv}.
\end{proof}

\section{Petrov-Galerkin spectral tau method (PGS-$\tau$)}\label{sec:app:tau}
For the sake of completeness, we present in this appendix PGS-$\tau$ method.

\subsection{PGS-$\tau$ for the conservative R-L FDEs}
We first establish PGS-$\tau$ for the conservative R-L FDEs, i.e., \eqref{Intro1.1}-\eqref{IntroFDBC1.2} or \eqref{Intro1.1}-\eqref{IntroFNBC1.3}.
The PGS-$\tau$ for \eqref{Intro1.1}-\eqref{IntroFDBC1.2} or \eqref{Intro1.1}-\eqref{IntroFNBC1.3} is to find $u_{N} \in \mathcal{F}_{N}^{-\mu,-\nu}$ such that
\begin{equation}\label{RLPG:Tau}
    A_{T}^{R-L}(u_N,v) = (f,\ix^{2-\a}v)_{\omega} \quad \forall  v\in \mathcal{F}_{N-2}^{-\mu,-\nu}; \quad
    \mathcal{B}_{-}u_N(-1) = g_1, \; \mathcal{B}_{+}u_N(1) = g_2,
\end{equation}
where the bilinear form $A_T^{R-L}(\cdot,\cdot)$ is given by
\begin{equation*}
  A_T^{R-L}(u_N,v) : = -(\dx^{\alpha}u_N,\ix^{2-\a}v)_{\omega}+c(u_N,\ix^{2-\a}v)_{\omega}
\end{equation*}
and  $\mathcal{B}_{\pm}u_N(\pm1)=\ix^{2-\alpha}u_N(\pm1)$ for FDBC \eqref{IntroFDBC1.2} while $ \mathcal{B}_{\pm}u_N(\pm1)=\dx^{\alpha-1}u_N(\pm1)$ for FNBC \eqref{IntroFNBC1.3}.

By taking $u_N(x)=\sum_{k=0}^{N}\tilde{u}_{k}J_{k}^{-\mu,-\nu}$, and
letting the test functions  be $J_{i}^{-\mu,-\nu}(x)$, $0\leq i\leq N-2$,  we obtain the following linear system
\begin{equation*}
  (-\widetilde{S} + c\widetilde{M} + \widetilde{B}){U}=F,
\end{equation*}
where ${U}=(\tilde{u}_0,\tilde{u}_1,\cdots,\tilde{u}_{N})^T$, the stiffness and mass matrix $\widetilde{S}$ and $\widetilde{M}$ have the same elements as the matrix $S$ and $M$ in \eqref{RLLMS} except that the last two rows are equal to zero. The first $N-1$ rows of the matrix $\widetilde{B}$ are equal to zero, the last two rows of the matrix $\widetilde{B}$ are given by
$$\widetilde{B}_{N-1,k} = (\mathcal{B}_{-}J_{k}^{-\mu,-\nu}) (-1), \; \widetilde{B}_{N,k} = (\mathcal{B}_{+}J_{k}^{-\mu,-\nu}) (1), $$
and $F = [\widetilde{F}(0:N-1);g_1;g_2]$ where $\widetilde{F}$ is given by \eqref{RLLMS}.
%

\subsection{PGS-$\tau$ for the conservative Caputo FDEs}
The PGS-$\tau$ for the conservative Caputo FDEs, i.e., \eqref{Intro1.1}-\eqref{IntroDBC1.4} or \eqref{Intro1.1}-\eqref{IntroFNBC1.5} is to find $u_N\in \mathbb{P}_N$, such that
\begin{equation}\label{CDBCPG:Tau}
 A_T^{C}(u_N,v) = (f,v)_{\omega} \quad \forall \, v\in \mathbb{P}_{N-2}; \quad
 \mathcal{B}_{-}u_N(-1) = g_1, \; \mathcal{B}_{+}u_N(1) = g_2,
\end{equation}
where the bilinear form $A_T^{C}(\cdot,\cdot)$ is given by
\begin{equation*}
  A_T^{C}(u_N,v) : = -(D\, \dxc^{\alpha-1}u,v)_{\omega}+c(u_N,v)_{\omega}
\end{equation*}
and  $\mathcal{B}_{\pm}u_N(\pm1)=u_N(\pm1)$ for the Dirichlet BCs \eqref{IntroDBC1.4} or $\mathcal{B}_{\pm}u_N(\pm1)=\dxc^{\alpha-1}u_N(\pm1)$ for the Caputo FNBCs \eqref{IntroFNBC1.5}.

Taking $u_N(x)=\sum_{k=0}^{N}\tilde{u}_{k}L_{k}(x)$ and
letting the test functions be $L_i(x), 0\leq i\leq N-2$ gives the linear system
\begin{equation*}
  (-\widetilde{\mathcal{S}} + c\widetilde{\mathcal{M}} +\widetilde{\mathcal{B}})U=\mathcal{F},
\end{equation*}
where ${U}=(\tilde{u}_0,\tilde{u}_1,\cdots,\tilde{u}_{N})^T$, and similarly, the stiffness and mass matrix $\widetilde{\mathcal{S}}$ and $\widetilde{\mathcal{M}}$ have the same elements as the matrix $\widetilde{S}$ and $\widetilde{M}$ in \eqref{CLMS} except that the last two rows are equal to zero. The first $N-1$ rows of the matrix $\widetilde{\mathcal{B}}$ are equal to zero, the last two rows of the matrix $\widetilde{\mathcal{B}}$ are given by
$$\widetilde{\mathcal{B}}_{N-1,k} = \mathcal{B}_{-}L_{k}(-1), \; \widetilde{\mathcal{B}}_{N,k} = \mathcal{B}_{+}L_{k} (1), $$
and $\mathcal{F} = [\widetilde{\mathcal{F}}(0:N-1);g_1;g_2]$ where $\widetilde{\mathcal{F}}$ is given by \eqref{CLMS}.



\bibliographystyle{siamplain} 

\end{document}